\newtheorem{theorem}{Theorem}
\newtheorem{lemma}{Lemma}
\newtheorem{corollary}{Corollary}
\def\comment#1{}
\def\withcomments{
  \newcounter{mycommentcounter}
  \def\comment##1{\refstepcounter{mycommentcounter}%
   \ifhmode%
    \unskip%
    {\dimen1=\baselineskip \divide\dimen1 by 2 %
      \raise\dimen1\llap{\tiny -\themycommentcounter-}}\fi%
    \marginpar{\renewcommand{\baselinestretch}{0.8}%
      \footnotesize [\themycommentcounter]: \raggedright ##1}}
  }
\newcommand{\rephrase}[3]{\noindent\textbf{#1~#2.~}\emph{#3}}
\newcommand{\myproblem}[3]{\medskip\noindent\textbf{Problem:}
  #1\\\noindent Instance: #2\\\noindent Task: #3\smallskip}
\newcommand*\circled[1]{ %
  \protect\tikz[baseline=(char.base)]{ %
    \protect\node[shape=circle,draw,inner sep=0.2pt] (char) {#1};}} %
\newcommand{\0}[1]{#1^{\tiny \ensuremath{\circled{0}}}}
\newcommand{\1}[1]{#1^{\tiny \ensuremath{\circled{1}}}}
\newcommand{\m}[1]{#1^{\footnotesize \ensuremath{\circled{-}}}}
\newcommand{\p}[1]{\ensuremath{#1^\oplus}}
\newcommand{\s}[1]{#1^{\tiny \ensuremath{\circled{$\star$}}}}
\newcommand{\ii}[1]{#1^{\tiny \ensuremath{\circled{i}}}}
\newcommand{\sV}{\s{V}}
\newcommand{\sX}{\s{X}}
\newcommand{\mW}{\m{W}}
\newcommand{\pW}{\p{W}}
\newtheorem{observation}{Observation}
\newtheorem{selectionRule}{Rule}
\newtheorem{condition}{Condition}
\newcommand{\planaug}{\textsc{P}\aug}
\newcommand{\aug}{\textsc{RA}\xspace}
\newcommand{\faug}{\textsc{FE}\aug}
\newcommand{\sat}{\textsc{3Sat}\xspace}
\newcommand{\psat}{\textsc{Planar}\sat}
\newcommand{\mpsat}{\textsc{Monotone}\psat}
\newcommand{\F}{\ensuremath{\mathcal{F}}}
\newcommand{\kmax}{\ensuremath{k_{\max}}}
\newcommand{\Vin}{V_{\mathrm{in}}} 
\newcommand{\Vbound}{V_b}
\newcommand{\preA}{\widetilde{A}} 
\newcommand{\prea}{\widetilde{a}_f}
\newcommand{\predem}{\widetilde{d}_f} 
\title{Cubic Augmentation of Planar Graphs}%
\author{Tanja Hartmann\thanks{Partially supported by the DFG under grant WA 654/15 within the Priority Programme "Algorithm Engineering".}, Jonathan Rollin, Ignaz Rutter}%
\date{Karlsruhe Institute of
  Technology (KIT)\\\texttt{firstname.lastname@kit.edu}}%
\begin{document}

\maketitle

\begin{abstract}
  In this paper we study the problem of augmenting a planar graph such
  that it becomes 3-regular and remains planar.  We show that it is
  NP-hard to decide whether such an augmentation exists.  On the other
  hand, we give an efficient algorithm for the variant of the problem
  where the input graph has a fixed planar (topological) embedding
  that has to be preserved by the augmentation.  We further generalize
  this algorithm to test efficiently whether a 3-regular planar
  augmentation exists that additionally makes the input graph
  connected or biconnected. If the input graph should become even 
  triconnected, we show that the existence of a 3-regular planar augmentation is again NP-hard to decide.
\end{abstract}

\section{Introduction}
\label{sec:introduction}

An \emph{augmentation} of a graph~$G=(V,E)$ is a set~$W \subseteq E^c$
of edges of the complement graph.  The \emph{augmented graph} $G'=(V,E
\cup W)$ is denoted by~$G+W$.  We study several problems where the
task is to augment a given planar graph to be 3-regular while
preserving planarity.  The problem of augmenting a graph with the goal
that the resulting graph has some additional properties is a
well-studied problem and has applications in network
planning~\cite{et-ap-76}.  Often the goal is to increase the
connectivity of the graph while adding few edges.  Nagamochi and
Ibaraki~\cite{ni-gcaam-02} study the problem making a graph
biconnected by adding few edges.  Watanabe and
Nakamura~\cite{wn-ecap-87} give an~$O(c \min\{c,n\}n^4(cn+m))$
algorithm for minimizing the number of edges to make a graph
$c$-edge-connected.  The problem of biconnecting a graph at minimum
cost is NP-hard, even if all weights are
in~$\{1,2\}$~\cite{ni-gcaam-02}.  Motivated by graph drawing
algorithms that require biconnected input graphs, Kant and
Bodlaender~\cite{kb-pgap-91} initiated the study of augmenting the
connectivity of planar graphs, while preserving planarity.  They show
that minimizing the number of edges for the biconnected case is
NP-hard and give efficient 2-approximation algorithms for both
variants.  Rutter and Wolff~\cite{rw-acpgg-12} give a corresponding
NP-hardness result for planar 2-edge connectivity and study the
complexity of geometric augmentation problems, where the input graph
is a plane geometric graph and additional edges have to be drawn as
straight-line segments.  Abellanas et al.~\cite{aghtu-acgg-08},
T{\'o}th~\cite{t-capsl-08} and Al-Jubeh et al.~\cite{airst-ecap-09}
give upper bounds on the number of edges required to make a plane
straight-line graph $c$-connected for~$c=2,3$.
For a survey on plane geometric graph augmentation see~\cite{ht-pga-12}.

We study the problem of augmenting a graph to be 3-regular while
preserving planarity.  In doing so, we additionally seek to raise the
connectivity as much as possible.  Specifically, we study the
following problems.

\myproblem{{\sc Planar 3-Regular Augmentation} (\planaug)}{Planar graph~$G=(V,E)$}{Find an augmentation~$W$
  such that~$G+W$ is 3-regular and planar.}

\myproblem{{\sc Fixed-Embedding Planar 3-Regular Augmentation}
  (\faug)}{Planar graph~$G=(V,E)$ with a fixed planar (topological)
  embedding}{Find an augmentation~$W$ such that~$G+W$ is 3-regular,
  planar, and $W$ can be added in a planar way to the fixed embedding
  of~$G$.}

Moreover, we study \emph{$c$-connected} \faug, for $c=1,2,3$, where the goal is
to find a solution to \faug, such that the resulting graph
additionally is $c$-connected. 

\paragraph{Contribution and Outline.}

Using a modified version of an NP-hardness reduction by Rutter and
Wolff~\cite{rw-acpgg-12}, we show that \planaug is NP-hard; the proof
is postponed to Section~\ref{sec:hardness}.


\newcommand{\theoremNpc}{\planaug is NP-complete, even if the input
  graph is biconnected.}

\begin{theorem}
  \label{thm:npc}
  \theoremNpc
\end{theorem}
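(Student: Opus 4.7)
The plan is to prove NP-completeness by reducing from Monotone Planar 3-SAT (\mpsat), which is known to be NP-complete. Membership in NP is immediate: given an augmentation~$W$, one verifies in polynomial time that $G+W$ is $3$-regular and that $G+W$ is planar. For the hardness part my starting point is the reduction of Rutter and Wolff~\cite{rw-acpgg-12}, in which variables and clauses of a planar SAT formula are realized by small planar gadgets; the modification is that each gadget must be redesigned so that the binary choice it encodes is no longer whether a straight-line edge crosses, but rather which planar completion saturates all low-degree vertices to degree exactly three.

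The variable gadget I would build is a small biconnected plane subgraph whose planar $3$-regular completions come in exactly two combinatorial families, interpreted as truth values. Concretely the gadget has several degree-$2$ ``internal'' vertices whose deficit can be absorbed in only two ways, each pairing up the internal vertices according to a fixed pattern; the two ways also determine on which side a group of degree-$2$ ``literal ports'' expects its missing edge, so that the truth value propagates along the literal edges leaving the gadget. A hexagonal or laddered cycle with some chords is the natural candidate for such a gadget.

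The clause gadget is designed with three input ports so that its remaining degree-$2$ vertices admit a planar $3$-regular completion inside its face if and only if at least one of its three incident literal edges is oriented ``into'' the gadget, i.e.\ is delivered by a correctly set variable. Monotonicity helps, because all positive literals of a variable can leave on one side of its gadget and all negative ones on the other side, so positive and negative clause gadgets can be placed in the two outer regions of the standard planar layout of an \mpsat instance without crossings. Wiring the gadgets together along this layout produces the desired planar graph~$G$, and biconnectedness is enforced by enclosing the construction in a biconnected frame that attaches to each gadget without creating additional $3$-regular planar completions.

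The main obstacle will be to argue correctness of the gadgets, i.e.\ to rule out ``parasitic'' planar $3$-regular augmentations that do not correspond to any consistent truth assignment of the formula. This requires, for each gadget, a case analysis showing that the only planar completions inside its face are the intended ones, and a face-based argument that no augmenting edge can cross between gadgets (e.g.\ because every face of~$G$ other than the gadget faces already has all boundary vertices at degree~$3$, so that no edge may enter it). Once this is established, the equivalence between satisfying assignments of the \mpsat instance and $3$-regular planar augmentations of~$G$, and hence the NP-hardness of \planaug on biconnected planar inputs, follows.
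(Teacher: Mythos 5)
Your overall strategy---a gadget reduction from a planar SAT variant, explicitly modeled on the Rutter--Wolff construction, with variable gadgets admitting exactly two completion families and clause gadgets that are completable iff at least one literal delivers its deficit---is the same route the paper takes (the paper reduces from \psat rather than \mpsat, but that difference is immaterial). However, the proposal stops exactly where the actual work begins. You never exhibit the gadgets: ``a hexagonal or laddered cycle with some chords is the natural candidate'' is a guess, not a construction, and you yourself flag the correctness of the gadgets and the exclusion of parasitic augmentations as ``the main obstacle'' without resolving it. A proof of NP-hardness must contain the gadgets and the case analysis; as written, the hardness direction is a statement of intent.

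Two specific ideas that the paper relies on are absent from your plan and would need to be supplied. First, \planaug does not fix the embedding, so you must control which planar embeddings of $G_\varphi$ are possible at all; the paper does this by building a \emph{frame} that becomes 3-connected after contracting degree-2 vertices and the small cycles modeling high-degree vertices, so that by Whitney's theorem the embedding is unique except for the placement of designated \emph{free chains}---and that residual freedom is precisely what encodes the truth values and literal propagation. Your remark that ``biconnectedness is enforced by enclosing the construction in a biconnected frame'' does not address this: a biconnected planar graph can still have exponentially many embeddings, which would let parasitic augmentations through. Second, the paper's gadget correctness rests on two clean local invariants of any valid assignment of degree-2 vertices to faces: each face receives an even number of assigned vertices (parity), and any face receiving two \emph{adjacent} degree-2 vertices must receive at least four (since an adjacent pair cannot satisfy each other). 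All gadgets---the subdivided variable ladder, the literal square with its free chain, and the clause face with its adjacent subdivision pair---are engineered so that these two conditions alone force the intended behavior. Without identifying such invariants, the per-gadget case analysis you defer has no handle to grip.
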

%
%
Our main result is an efficient algorithm for \faug and $c$-connected
\faug for~$c=1,2$.  We note that Pilz~\cite{p-acg-12} has
simultaneously and independently studied the planar 3-regular
augmentation problem. He showed that it is NP-hard and posed the question on 
the complexity if the embedding is fixed.  Our hardness proof strengthens his result
(to biconnected input graphs) and our algorithmic results answer his
open question.
We further prove that for $c=3$ $c$-connected $\faug$ is again NP-hard.

We introduce basic notions used throughout the paper in
Section~\ref{sec:preliminaries}.  We present our results on \faug
in Section~\ref{sec:planar-fixed}.  The problem is
equivalent to finding a \emph{node assignment} that assigns the
vertices with degree less than~3 to the faces of the graph, such that
for each face~$f$ an augmentation exists that can be embedded in~$f$
in a planar way and raises the degrees of all its assigned vertices
to~$3$.  We completely characterize these assignments and show that
their existence can be tested efficiently.  We strengthen our
characterizations to the case where the graph should become
$c$-connected for~$c=1,2$ in Section~\ref{sec:planar-fixed-conn} and
show that our algorithm can be extended to incorporate these
constraints.  
In Section~\ref{sec:tricon} and Section~\ref{sec:hardness} we provide the hardness proofs
for $3$-connected $\faug$ and $\planaug$.

\section{Preliminaries}
\label{sec:preliminaries}

A graph~$G=(V,E)$ is \emph{3-regular} if all vertices have degree~3.
It is a \emph{maxdeg-3 graph} if all vertices have at most degree~3.
For a vertex set~$V$, we denote by~$\0V, \1V$ and $\2V$ the set of
vertices with degree~0,1 and 2, respectively.  For convenience, we
use~$\sV = \0V \cup \1V \cup \2V$ to denote the set of vertices with
degree less than~3.  Clearly, an augmentation~$W$ such that~$G+W$ is
3-regular must contain~$3-i$ edges incident to a vertex in $\ii{V}$.  We
say that a vertex~$v \in \ii V$ has~$3-i$ \emph{(free) valencies} and
that an edge of an augmentation incident to~$v$ \emph{satisfies} a
valency of~$v$.  Two valencies are adjacent if their vertices are
adjacent.

Recall that a graph~$G$ is \emph{connected} if it contains a
path between any pair of vertices, and it
is~\emph{$c$-(edge)-connected} if it is connected and removing any
set of at most~$c-1$ vertices (edges) leaves $G$ connected.  A
2-connected graph is also called \emph{biconnected}.  We note that the
notions of $c$-connectivity and $c$-edge-connectivity coincide on
maxdeg-3 graphs.  Hence a maxdeg-3 graph is biconnected if and only if
it is connected and does not contain a \emph{bridge}, i.e., an edge
whose removal disconnects the graph.

A graph is \emph{planar} if it admits a \emph{planar embedding} into
the Euclidean plane, where each vertex (edge) is mapped to a distinct
point (Jordan curve between its endpoints) such that curves
representing distinct edges do not cross.  A planar embedding of a
graph subdivides the Euclidean plane into \emph{faces}.  When we seek
a planar augmentation preserving a fixed embedding, we require that
the additional edges can be embedded into these faces in a planar way.

\section{Planar 3-Regular Augmentation with Fixed Embedding}
\label{sec:planar-fixed}

In this section we study the problem \faug of deciding for a
graph~$G=(V,E)$ with fixed planar embedding, whether there
exists an augmentation~$W$ such that~$G+W$ is 3-regular and the
edges in~$W$ can be embedded into the faces of $G$ in a planar way.

An augmentation~$W$ is \emph{valid} only if the endpoints
of each edge in~$W$ share a common face in~$G$.  We assume that a
valid augmentation is associated with a (not necessarily planar)
embedding of its edges into the faces of~$G$ such that each edge is
embedded into a face shared by its endpoint.  A valid augmentation is
\emph{planar} if the edges can be further embedded in a planar way
into the faces of~$G$.  

Let $F$ denote the set of faces of $G$ and recall that~$\sV$ is the
set of vertices with free valencies.  A \emph{node assignment} is a
mapping $A \colon \sV \to F$ such that each $v\in \sV$ is incident to
$A(v)$.  Each valid 3-regular augmentation $W$ induces a node
assignment by assigning each vertex $v$ to the face where 
its incident edges in $W$ are embedded: this is well-defined  
since vertices in $\0V\cup \1V$ are incident to a single face.
A node assignment is
\emph{realizable} if there exists a valid augmentation that induces
it.  It is \emph{realizable in a planar way} if it is induced by some
planar augmentation.  We also call the corresponding augmentation a
\emph{realization}.
A realizable node assignment can be found 
efficiently by computing a matching in the subgraph of~$G^c$ that
contains edges only between vertices that share a common face.  The
existence of such a matching is a necessary condition for the
existence of a planar realization. The main result of this section
is that this condition is also sufficient.

Both valid augmentations and node assignments are local by nature, and
can be considered independently for distinct faces.  Let~$A$ be a node
assignment and let~$f$ be a face.  We denote by~$V_f$ the vertices
that are assigned to~$f$.  We say that $A$ is \emph{realizable
  for $f$} if there exists an augmentation~$W_f \subseteq \binom{V_f}{2}$ such that in~$G+W_f$ all vertices of~$V_f$ have degree~3.  It is \emph{realizable for~$f$ in a planar way} if additionally $W_f$ can be embedded
in~$f$ without crossings.  We call the corresponding augmentations
\emph{(planar) realizations for~$f$}.  
The following lemma is obtained by glueing (planar) realizations for all faces.

\newcommand{\lemmaGlobalLocal}{A node assignment is realizable (in a
  planar way) for a graph $G$ if and only if it is realizable (in a
  planar way) for each face $f$ of $G$.}

\begin{lemma}
  \label{lem:global-local}
  \lemmaGlobalLocal
\end{lemma}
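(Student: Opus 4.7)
The plan is to prove both equivalences (for realizable, and for realizable in a planar way) in one go, since they differ only by the extra crossing-free condition, which is preserved under the gluing operation I will use. The whole point of a node assignment is that it partitions the augmentation problem into independent per-face subproblems, so the proof should be a clean decomposition/composition argument.

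For the forward direction, I would start from a (planar) realization $W$ of the node assignment $A$ for $G$. By definition of being associated with an embedding into the faces, every edge $uv\in W$ lies in some face $f$ shared by $u$ and $v$, and the vertex $v$ has all edges of $W$ incident to it embedded in $A(v)$. Hence $uv\in W$ implies $A(u)=A(v)=f$. Setting $W_f := W \cap \binom{V_f}{2}$ therefore partitions $W$ by face, and each vertex in $V_f$ has all its free valencies satisfied by edges of $W_f$. That is exactly the definition of $W_f$ being a realization for $f$, and the embedding inherited from $W$ inside $f$ is crossing-free if $W$ was.

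For the backward direction, given a (planar) realization $W_f$ for each face $f$, I would take $W := \bigcup_{f\in F} W_f$. The families $\{W_f\}_f$ are automatically edge-disjoint, because any edge in $W_f$ has both endpoints in $V_f$, and a vertex lies in a unique $V_{A(v)}$. Consequently every free valency of every $v\in\sV$ is satisfied exactly once (by edges in $W_{A(v)}$), so $G+W$ is $3$-regular, and placing each edge of $W_f$ inside the face $f$ gives a valid embedding consistent with $A$.

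The only step that requires a moment of care, rather than being purely a bookkeeping exercise, is verifying planarity after gluing. I would argue that the open faces of $G$ are pairwise disjoint regions of the plane, so edges drawn strictly inside two distinct faces cannot meet in the interior; the only possible shared points are vertices on the common boundary, but those are endpoints and are already handled by the individual planar realizations inside each face. I do not expect this to be a real obstacle, only a sentence or two to make the gluing rigorous; the substance of the lemma is the clean locality of node assignments which is built into the definitions.
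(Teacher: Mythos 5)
Your proposal is correct and follows essentially the same decomposition/composition argument as the paper: partition $W$ by face for the forward direction and take the union of the per-face realizations for the converse, with planarity preserved because distinct faces are disjoint regions. The extra care you take to justify that $uv\in W$ forces $A(u)=A(v)$ and that the glued embedding is crossing-free is sound but does not change the route.
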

\begin{proof}
  Consider a node assignment $A$.  If $A$ is realizable (in a planar way), there exists a corresponding valid (planar) augmentation~$W$.  Then for each face~$f$ the set~$W_f \subseteq W$ of edges embedded in~$f$ forms a (planar) realization for~$f$.  Conversely, assume that~$A$ is realizable (in a planar way) for each face~$f$.  Then for each face~$f$ there is a corresponding (planar) realization~$W_f$ of~$A$ for~$f$.  Hence $W :=\bigcup_{f\in F} W_f $ is a valid (planar) augmentation that realizes $A$.
\end{proof}

Note that a node assignment induces a unique corresponding assignment
of free valencies, and we also refer to the node assignment
as assigning free valencies to faces.
In the spirit of the notation $G + W$ we use $f + W_f$ to denote the
graph $G+W_f$, where the edges in~$W_f$ are embedded into the
face~$f$.  If $W_f$ consists of a single edge $e$, we write $f+e$.
For a fixed node assignment $A$ we sometimes consider an augmentation
$W_f$ that realizes $A$ for~$f$ only in parts by allowing that some
vertices assigned to~$f$ have still a degree less than~3 in $f + W_f$.
We then seek an augmentation~$W_f'$ such that~$W_f \cup W_f'$ forms a
realization of~$A$ for~$f$.  We interpret~$A$ as a node assignment
for~$f+W_f$ that assigns to~$f$ all vertices that were originally
assigned to $f$ by $A$ and do not yet have degree~3 in~$f+W_f$.
Observe that in doing so, we still assign to the faces of~$G$ but when
considering free valencies and adjacencies, we consider~$G+W_f$.

\subsection{ (Planarly) Realizable Assignments for a Face}

Throughout this section we consider an embedded graph $G$ together with a
fixed node assignment $A$ and a fixed face $f$ of $G$.  The goal of
this section is to characterize when $A$ is realizable (in a planar
way) for $f$.  We first collect some necessary conditions for a
realizable assignment.  

\begin{condition}[parity] The number of free valencies
  assigned to~$f$ is even.
\end{condition}

Furthermore, we list certain \emph{indicator sets} of vertices
assigned to $f$ that demand additional valencies outside the set to
which they can be matched, as otherwise an augmentation is impossible.
Note that these sets may overlap.

\vspace{1ex}
\begin{compactenum}[(1)]
\item {\bf Joker:} A vertex in $\2V$ whose neighbors are not assigned
  to~$f$ demands \emph{one} valency.\label{prop:deg2}
\item {\bf Pair:} Two adjacent vertices in $\2V$ demand \emph{two}
   valencies. \label{prop:adj_deg2}
\item {\bf Leaf:} A vertex in $\1V$ whose neighbor has degree 3
  demands \emph{two} valencies from two \emph{distinct
    vertices}. \label{prop:deg1}
\item {\bf Branch:} A vertex in $\1V$ and an adjacent vertex in $\2V$
demand \emph{three} valencies from at least
  \emph{two distinct vertices} with at most one valency
  adjacent to the vertex in $\2V$.\label{prop:deg12}
\item {\bf Island:} A vertex in $\0V$ demands \emph{three} valencies
  from \emph{distinct vertices}. \label{prop:deg0}
\item {\bf Stick:} Two adjacent vertices of degree 1 demand
  \emph{four} valencies of which \emph{at most two} belong to the
  \emph{same vertex}. \label{prop:adj_deg1}
\item Two vertices in~$\0V$ demand \emph{four} valencies; {at most
    two} from the \emph{same vertex}.\label{prop:adj_deg0}
\item {\bf 3-cycle:} A cycle of three vertices in $\2V$ demands
  \emph{three} valencies. \label{prop:3cycle}
\end{compactenum}

\begin{condition}[matching]
  The demands of all indicator sets formed by vertices assigned to~$f$
  are satisfied.
\end{condition}

Each indicator set contains at most three vertices and provides at
least the number of valencies it demands; only sets of type
(\ref{prop:adj_deg0}) provide more.  The demand of a joker is
implicitly satisfied by the parity condition.  We call an indicator
set with maximum demand \emph{maximum indicator set}, and we denote
its demand by $\kmax$.  Note that $\kmax \leq 4$.  We observe that
inserting edges does not increase $\kmax$.
%
\begin{observation}
  \label{obs:kmax-increase}
  Inserting an edge $uv$ into~$f$ does not increase~$\kmax$.
\end{observation}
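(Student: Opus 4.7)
The plan is a case analysis on the pair of degrees $d_u, d_v \in \{0,1,2\}$ of $u$ and $v$ in $G$; both vertices must lie in $V_f$, since otherwise the edge $uv$ cannot be inserted as part of an augmentation embedded in $f$.

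First, I would note that any indicator set in $G+uv$ that avoids both $u$ and $v$ remains an indicator set of $G$ with the same demand: the degrees of vertices outside $\{u,v\}$, and the adjacencies among them, are unaffected by the insertion. Therefore it suffices to show that every indicator set of $G+uv$ containing $u$ or $v$ has demand at most the value of $\kmax$ in $G$.

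The key structural fact I would exploit is that raising the degree of a single vertex only weakens the indicator sets in which it participates: an island turns into a leaf, a leaf into a joker or a pair-member, and a pair-member leaves $\sV$ entirely. Accordingly, the only genuinely new indicator sets in $G+uv$ are the two-vertex set $\{u,v\}$ (a stick of demand $4$ if $d_u=d_v=0$, a branch of demand $3$ if $\{d_u,d_v\}=\{0,1\}$, or a pair of demand $2$ if $d_u=d_v=1$), a $3$-cycle $\{u,v,w\}$ of demand $3$ formed with a common neighbor $w \in \2V \cap V_f$ when $d_u=d_v=1$, and singletons or pairs around $u$ alone that become newly relevant once $v$ leaves $V_f$ in the cases $d_v = 2$.

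For each new indicator set I would then exhibit a witness of at least the same demand in $G$: the stick is matched by the type-$(7)$ pair of islands $\{u,v\}$ of demand $4$ in $G$; the branch is matched by the island on the degree-$0$ endpoint; the new pair or $3$-cycle is matched either by a leaf on $\{u\}$ (when $u$'s original neighbor has degree $3$) or by a branch/stick $\{u,w_u\}$ of demand at least $3$ (when that neighbor lies in $V_f$); and the new leaf or joker on $\{u\}$ after $v$ reaches degree $3$ is dominated by the island or branch containing $u$ in $G$. The hardest step will be the $(1,1)$ case: I have to argue that whenever the new pair $\{u,v\}$ of demand $2$ appears, the structure of $u$'s and $v$'s neighborhoods in $V_f$ already forces a $G$-side indicator set of demand at least $2$. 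The remaining subcases ($d_v = 2$, or $d_u=d_v=2$) only reduce the contribution of $u$ and $v$ to $\kmax$ and therefore cannot cause an increase.
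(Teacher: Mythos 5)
Your underlying strategy --- trace every indicator set of $G+uv$ back to an indicator set of $G$ of at least the same demand --- is exactly the paper's; the paper merely organizes the case analysis by the new value of $\kmax$ ($4$, $3$, or $2$) rather than by the degrees of $u$ and $v$. However, your opening reduction is false as stated: an indicator set of $G+uv$ that avoids both $u$ and $v$ need \emph{not} have been an indicator set of $G$. The membership condition of a leaf refers to the degree of the vertex's neighbor, and that neighbor may be $u$ or $v$. Concretely, if $d_v=2$ and $v$ has a degree-$1$ neighbor $x\notin\{u,v\}$ (which is then necessarily assigned to $f$), the singleton $\{x\}$ is a leaf of demand $2$ in $G+uv$ but was no indicator set of $G$ at all; since it contains neither $u$ nor $v$, your case analysis never examines it. The observation survives because $\{x,v\}$ was a branch of demand $3$ in $G$, but this case has to be added explicitly rather than dismissed by the first reduction. (The analogous effect on jokers, whose membership depends on the assignment of their neighbors, is harmless since their demand is $1$ and the vertex was part of a pair or branch before.)

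The second issue is the step you yourself flag as outstanding in the $(1,1)$ case: if $u$'s original neighbor $w_u$ has degree $2$, the branch $\{u,w_u\}$ only counts as an indicator set for $f$ when $w_u$ is assigned to $f$, which you have not established. This can be closed: since $u$ has degree $1$, the edge $uw_u$ is a bridge and hence bounds only one face, and since $w_u$ has degree at most $2$, every face incident to $w_u$ is incident to that edge; so $w_u$ is incident to no face other than the unique face of $u$, namely $f$, and must be assigned there. (If $w_u$ has degree $1$ the same argument yields a stick of demand $4$.) With these two repairs your argument is correct and coincides in substance with the paper's proof, which is likewise a reverse-engineering of where each type of indicator set can newly arise.
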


\begin{proof}
  Let $k$ and $k'$ denote $\kmax$ before and after the insertion of $uv$, respectively. We show $k' \leq k$.
  If $k' =4$, then after the insertion there is a stick or an indicator set of type~\ref{prop:adj_deg0}. 
  Since a stick can only be obtained from a set of type~\ref{prop:adj_deg0}, we have       $k = 4$. 
  If $k' = 3$, then after the insertion there is a branch or an island. 
  Since a branch can only be obtained from an island or a stick, we have $k \geq 3$.
  If $k' = 2$, then after the insertion there is a pair or a leaf. 
  Since a pair can only be obtained from two leaves, we have $k \geq 2$.
\end{proof}
The following
lemma reveals the special role of maximum indicator sets.

\newcommand{\lemmaMaxIndicator}{Let~$S$ be a maximum indicator set
  in~$f$. Then~$A$ satisfies the matching condition for~$f$ if and
  only if the demand of~$S$ is satisfied.}

\begin{lemma}
  \label{lem:max-indicator}
  \lemmaMaxIndicator
\end{lemma}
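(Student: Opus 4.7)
The forward direction is immediate: if every indicator set assigned to~$f$ has its demand satisfied, then in particular so does~$S$. For the converse, the plan is to assume that $S$ is a maximum indicator set whose demand is satisfied and to verify, for every other indicator set~$T$ assigned to~$f$, that $T$'s demand is satisfied as well. I would proceed by case analysis on $\kmax \in \{1,2,3,4\}$, and within each case on the type of~$T$. The key numerical observation is that every indicator set~$X$ contains at least $k_X$ free valencies, as one sees by inspection of the eight listed types, so the hypothesis that $S$'s demand of $\kmax$ external valencies is met already yields the global lower bound $N \geq \kmax + (\text{free valencies inside }S)$ on the total number~$N$ of free valencies assigned to~$f$.

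Within each subcase, I would first check the purely numerical part of $T$'s demand, namely that enough free valencies lie outside~$T$. Since $k_T \leq \kmax$ by maximality of~$S$, this typically follows at once from the lower bound on~$N$ together with the fact that $S$ and $T$ are forced to overlap only slightly: their prescribed degree structures restrict the vertices they may contain (for instance, a stick lies entirely in~$\1V$ while a pair of vertices in~$\0V$ lies in~$\0V$, so the two sets are necessarily vertex-disjoint). Having accounted for the count, I would then exhibit an explicit collection of external valencies for~$T$ drawn from the vertices of~$S$ and, if needed, from the additional valencies outside~$S$ that the hypothesis guarantees.

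The main obstacle is verifying the structural side-conditions of the various demands: the ``from distinct vertices'' clauses of leaves and islands, the ``at most two from the same vertex'' clauses of sticks and pairs of $\0V$-vertices, and the ``at most one adjacent to the degree-$2$ vertex'' clause of branches. Here I would exploit that whenever $S$ consists of more than one vertex, its internal free valencies are spread over at least two distinct vertices with a rigidly described adjacency pattern, so these side-conditions can always be met by a careful choice of which valencies of~$S$ feed into the demand of~$T$. The most delicate subcase is $\kmax = 4$ with $S$ a stick and $T$ a pair of vertices in~$\0V$ (or vice versa), where the arithmetic bound becomes essentially tight; there I would argue directly that the four valencies at the two degree-$1$ endpoints of a stick automatically satisfy the at-most-two-per-vertex constraint of a pair of vertices in~$\0V$, and that, conversely, the six valencies of such a pair amply cover the demand of a stick.
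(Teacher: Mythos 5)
Your overall strategy coincides with the paper's: the forward direction is trivial, and the converse is a case analysis on $\kmax$ and on the type of the second indicator set~$T$, satisfying $T$'s demand from the valencies internal to~$S$ together with the external valencies that the hypothesis on~$S$ guarantees. The outline is sound, but you have misplaced the delicacy, and that matters for whether the plan gets executed correctly. The case you single out as hardest --- $S$ a stick and $T$ a pair of isolated degree-0 vertices --- is in fact one of the comfortable, disjoint ones: a stick hands $T$ exactly four valencies split two-and-two over its two endpoints, and two isolated vertices hand a stick six valencies over two vertices, so nothing is tight there. The cases that genuinely require care are the \emph{overlapping} ones, which your sketch only gestures at with ``a careful choice of which valencies of~$S$ feed into~$T$'': two sets each consisting of two isolated degree-0 vertices that share a vertex (here $S \setminus T$ is a single vertex contributing only two of the four valencies $T$ needs, and one must argue that the at-most-two-per-vertex cap limits what $T \setminus S$ can contribute to $S$'s demand, forcing at least two further valencies at vertices outside $S \cup T$); two branches sharing their common degree-2 vertex; and two pairs of adjacent degree-2 vertices sharing a vertex. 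In these configurations the arithmetic is tight and the correct argument is a symmetry/exchange one --- whatever external valencies satisfied $S$'s side conditions also satisfy $T$'s --- rather than a selection of valencies from inside~$S$. If you enumerate and check these overlapping configurations explicitly, your plan becomes essentially the paper's proof.
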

\begin{proof}
  Clearly, if~$A$ satisfies the matching condition than in particular
  the demand of~$S$ is satisfied.  Hence, assume that the demand
  of~$S$ is satisfied.  We prove that for any indicator set~$U$ of
  vertices assigned to~$f$ the demands are satisfied.  Observe that
  the demand of an indicator set that is contained in~$S$ is trivially
  satisfied, we may thus assume that~$U$ contains vertices outside
  of~$S$.  We distinguish cases based on the demand~$\kmax$ of~$S$.

  \textbf{Case I: $\kmax = 4$.}  Then $S$ consists either of a stick
  or a set of type (\ref{prop:adj_deg0}), which is a pair of isolated
  vertices.  Let~$U$ be any indicator set distinct from~$S$.  Assume
  that~$U$ demands four vertices.  If~$U$ is disjoint from~$S$, then
  $S$ provides the demanded valencies.  Otherwise, both~$S$ and~$U$
  consist of a pair of isolated vertices, and they share a common
  vertex.  Since the demand of~$S$ is satisfied, there are at least
  two more assigned valencies provided by vertices outside of~$S \cup
  U$.  Together with~$S \setminus U$, they provide the demanded
  valencies for~$U$.  The same argument applies if~$U$ consists of an
  island, and hence demands three valencies.

  If~$U$ demands three or fewer valencies and it is not an isolated
  vertex, then it is either contained in~$S$ or disjoint from it.  In
  the former case its demand is satisfied, in the latter case the
  demand is satisfied by~$S$ since an island, which is contained in $S$, is the only
  indicator set demanding valencies from three different vertices.

  \textbf{Case II: $\kmax = 3$.} Then~$S$ consists either of a 3-cycle,
   an island, or a branch.  If~$S$ is a 3-cycle, then any
  other indicator set is either completely contained in~$S$ or
  disjoint from it, and it hence provides the necessary valencies
  (even for an isolated vertex).

  If~$S$ consists of an island~$s$, observe that~$\kmax=3$ implies
  that there is no other island assigned to~$f$.  The island~$s$
  provides the necessary valencies for all indicator sets, except for
  a branch or a leaf. Assume that~$U$ is a branch.  Since~$s$ demands
  three valencies from distinct vertices, there is a vertex~$v \notin
  U \cup \{s\}$ assigned to~$f$.  Together~$s$ and~$v$ provide the
  valencies for~$U$. The case that $U$ is a leaf can be treated analogously.

  Finally, consider the case that~$S$ consists of a branch.  If~$U$
  consists of an island~$u$, then there must be a vertex~$v \notin S
  \cup \{u\}$ providing a valency.  Then~$S \cup \{v\}$ provide the
  demanded valencies for~$u$.  If~$U$ is not an island, it demands at
  most three valencies from at most two different vertices.  Hence,
  if~$U$ is disjoint from~$S$, $S$ provides the demanded valencies
  for~$U$.  It remains to deal with the case that~$U$ is a branch
  sharing its degree-2 vertex with~$S$.  But then the situation
  for~$U$ and~$S$ is completely symmetric, and the demands for~$U$ are
  satisfied.

  \textbf{Case III: $\kmax = 2$.} Since the demands of jokers are
  always satisfied due to the parity condition, in this case all
  indicator sets consists either of pairs or of leaves.  If~$S$
  and~$U$ are both leaves, their situation is again completely
  symmetric.  If~$S$ and~$U$ are a leaf and a pair, respectively, they
  mutually satisfy their demands.  It remains to deal with the case
  that~$S$ and~$U$ both consist of pairs.  If~$S$ and~$U$ are
  disjoint, they mutually satisfy their demands.  If they share a
  vertex~$S$ and~$U$ are again completely symmetric.
\end{proof}
%
\newcounter{helperlemma}
\setcounter{helperlemma}{\thelemma}
\addtocounter{lemma}{1}

The necessity of the parity and the matching condition is obvious; we
prove that they are also sufficient for a node assignment to be
realizable for $f$.

\newcommand{\theoremCharacMatch}{$A$ is realizable for $f$
  $\Leftrightarrow$ $A$ satisfies the parity and matching condition
  for~$f$.}

\begin{theorem}
  \label{thm:characMatch}
  \theoremCharacMatch
\end{theorem}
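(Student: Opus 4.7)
The forward direction is immediate: a realization $W_f$ for $f$ contributes $2\lvert W_f\rvert$ satisfied valencies (hence the parity condition holds), and the edges of $W_f$ incident to any indicator set $U$ simultaneously witness $U$'s demand and its distinctness clauses (hence the matching condition holds).

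For the reverse direction, I would argue by induction on the total number of free valencies $n_f := \sum_{v\in V_f}(3-\deg v)$ assigned to $f$. When $n_f = 0$ the empty augmentation suffices; by the parity condition, otherwise $n_f \geq 2$. The inductive step reduces to producing a single augmentation edge $e = uv$, with $u,v$ distinct, both assigned to $f$, and non-adjacent in the current graph, such that both conditions continue to hold in $f + e$. Applying the induction hypothesis to $f+e$ then yields an augmentation $W_f'$, and $W_f := W_f' \cup \{e\}$ realizes $A$ for $f$.

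To locate such an edge $e$ I would select a maximum indicator set $S$ of $f$; this exists because $n_f \geq 2$. Parity is preserved by any single edge insertion. For the matching condition, Observation~\ref{obs:kmax-increase} guarantees that $\kmax$ does not grow in $f+e$, and Lemma~\ref{lem:max-indicator} reduces the verification of the matching condition in $f+e$ to checking the demand of just one (new) maximum indicator set $S'$. The plan is therefore a case analysis on the type of $S$: in each case I would pick a concrete edge $e$ with $u \in S$ such that either (a) $S$ transforms into a strictly smaller indicator set whose lowered demand is immediately met by the spare capacity promised by the original demand of $S$, or (b) $S$ dissolves entirely, in which case the new maximum indicator set $S'$ either pre-existed outside $S$ (so its demand was already met) or is freshly built around the reduced-degree endpoint $v$ (and can be matched using the remaining assigned vertices). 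For example, if $S$ is a stick $u_1u_2$, the ``at most two valencies from the same vertex'' clause locates a vertex $v \in V_f \setminus S$; adding $u_1 v$ turns $S$ into a branch whose residual demand is met by the same argument applied to $S$ originally.

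The main obstacle is bookkeeping in the case analysis. Adding $e$ metamorphoses indicator sets in predictable but numerous ways (a leaf becomes a joker, a branch becomes a pair, a stick becomes a branch, an island becomes a leaf, an isolated-vertex pair becomes a stick), and the distinctness clauses in the demands of islands, leaves, branches, sticks, and isolated-vertex pairs must be respected when routing $e$. The key lever that keeps this manageable is Lemma~\ref{lem:max-indicator}, which restricts attention to one new maximum indicator set, paired with Observation~\ref{obs:kmax-increase}, which caps its type; together they ensure that at each step only finitely many subcases need to be inspected.
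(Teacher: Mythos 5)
Your overall strategy --- induct on the number of free valencies and show that a single well-chosen edge preserves the parity and matching conditions --- is genuinely different from the paper's. The paper does \emph{not} prove Theorem~\ref{thm:characMatch} by such a greedy induction: for seven or more assigned vertices it first builds a partial augmentation $W_1$ that reduces every remaining assigned vertex to degree~2 while keeping at least six of them, and then invokes Dirac's theorem on the complement graph to finish with a Hamiltonian cycle; assignments with at most six vertices are dispatched separately (Lemma~\ref{lem:matching6}) by exhaustive enumeration with explicit augmentations. The greedy single-edge step you describe is essentially the paper's Rule~1 together with Lemma~\ref{lem:R1} --- but note that the paper's proof of Lemma~\ref{lem:R1} \emph{begins} by invoking Theorem~\ref{thm:characMatch} to obtain a realization $W_f$, and then argues by locally rewiring $W_f$. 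You cannot import that argument without circularity: your inductive step must establish, purely from the demands of the indicator sets, that some edge $e=uv$ with $u$ in a maximum indicator set preserves the matching condition.

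That inductive step is precisely where your proposal has a genuine gap. You state the plan of a case analysis and then acknowledge that the bookkeeping is the obstacle, but the entire difficulty of the theorem lives in that bookkeeping: one must verify, for every type of maximum indicator set $S$ and every way the chosen $v$ can sit inside other indicator sets, that the residual demands are still met --- including cases where the new maximum indicator set straddles $S$ and its complement (e.g.\ when $S$ is a branch whose degree-2 vertex forms a pair with an outside vertex), and cases where inserting $e$ \emph{creates} a new indicator set around $u$ or $v$ (two degree-1 vertices becoming a pair, a degree-0 vertex attached to a degree-2 vertex becoming a branch, etc.). Moreover, the choice of $v$ cannot always be arbitrary: the demands only guarantee that suitable valencies \emph{exist} outside $S$, and a careless $v$ can consume the last valency that another tight indicator set was relying on; this is exactly why the paper's small-vertex cases ($|V_f|\le 6$, where demands are tight) required a separate, fully enumerated lemma. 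Until you carry out this case analysis --- or supply a uniform argument replacing it, as the paper does with Dirac's theorem for the large case --- the reverse direction is not established.
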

The following proof of Theorem~\ref{thm:characMatch} postpones
the case that $A$ assigns less than seven vertices to~$f$ to
Lemma~\ref{lem:matching6}, which handles this by a case distinction.
%
\begin{proof}
  If~$A$ assigns less than seven vertices to~$f$, the statement
  follows from Lemma~\ref{lem:matching6}.  Moreover, the parity
  condition and the matching condition are necessary.  In the
  following we assume that~$A$ assigns at least seven vertices to~$f$
  and satisfies the parity condition and the matching condition
  for~$f$.
  Suppose there exists a partial augmentation $W_1$ of $f$ such that
  $A$ still assigns $k \ge 6$ vertices to $f + W_1$ and each assigned
  vertex has degree 2.  We define the graph $H^c$ that consists of the
  vertices assigned to $f + W_1$ and contains an edge between two
  vertices if and only if they are not adjacent in $f + W_1$.  Since
  each assigned vertex in $f+W_1$ has degree 2, it has at most two
  adjacencies in $f+W_1$ and at least $k-1-2 = k-3 \geq k/2$ (for
  $k\geq6$) adjacencies in $H^c$. Thus, by a theorem of
  Dirac~\cite{d-stag-52}, a Hamiltonian cycle exists in $H^c$, which
  induces a perfect matching $W_2$ of the degree-2 vertices in $f
  +W_1$.  Hence $W_1 \cup W_2$ is a 3-regular augmentation for $f$.
  In the remainder of this proof we show that such a partial
  augmentation $W_1$ always exists.

  We begin with the following observation. Let $S$ denote an island or
  a stick and let $e$ denote an edge between two valencies in $f$.
  Splitting $e$ and connecting the resulting half-edges to the vertex,
  respectively the vertices, in $S$ yields an augmentation
  $\{e_1,e_2\}$ such that the vertices in $S$ have degree~2 in $f +
  \{e_1,e_2\}$.  We refer to this procedure as \emph{clipping in} $S$.

  In the following we construct a partial augmentation $W_1$ for all
  possible assignments for~$f$.  In order to identify pairs of
  degree-0 vertices with sticks, in a first step we arbitarily choose
  pairs of degree-0 vertices and connect them by an edge.  Note, that
  this in particular means that there remains at most one island
  assigned to $f$.  Then we distinguish the possible assignments by
  the number of degree-1 vertices that are involved in a leaf or a
  branch.  We denote the set of these vertices by $\1X$.  Since the
  vertices in $\1X$ are mutually non-adjacent, each edge between two
  of these vertices may occur in $W_1$.

  For $|\1X| > 1$ we hence connect the vertices in $\1X$ pairwise and
  clip in possibly existing sticks and islands according to the
  observation above.  If $|\1X|$ is odd there remains one vertex $x
  \in \1X$.  However, the augmentation constructed so far contains at
  least one edge, which we split.  Then we connect the resulting
  half-edges to $x$.  Thus, $x$ becomes a degree-3 vertex and is no
  longer assigned.  Nevertheless, the condition of six assigned
  vertices in $f+W_1$ is still satisfied since there were at least
  seven vertices assigned to $f$.

  If $|\1X| = 1$, let $x$ denote the unique vertex in $\1X$.  As $A$
  satisfies the matching condition for $f$, there is at least one
  vertex $u$ outside the indicator set of $x$ to which we can connect
  $x$.  Thus, $x$ becomes a degree-2 vertex.  If $u$ becomes a
  degree-3 vertex, it is no longer assigned. However, according to the
  same argument as before, this is no problem.
  If $u$ was a vertex in a stick or an island, connecting $x$ to $u$
  yields a new degree-1 vertex in $\1X$ replacing $x$.  In this case,
  we repeat the procedure above until no new degree-1 vertex comes up
  in $\1X$.  The resulting matching contains at least one edge and we
  clip in sticks and islands.

  If $|\1X| = 0$, there exist no leaves and no branches.  The only
  vertices whose degrees need to be increased by $W_1$ are those in
  sticks or islands. All other assigned vertices have degree 2.  Let
  $n'$ denote the number of assigned islands and sticks.  If $n' = 1$,
  there are at least six, respectively seven, further degree-2
  vertices assigned since in total A assigns at least seven vertices
  to $f$ and satisfies the parity condition.  In both cases this
  yields at least eight assigned vertices.  We can hence connect the
  stick or the island with two degree-2 vertices, still having at
  least six assigned vertices in $f + W_1$.  If $n' \geq 2$ we connect
  two arbitrary indicator sets, which are either two sticks or a stick
  and an island, in the obvious way such that each vertex has degree 2
  afterwards.  All further sticks or islands are then clipped in.
  Hence in each case we find a corresponding partial
  augmentation~$W_1$, which concludes the proof.
\end{proof}

\newcounter{oldlemma}
\setcounter{oldlemma}{\thelemma}
\setcounter{lemma}{\value{helperlemma}}
\begin{lemma}
  \label{lem:matching6}
  Let  $A$ assign less that seven vertices to $f$.
  $A$ is realizable for~$f$
  $\Leftrightarrow$~$A$ satisfies the parity and matching condition
  for~$f$.
\end{lemma}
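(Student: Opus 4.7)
The plan is to dispatch the necessity in one line, since both conditions hold by construction for any realization for $f$, and then to handle the sufficiency by a finite case analysis enabled by the bound $|V_f| < 7$.

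As a first step of the sufficiency argument, I would invoke Lemma~\ref{lem:max-indicator} to reduce the matching condition to satisfying the demand of a single maximum indicator set~$S$, and then split into cases according to $\kmax$. In the case $\kmax = 1$ every assigned vertex is an isolated joker; by parity their number is even and an arbitrary pairwise matching, drawn as chords of $f$, realizes $A$. In the case $\kmax = 2$ the indicator sets are pairs, leaves, and jokers; I would pair each leaf with its two distinct external providers (guaranteed by the matching condition), handle each adjacent pair by inserting its internal edge plus two external connections, and match leftover jokers pairwise. In the case $\kmax = 3$ the set $S$ is an island, a branch, or a $3$-cycle; in each such case I would route the external valencies mandated by $S$ to bring its vertices to degree $3$, leaving a residual with strictly smaller $\kmax$ on which the construction recurses. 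Finally, in the case $\kmax = 4$, where $S$ is a stick or a pair of isolated vertices, a clipping-in step (in the sense foreshadowed by the proof of Theorem~\ref{thm:characMatch}) again reduces the situation to a smaller subproblem.

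The hard part will be a handful of very tight subcases, typically with $|V_f| \in \{3,4\}$, where the ``distinct vertices'' clause for leaves, branches, and islands, or the ``at most two from the same vertex'' clause for sticks, is essentially saturated. In such subcases I would enumerate the possible adjacency patterns of $V_f$ explicitly and verify by hand that the matching condition already forces a feasible wiring. Planarity itself is not a serious obstacle: all vertices in $V_f$ lie on the boundary of $f$, $|V_f| \leq 6$, and every construction above amounts to a small matching plus at most one or two edges incident to the central vertices of $S$, which can always be drawn in the interior of $f$ respecting its cyclic order.
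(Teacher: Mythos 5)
Your overall shape (reduce to a single maximum indicator set via Lemma~\ref{lem:max-indicator}, then case-split on $\kmax$) is close in spirit to the paper's proof, which splits on $|\0V_f|$, then $\kmax$, then $|\1V_f|$ and exhibits an explicit augmentation for every configuration. But your sufficiency argument has a genuine gap in the cases $\kmax\in\{3,4\}$: you propose to ``route the external valencies mandated by $S$'' and then recurse on a residual with smaller $\kmax$. This presupposes that the residual still satisfies the matching condition, which depends on \emph{how} the external valencies are routed and can fail for a bad choice. Concretely, take an island $u$, an adjacent pair $a,b\in\2V$, a leaf $w\in\1V$, and a joker $c\in\2V$, all assigned to $f$: parity and matching hold, but satisfying $u$ by connecting it to $a,b,c$ strands the leaf $w$, which then demands two valencies from distinct vertices that no longer exist. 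Proving that a good routing always exists is essentially the whole content of the lemma; the paper does this by exhaustively enumerating the configurations (Figures~\ref{fig:basicCases}--\ref{fig:Cases4}), and the later Lemma~\ref{lem:R1}, which would legitimize such a greedy step, itself rests on Theorem~\ref{thm:characMatch} and hence on this lemma, so it cannot be invoked here without circularity. Your acknowledgement that ``a handful of very tight subcases'' need hand enumeration is exactly where the work lies, but the proposal does not identify which subcases those are or carry out the enumeration.

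Two further concrete problems. First, for an adjacent pair (two adjacent vertices of $\2V$) there is no ``internal edge'' to insert --- that would be a parallel edge; a pair needs exactly two external connections. Similarly, clipping in a stick requires an already-inserted augmentation edge to split, so it cannot be the first step of the construction. Second, your closing claim that every construction can be drawn planarly inside $f$ respecting the cyclic order is false: a path $v_1v_2v_3v_4$ of assigned degree-2 vertices incident to two faces forces the realization $\{v_1v_3,\,v_2v_4\}$, which crosses inside $f$. This is precisely why the paper introduces a separate planarity condition; the present lemma claims only (possibly non-planar) realizability, so the remark is extraneous, but as stated it is incorrect.
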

\setcounter{lemma}{\value{oldlemma}}
%
\begin{proof}
  Let~$V_f$ denote the vertices assigned to~$f$.  Recall that~$\ii V_f$
  then denotes the number of vertices in~$V_f$ with degree~$i$.  By
  assumption, we have~$|\0V_f| +|\1V_f|+|\2V_f| \le 6$.  We denote
  again the maximum number of valencies demanded by any indicator set
  by $\kmax$, and by assumption $A$ satisfies the parity condition and
  the demands of all indicator sets.  At the beginning we show how to
  solve the following basic situation.  Consider a set $N$ of four
  degree-1 vertices $u_1, u_2, u_3$ and $u_4$, possibly belonging to
  larger indicator sets, and an even number of at most six free
  valencies provided by at most two vertices $v_1$ and $v_2$ not in
  $N$.  Figure~\ref{fig:basicCases} shows the only possible
  occurrences of this situation together with a 3-regular
  augmentation.  We now reduce more complicated situations to these
  cases.

 \begin{figure}[tb]
    \centering
    \includegraphics{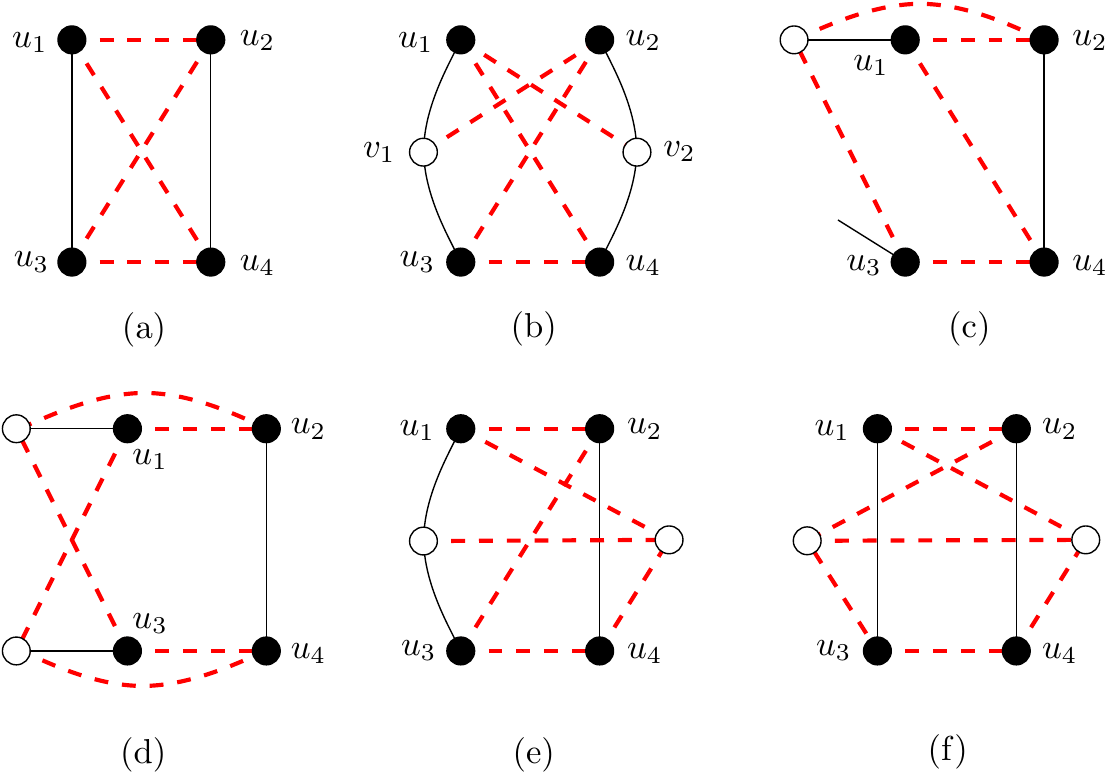}
    \caption{Basic case of Lemma~\ref{lem:matching6}.  The possible
      occurrences of four degree-1 vertices in~$N$ (black disks) and an even number of up to six free valencies provided by at most two further
      vertices~$v_1,v_2$.  For each case a corresponding augmentation
      (dashed edges) is shown.  The black adjacencies are possible but
      not necessarily present, which only yields simpler situations.
      (a) no further valencies. (b) two valencies by two further
      vertices.  This augmentation also applies if $v_1$ and $v_2$ are
      adjacent. (c) two valencies by one vertex. (d) four valencies by
      two degree-1 vertices. (e) four valencies by a degree-0 and a
      degree-2 vertex. Inserting $v_1v_2$ yields (c). (f) six
      valencies by two vertices. Inserting $v_1v_2$ yields (d).}
    \label{fig:basicCases}
  \end{figure}

  \textbf{Case 1: $|\0V_f| \geq 4$.}  Consider two pairs of degree-0
  vertices and connect each pair by an edge.  This yields four
  degree-1 vertices in a set $N$.  Outside $N$ there is an even number
  of at most six free valencies, as two further vertices cannot
  provide more valencies.  Thus, we are done according to the basic
  case above.

  \textbf{Case 2: $2 \leq |\0V_f| \leq 3$.}  Consider a fixed set $N$
  of two degree-0 vertices and connect them by an edge.  Let~$k$ be
  the number of assigned valencies outside of~$N$.  Observe that
  outside~$N$ there are at most four vertices, among them at most one
  degree-0 vertex.  Hence~$k \le 9$.  Conversely, the demand of~$N$ is
  satisfied, and hence~$k \ge 4$, moreover, $k$ is even by the parity
  condition.  If~$k = 4$, then the four remaining valencies in $N$ can
  be arbitrarily matched to those outside $N$ since the vertices in
  $N$ form a connected component by themselves.  

  For the case~$k=6$, we now distinguish cases based on $|\0V_f|$.  If
  $|\0V_f| = 2$ there is no additional degree-0 vertex outside $N$.
  Then any set of at most four vertices providing six valencies
  outside $N$ contains at least two degree-1 vertices.  We add two
  such vertices to~$N$, reducing the valencies outside~$N$ to two, and
  we are done.

  If $|\0V_f| = 3$, the additional degree-0 vertex~$v$ outside~$N$
  already provides three valencies outside.  To reach a sum of six,
  besides~$v$, there must be either a degree-2 and a degree-1 vertex
  or three degree-2 vertices outside~$N$.  In the former case we
  connect the degree-0 vertex and the degree-2 vertex by an edge
  yielding another degree-1 vertex, which we add to $N$ together with
  the remaining degree-1 vertex.  This results in the situation of
  Figure~\ref{fig:basicCases}(a).  This solution
  also applies for the second case by identifying two degree-2
  vertices with the degree-1 vertex in the first
  case.  This concludes the case~$k=6$.

  Finally, if~$k=8$, there are eight free valencies outside the
  initial set $N$, and hence any set of at most four vertices
  providing these valencies contains at least two degree-1 vertices,
  independent of whether $|\0V_f| = 2$ or $|\0V_f| = 3$.  Adding two
  such degree-1 vertices to $N$ reduces the number of valencies
  outside $N$ to four, and we are done.

  \begin{figure}[tb]
    \centering
    \includegraphics{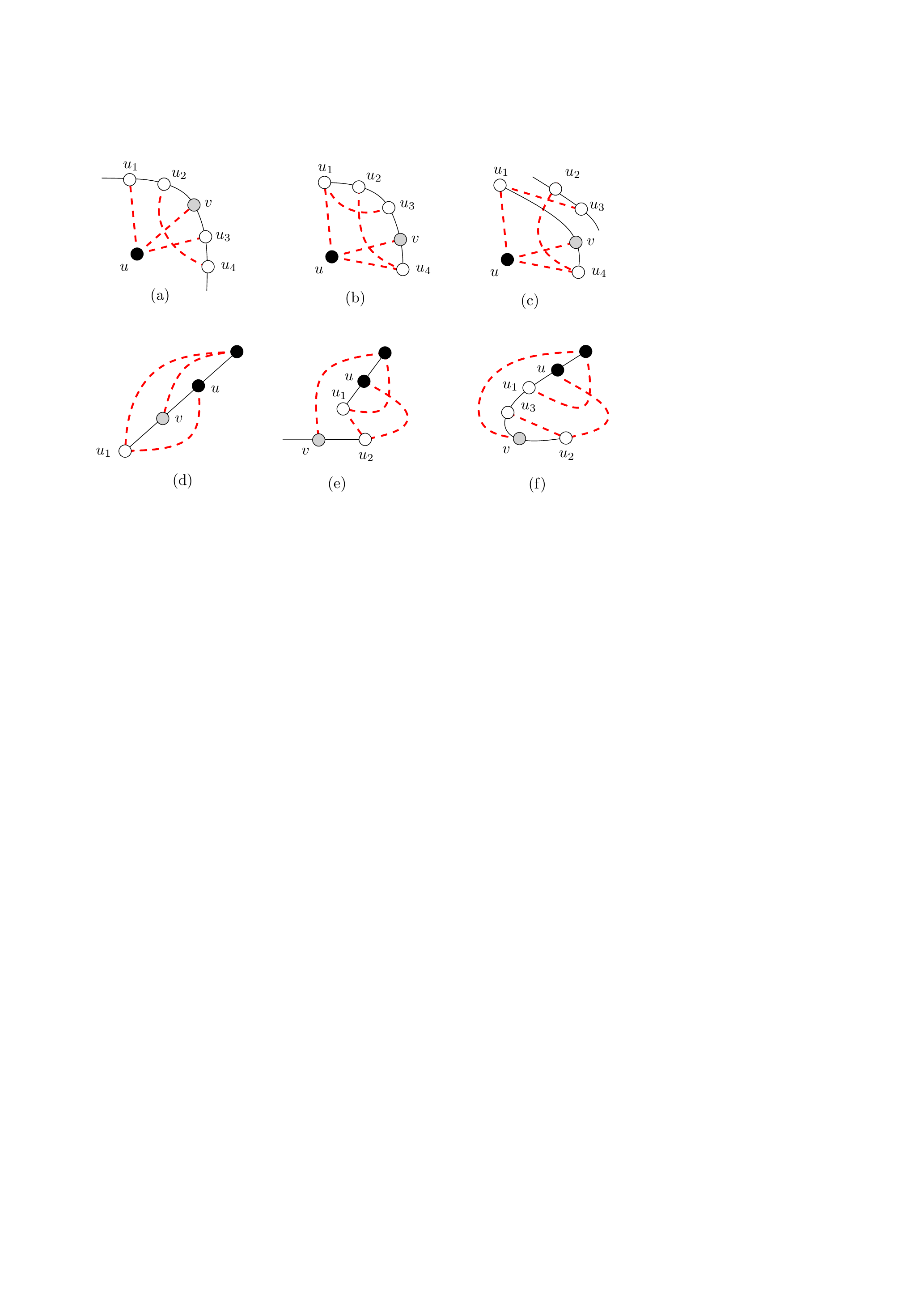}
    \caption{Illustration of the proof of Lemma~\ref{lem:matching6};
      augmentation edges are dashed.  The black vertices form a maximum indicator set~$S$ with demand~$\kmax$, the grey vertex~$v$ has degree~2 and exists due to the demand of~$S$.  All augmentations also apply if
      the solid adjacencies are (partly) dropped.  In (a)-(c)
      $\kmax=3$, and there is a single vertex~$u$ of degree-0. (a)
      $|\1V_f| = 0$, also applies if~$u_1$ and~$u_4$ are adjacent or
      ignored.  Moreover~$\{u_1,u_2\}$ or~$\{u3,u_4\}$ may be
      considered as a single degree-1 vertex.  (b) + (c) $|\1V_f| = 2$
      and $|\2V_f| > 0$; the augmentation also applies if
      $\{u_2,u_3\}$ is considered as a single degree-1 vertex. (d)-(f)
      $\kmax = 3$ and a branch $S$ with degree-2 vertex~$u \in S$.
      (d) One degree-1 vertex besides $S$ not adjacent to $u$; also
      applies if $u_1$ is considered as two degree-2 vertices.  (e)
      Two degree-1 vertices besides $S$, one adjacent to $u$; also
      applies if $u_2$ is considered as two degree-2 vertices.  (f)
      One degree-1 vertex and further degree-2 vertices besides $S$,
      adjacent to $u$.}
    \label{fig:Cases3}
  \end{figure}

  \textbf{Case 3: $|\0V_f| = 1$.}  Let $u$ denote the only degree-0
  vertex, which demands three further valencies from distinct
  vertices, that is, $\kmax \in \{3,4\}$, and there are at least three
  vertices assigned besides $u$ of which at least one vertex $v$ is of
  degree 2 in order to satisfy the parity condition.  More precisely,
  there is an even positive number of valencies provided by two, three
  or four vertices besides $u$ and $v$, as the total number of
  vertices assigned to $f$ is at most six.  We distinguish cases based
  on the demand~$\kmax$.

  If $\kmax = 3$, consider the number of assigned degree-1 vertices.
  Recall that the degree-1 vertices are pairwise non-adjacent, $\kmax$
  would be~4, otherwise.  If~$|\1V_f| = 0$, the demand of $u$ implies
  that~$V \setminus \{u,v\}$ consists of at least two degree-2
  vertices.  It follows from the parity condition that the number of
  these degree-2 vertices is either two or four.
  Figure~\ref{fig:Cases3}(a) shows a solution for four further
  degree-2 vertices.  The case of only two further degree-2 vertices
  can be deduces from Figure~\ref{fig:Cases3}(a) by ignoring $u_2$ and
  $u_4$.
  If~$|\1V_f| = 1$, the demand of~$u$ and the parity condition imply
  that there are exactly two degree-2 vertices in~$V_f \setminus
  \{u,v\}$.  An augmentation is given by Figure~\ref{fig:Cases3}(a)
  identifying $u_1$ and $u_2$ with the degree-1 vertex.
  If~$|\1V_f| = 2$, we have~$|\2V_f \setminus \{v\}| \le 2$ and even.
  This situation is solved by Figure~\ref{fig:Cases3}(b) and (c). If
  there are no additional degree-2 vertices an augmentation results
  from Figure~\ref{fig:Cases3}(a) by identifying $u_1$ and $u_2$ as
  well as $u_3$ and $u_4$ with the two degree-1 vertices (which are
  non-adjacent).
  If~$|\1V_f| = 3$, the parity condition prohibits a further degree-2
  vertex.  Thus, Figure~\ref{fig:Cases3}(c) provides an augmentation
  by identifying $u_2$ and $u_3$ with a degree-1 vertex not adjacent
  to $u_1$ and $u_4$.  The case~$|\1V_f| = 4$ is shown by
  Figure~\ref{fig:basicCases}(e), ignoring the adjacency of $u_2$ and
  $u_4$.

  If $\kmax = 4$, there must exist at least one stick~$S$ that demands
  four additional valencies, of which at most two belong to the same
  vertex.  This is, besides $S$, $u$ and $v$, there is an even
  positive number of valencies provided by at most two further
  vertices.  If~$|\1V_f| \ge 4$, we are in the situation of
  Figure~\ref{fig:basicCases}(e).  If $|\1V_f| = 3$, the parity
  condition prohibits a further degree-2 vertex.  This situation can
  be deduced from of Figure~\ref{fig:Cases3}(c) assuming $u_1$ and
  $u_4$ are adjacent, forming $S$, and identifying $u_2$ and $u_3$
  with the degree-1 vertex; the vertex $v$ may be located arbitrarily.
  If $|\1V_f| = 2$, $S$ contains the only degree-1 vertices, and there
  must be two further degree-2 vertices besides $S$, $u$ and
  $v$.  Thus, Figure~\ref{fig:Cases3}(c) provides a solution assuming
  $u_1$ and $u_4$ form $S$ and the remaining degree-2 vertices are
  located arbitrarily.

 \begin{figure}[tb]
    \centering
    \includegraphics{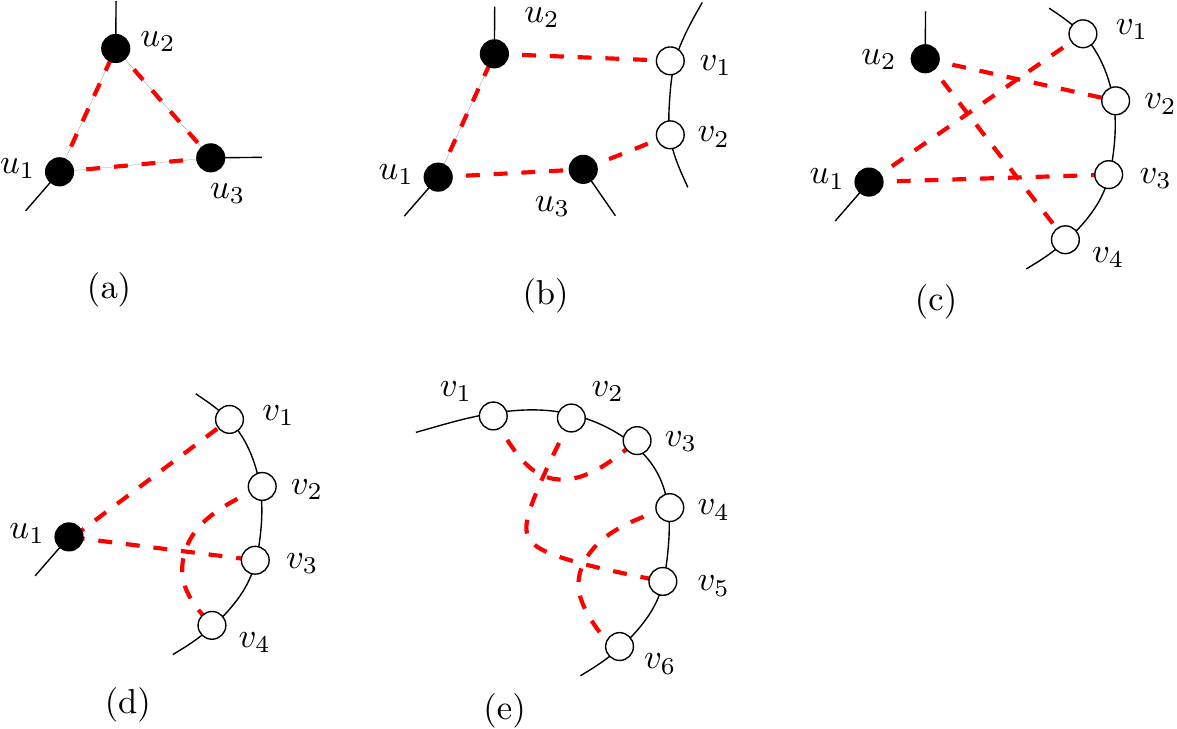}
    \caption{Augmentation (dashed edges) of assignments with
      $\kmax=2$.  The black vertices have degree~1.  All augmentations 
      also apply if the solid adjacencies are (partly) dropped.  (a) $|\1V_f| = 1$;
      augmentation also applies if $u_3$ is considered as two degree-2
      vertices. (b) $|\1V_f| = 3$ and~$|\2V_f| = 2$.  (c) $|\1V_f| =
      2$ and~$|\2V_f| = 4$; also applies if $v_1$ and $v_4$ are
      adjacent or $u_2$, $v_2$ and $v_4$ are ignored.  (d) $|\1V_f| =
      1$ and~$|\2V_f| = 4$; also applies if $v_1$ and $v_4$ are
      adjacent.  (e) $|\2V_f| = 6$; also applies if $v_1$ and $v_6$
      are adjacent or $v_4$ and $v_6$ are ignored.}
    \label{fig:Cases4}
  \end{figure}

  \textbf{Case 4: $|\0V_f| = 0$.}
  If $\kmax = 1$, there is an even number of jokers assigned to~$f$.
  We connect pairs arbitrarily.

  If $\kmax = 2$ and~$|\1V_f| \ge 4$, we are done according to
  Figure~\ref{fig:basicCases}(a)-(d).  Note that due to $\kmax =2$
  each degree-1 vertex is neither part of a stick, nor of a branch,
  and hence is not adjacent to other assigned vertices.  For~$|\1V_f|
  = 3$, Fig.~\ref{fig:Cases4}(a) and (b) show an augmentation,
  depending on whether the degree-1 vertices are accompanied by two
  degree-2 vertices or not.  Note that the total limit of six vertices
  and the parity condition does not allow for a different number of
  degree-2 vertices.  If~$\1|V_f| = 2$, there are either two or four
  degree-2 vertices, as otherwise the demands or the parity condition
  would be violated.  Corresponding solutions are given by
  Figure~\ref{fig:Cases4}(a) (by identifying $u_3$ with two
  arbitrarily located degree-2 vertices) and
  Figure~\ref{fig:Cases4}(c), respectively.  If~$|\1V_f | = 1$, there
  are two or four degree-2 vertices.  Corresponding augmentations are
  shown in Figure~\ref{fig:Cases4}(c) (ignoring $u_2$, $v_2$ and
  $v_4$) and Figure~\ref{fig:Cases4}(d), respectively.  Finally,
  if~$|\1V_f| = 0$, all valencies assigned to $f$ are provided by
  degree-2 vertices.  Figure~\ref{fig:Cases4}(e) shows an augmentation
  for six valencies.  Ignoring $v_4$ and $v_6$ in
  Figure~\ref{fig:Cases4}(e) yields an augmentation of four valencies.
  Augmenting two non-adjacent degree-2 vertices is trivial.

  If $\kmax = 3$, we distinguish cases based on whether there is a
  3-cycle assigned to~$f$.  If this is the case, let~$C$ the vertex
  set of such a cycle.  Due to the parity condition there must be a
  further vertex~$v$ of degree 2 assigned to $f$.  More precisely, by
  the demand of~$C$ and the parity condition, there is an even
  positive number of valencies provided by one or two further vertices
  outside~$C \cup \{v\}$.  If there are two valencies provided
  outside~$C \cup \{v\}$, we pair the valencies of~$C$ arbitrarily
  with the remaining three valencies.  If there are four valencies
  assigned, then they are provided by two non-adjacent degree-1
  vertices.  Connecting them by an edge reduces to the previous case.
  Now assume that there is no 3-cycle, and hence there is a branch~$S$
  demanding three valencies.  Due to the parity condition there is a
  further degree-2 vertex $v$ assigned to $f$.  By the parity
  condition there is an even positive number of valencies outside~$S
  \cup \{v\}$, which is provided by one, two or three further
  vertices.  We distinguish cases based on the number~$k$ of these
  valencies.  Note that~$k \le 6$.
  
  If~$k=2$, the valencies are provided by one or two vertices.  We
  note that if it is a degree-1 vertex, then it is not adjacent
  to~$S$, as this would contradict the demand of~$S$, and
  Figure~\ref{fig:Cases3}(d) shows a solution for this case.  The same
  augmentation works if the valencies are provided by two degree-2
  vertices by considering two degree-2 vertices that are not adjacent
  to~$S$ as the single degree-1 vertex~$u_1$ in the figure.  If~$k=4$,
  consider the case that the four valencies outside~$S \cup \{v\}$ are
  provided by two degree-1 vertices.  Figure~\ref{fig:Cases3}(e) shows
  a solution, and this also holds if~$u_2$ is replaced by two degree-2
  vertices that are located arbitrarily.  It remains to deal with the
  case where four valencies are provided by a degree-1 vertex and two
  degree-2 vertices, but the degree-1 vertex is adjacent to~$S$.
  Figure~\ref{fig:Cases3}(e) shows a solution for this case.
  If~$k=6$, all further vertices have degree~1, resulting in~$|\1V_f|
  = 4$, which can be handled by the basic case.  This concludes the
  case~$\kmax=3$.

  If $\kmax = 4$, let $S \subseteq \1V_f$ denote a stick.  If~$|\1V_f|
  = 4$, we are done according to Figure~\ref{fig:basicCases}(a)-(d).
  If~$|\1V_f| = 3$, then we have~$|\2V_f| = 2$ by the demand of~$S$
  and the total limit of six vertices.  In this case outside~$S$ there
  are exactly the four valencies demanded by~$S$, and we may connect
  them arbitrarily since~$S$ forms a connected component.  Finally,
  if~$|\1V_f| = 2$, the demand of~$S$ must be satisfied by four
  degree-2 vertices, and we connect~$S$ to these vertices arbitrarily.
\end{proof}

Given a node assignment $A$ that satisfies the parity and the matching
condition for a face $f$, the following rule picks an edge that can be
inserted into $f$.  Lemma~\ref{lem:R1} states that afterwards the remaining
assignment still satisfies the parity and the matching condition.
Iteratively applying Rule~$\ref{r:1}$ hence yields a (not necessarily planar) realization.

\begin{selectionRule}\label{r:1}
  \begin{compactenum}
  \item If $\kmax \geq 3$ let $S$ denote a maximum indicator
    set. Choose a vertex $u$ of lowest degree in $S$ and connect this
    to an arbitrary assigned vertex~$v \notin S$.
  \item If $\kmax = 2$ and $u$ is a leaf, choose~$S =
    \{u\}$, and connect $u$ to an assigned vertex~$v$.
  \item If $\kmax = 2$ and there is no leaf, let $S$ denote a path
    $xuy$ of assigned vertices in $\2V$.  Connect~$u$ to an arbitrary
    assigned vertex $v\notin S$.
  \item If $\kmax = 2$ and there is neither a leaf nor a path of three assigned
    vertices in $\2V$, let $S$ denote a pair $uw$. Connect $u$ to an
    arbitrary assigned vertex~$v\notin S$.
  \item If $\kmax = 1$, choose~$S=\{u\}$, where $u$ is a
  joker, and connect~$u$ to another joker $v$.
  \end{compactenum}
\end{selectionRule}

\newcommand{\lemmaRuleOne}{Assume $A$ satisfies the parity and matching
  condition for $f$ and let $e$ denote an edge chosen according to
  Rule 1.  Then $A$ satisfies the same conditions for $f + e$.}

\begin{lemma}
  \label{lem:R1}
  \lemmaRuleOne
\end{lemma}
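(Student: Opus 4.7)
\medskip

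\noindent\textbf{Proof plan.} The plan is to verify both conditions for $f+e$, treating parity as immediate and handling the matching condition through Lemma~\ref{lem:max-indicator}.

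For parity: the edge $e=uv$ chosen by Rule~\ref{r:1} has two assigned endpoints, so inserting it satisfies exactly one free valency at $u$ and one at $v$. The total number of free valencies assigned to~$f$ therefore drops by two and remains even, so the parity condition is preserved in $f+e$.

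For the matching condition, my plan is to exhibit, in $f+e$, a single maximum indicator set whose demand is satisfied; Lemma~\ref{lem:max-indicator} then gives the full matching condition. By Observation~\ref{obs:kmax-increase} the new maximum demand $\kmax'$ satisfies $\kmax' \le \kmax$, so only finitely many configurations for the candidate set arise. The natural candidate is the image of $S$ in $f+e$: in each case of Rule~\ref{r:1} the selected indicator set $S$ either disappears entirely (all its valencies are saturated by~$e$) or transforms into an indicator set $S'$ with $\mathrm{demand}(S')=\mathrm{demand}(S)-1$. Concretely, I would argue as follows. In case~(1) with $\kmax=4$ and $S$ a stick, the rule picks a degree-1 vertex $u\in S$; after inserting $uv$, $u$ has degree~2 and $S' := S$ becomes a branch of demand~3. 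If $S$ is an island $\{u\}$ (demand~3), $S'$ becomes a leaf of demand~2 (or, if $v$ had degree~2 and now becomes the neighbour of~$u$, a branch of demand~3 through $\{u,v\}$); if $S$ is a 3-cycle or a branch, $S'$ degenerates to a pair or to a joker. The rules~(2)--(5) for $\kmax\le 2$ behave analogously: a leaf becomes a joker, a path $xuy$ splits into two jokers, a pair becomes a single joker, and a joker disappears. In each case the numeric demand of $S'$ is met because at least $\mathrm{demand}(S)$ valencies were available outside $S$ before, exactly one of them (at $v\notin S$) has been consumed, and the demand dropped by exactly one.

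The main obstacle will be the distinctness and adjacency clauses attached to the Leaf, Branch, Island and Stick types (e.g. ``two distinct vertices'', ``at most one valency adjacent to the degree-2 vertex''). The crucial feature of Rule~\ref{r:1} making this work is that $v$ is always chosen outside $S$, so the only new adjacency incident to a vertex of $S'$ is $uv$ itself. Using this I would compare the distinctness clauses of $S'$ with the (strictly stronger) clauses of~$S$, which were met by the hypothesis, and verify that the remaining valency supply still splits across enough distinct vertices and obeys the at-most-one-adjacent-to-$u$ constraint; the latter reduces to the observation that every vertex outside $S'$ adjacent to~$u$ in $f+e$ is either $v$ or was already adjacent to~$u$ in~$f$. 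This is a finite case check driven by $\kmax\le 4$ and by the explicit choice of $u$ as a vertex of minimum degree in~$S$.
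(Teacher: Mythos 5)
Your parity argument is fine, but the matching half of the proposal has a genuine gap. Lemma~\ref{lem:max-indicator} lets you conclude the matching condition for $f+e$ only after you have verified the demand of a set that is \emph{maximum} in $f+e$, and the image of $S$ need not be that set. Rule~\ref{r:1} picks \emph{a} maximum indicator set, not the only one: if, say, $f$ contains two sticks and the rule operates on one of them, the untouched stick still has demand~$4$ in $f+e$ while the image of $S$ has dropped to a branch of demand~$3$; similarly a branch may be chosen while an island of equal demand survives elsewhere, or a type-(7) pair of isolated vertices may persist. In all such cases the set whose demand you must check is disjoint from $S$, and verifying it requires accounting for the two valencies consumed at $u$ and at $v$ together with the distinctness/adjacency clauses of that other set --- which is essentially the case analysis your sketch defers. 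Your bookkeeping identity (``demand drops by one, one outside valency is consumed'') also fails in the case you mention parenthetically: when $S$ is an island and $v$ has degree~$1$, the set grows to the branch $\{u,v\}$, its demand stays at~$3$, and the supply \emph{outside} the new set shrinks by the two valencies that $v$ used to contribute, so the count has to be rescued by the parity condition rather than by your one-for-one exchange.

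For contrast, the paper avoids chasing the maximum indicator set of $f+e$ altogether. It invokes the already-proved Theorem~\ref{thm:characMatch} to obtain a realization $W_f$ of $A$ for $f$, deletes from it only the edges $\mW$ incident to $S$ (plus at most one edge at $v$), inserts $uv$, and checks the matching condition of the \emph{residual} instance $f+(W_f-\mW)+uv$, in which free valencies live only on the small, structured set $S\cup\sX$. A realization of that residual instance then combines with $W_f-\mW$ into a realization of $A$ for $f+e$, and the conditions follow from the (easy) necessity direction of Theorem~\ref{thm:characMatch}. If you want to keep your direct route, you would have to take $S'$ to be an arbitrary maximum indicator set of $f+e$ and verify its demand in every relative position with respect to $S$ and $v$; as written, the proposal only treats the one candidate that is easiest to control.
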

\begin{proof}
  It follows from Theorem~\ref{thm:characMatch} that $A$ is realizable
  for $f$.  Thus, let $W_f$ denote a 3-regular augmentation for~$f$.
  If $e = uv \in W_f$, we are done since $W_f \setminus \{e\}$ is a
  3-regular augmentation for $f+e$.

  Hence, assume $uv \notin W_f$.  We consider the set $\mW \subseteq
  W_f$ of edges that are incident to vertices in $S$, where $S$ is the
  set determined by the rule.  It is $u \in S$ and $v\notin S$ for all
  rules.  The deletion of $\mW$ in $f + W_f$ yields a set $\sX \cup S$
  of vertices which have again free valencies in $f + (W_f - \mW)$.
  If $v$ is connected to a vertex of~$S$ in $W_f$, then we already
  have $v\in \sX$.  Otherwise, we add an arbitrary edge $vx \in W_f$
  to $\mW$, which yields $v,x \in \sX$.  Clearly, $A$ satisfies the
  parity condition and the matching condition for $f + (W_f - \mW)$,
  and, after insertion of~$e=uv$, at least the parity condition for $f
  + (W_f - \mW) + uv$.  In the following we show that $A$ also
  satisfies the demand of a maximum indicator set $S'$ in $f + (W_f -
  \mW) + uv$.  Then $A$ satisfies the matching condition for $f + (W_f
  - \mW) + uv$ by Lemma~\ref{lem:max-indicator}, and there exists an
  augmentation $\pW$ for $f + (W_f - \mW) + uv$ such that $(W_f - \mW)
  + \pW$ is an augmentation for $f+e$.  Hence, $A$ satisfies the
  parity condition and the matching condition for $f+e$ as claimed by
  the lemma.

  For each subrule we distinguish cases based on the type
  of~$S$.  Recall that the insertion of edges never increases
  $\kmax$.  Thus, any maximum indicator set in $f + (W_f - \mW) + uv$
  demands at most as many valencies as a maximum indicator set in
  $f$.  Note further, that $v$ has degree~3 in $f + (W_f - \mW) + uv$
  if $v$ is not matched to $S$ in $W_f$.

  {\bf Subrule 1:} In this case,~$S$ is a maximum indicator set, and
  we distinguish further cases based on the exact type of~$S$.

  \emph{Case I:} Assume $S$ is a stick or an indicator set of
  type~(\ref{prop:adj_deg0}).  If $v$ was connected to $s \in S$ in
  $W_f$, then, except for the valency at~$s$, which was connected
  to~$u$, the valencies in $\sX \setminus (S \cup \{v\})$ were matched
  to $S$.  Due to the symmetry of $S$, these valencies can be also
  matched to $S$ preserving the necessary valency for $v$ at $u$.
  Thus, $A$ satisfies the parity condition and the matching condition
  for $f + e$, according to Theorem~\ref{thm:characMatch}.

  If $v$ was not connected to $S$ in $W_f$ and $S$ is a set of type
  (\ref{prop:adj_deg0}), we connect the isolated degree-0 vertices
  in~$S$ by an edge, such that $S$ becomes a stick.  Since this does
  not change the demand of $S$, which remains a maximum indicator set,
  Lemma~\ref{lem:max-indicator} and Theorem~\ref{thm:characMatch}
  imply that $A$ is still realizable for $f + (W_f - \mW)$ after this
  insertion.  Thus, we identify this case with the next case, where
  $S$ is a stick.

  Assume that~$S$ is a stick and $v$ was not connected to $S$
  in~$W_f$.  Then $S$ provides three valencies at two distinct
  vertices in $f + (W_f - \mW) + uv$.  The vertices with free
  valencies in~$f + (W_f - \mW) + uv$ are partitioned into two
  disjoint, non-adjacent groups, namely $S$ and~$\sX\setminus (S\cup
  \{v\})$.  We show that no maximum indicator set~$S'$ contains
  vertices of both sets, and then argue that the group that has empty
  intersection with a maximum indicator set~$S'$ provides enough
  valencies to satisfy the demand of~$S'$.

  First observe that~$S$ and~$\sX\setminus (S\cup \{v\})$ are disjoint by
  definition and non-adjacent since~$S$ was a stick (before connecting
  it to~$v$).  Let~$S'$ be any maximum indicator set $f + (W_f - \mW)
  + uv$.  By definition~$S$ does not contain a degree-0 vertex, which
  could belong to an indicator set of type~(\ref{prop:adj_deg0}).
  Since all other indicator sets are connected it follows that
  either~$S' \subseteq S$ or~$S' \cap S = \emptyset$.  
   The set $\sX\setminus (S\cup \{v\})$ provides (at least) $4 + 1 = 5$
  valencies on at least three distinct vertices in $f + (W_f - \mW) +
  uv$.  Recall that if $S$ has originally been a set of type
  (\ref{prop:adj_deg0}), it may have induced six valencies provided by
  $\sX\setminus (S\cup \{v\})$ in $f + (W_f - \mW) +
  uv$.  The valencies in $\sX\setminus (S\cup
  \{v\})$ clearly satisfy the demand of any maximum indicator set
  $S'\subseteq S$.  If $S'\subseteq \sX\setminus (S\cup \{v\})$, then
  $S'$ either consists of at most two vertices or it is a 3-cycle.  In
  both cases there exists at least one valency in $\sX\setminus (S\cup
  \{v\})$ outside $S'$ since $S'$ provides at most three valencies.
  Thus, $S$ together with this valency provides four valencies on at
  least three distinct vertices, which satisfies the demand of any
  indicator set $S' \subseteq \sX\setminus (S\cup \{v\})$.

  \emph{Case II:} Assume $S$ is a 3-cycle or an island.  If $v$ was
  connected to $s \in S$ in $W_f$, we are done by the same symmetry
  argument as in the beginning of Case I.

  If $v$ was not connected to $S$ in $W_f$, $S$ provides two valencies
  at at most two distinct vertices in $f + (W_f - \mW) + uv$, and
  again each maximum indicator set $S'$ is completely contained either
  in $S$ or in $\sX\setminus (S\cup \{v\})$. 

   The latter provides $3 +
  1 = 4$ valencies on at least two distinct vertices in $f + (W_f - \mW) + uv$.  
  This clearly satisfies the demand of any maximum indicator set
  $S'\subseteq S$, 
  since $S'$ demands at most two valencies in $f + (W_f - \mW) + uv$.  
  If $S'\subseteq \sX\setminus
  (S\cup \{v\})$, there exists at least one valency in $\sX\setminus
  (S\cup \{v\})$ outside $S'$ since each maximum indicator set (for
  $\kmax = 3$) provides only three valencies.  Thus, $S$ together with
  this valency provides three valencies at three distinct vertices if
  $S$ was a 3-cycle (before connecting~$u$ and~$v$), and at two
  distinct vertices if $S$ was an island.  In both cases this satisfies
  the demand of any maximum indicator set $S' \subseteq \sX\setminus
  (S\cup \{v\})$ with $\kmax = 3$.  Note that in the latter case
  $S'\subseteq \sX\setminus (S\cup \{v\})$ is no island since together
  with~$u$ this would have induced a set of type~(\ref{prop:adj_deg0})
  in $f$, contradicting $\kmax = 3$.

  \emph{Case III:} Assume $S$ is a branch.  Recall that $u$ is the
  degree-1 vertex in $S$.  Denote the degree-2 vertex in $S$ by $r$.
  The second vertex~$x$ besides $u$ that is adjacent to $r$ in $f +
  (W_f - \mW) + uv$ has at least degree 2 since $S$ can be connected
  to this vertex by at most one edge in $W_f$.  However, unlike the
  previous cases, it is now possible that a maximum indicator set~$S'$
  has nonempty intersection with both $S$ and $\sX\setminus (S\cup
  \{v\})$.  This is the case where~$S' = \{r,x\}$ and $x$ has
  degree~2, and we have to consider this case in addition to the usual
  ones.  Observe that $S$ provides two valencies at $u$ and $r$ in $f
  + (W_f - \mW) + uv$.

  If $v$ was connected to $r \in S$ in $W_f$, $v$ has degree~3 in $f +
  (W_f - \mW) + uv$, as $uv \notin W_f$, and $\sX\setminus (S\cup
  \{v\})$ provides exactly two valencies at two degree-2 vertices (the
  ones that were adjacent to~$u$ in~$W_f$.  This clearly satisfies the
  demand of each maximum indicator set $S'\subseteq S$ since in this
  case $S'$ demands at most two vertices.  If $S' \subseteq
  \sX\setminus (S\cup \{v\})$, then since $S' \subseteq \sX\setminus
  (S\cup \{v\})$ consists of two degree-2 vertices, $S'$ demands two
  valencies, which are satisfied by $S$.  If $S' = \{x,r\}$ is the
  pair containing vertices of both groups, its demand is satisfied by
  $u$ and the degree-2 vertex in $\sX\setminus (S\cup \{v\})$
  different from~$x$.

  If $v$ was not connected to $r \in S$ in $W_f$, $\sX\setminus (S\cup
  \{v\})$ provides $3+1 = 4$ valencies on at least three vertices, due
  to $\kmax = 3$.  This clearly satisfies the demand of each maximum
  indicator set $S'\subseteq S$.  If $S'\subseteq \sX\setminus (S\cup
  \{v\})$, there exists at least one valency in $\sX\setminus (S\cup
  \{v\})$ outside $S'$ since each maximum indicator set provides only
  three valencies.  Thus, $S$ together with this valency provides
  three valencies at three distinct vertices, which satisfies the
  demand of each maximum set $S' \subseteq \sX\setminus (S\cup \{v\})$
  with $\kmax = 3$.  Finally, if~$S' = \{r,x\}$ is the pair containing
  vertices of both groups, the demand of $S'$, which is~2, is easily
  satisfied by the remaining three valencies in $\sX\setminus (S\cup
  \{v\}) $.  This concludes the treatment of subrule~1.

  {\bf Subrules 2 and 4:} In this case the set $S$ is a leaf or a pair
  (of adjacent degree-2 vertices), and we have~$\kmax=2$.  If $v$ was
  matched to $s \in S$ in $W_f$, we are done by the same symmetry
  argument as before.  

  If $v$ was not matched to $S$ in $W_f$, then $S$ provides one
  valency at one vertex in $f + (W_f - \mW) + uv$.  Again we argue
  that each maximum indicator set $S'$ is either in $S$ or in
  $\sX\setminus (S\cup \{v\})$.  If~$S'$ is a leaf, this is clear and
  if it is a pair $\{u,w\}$, that is if subrule 4 is applied, the
  neighbors of $u$ and $w$ have degree~$3$ as subrule~3 would have
  been applied otherwise.  Hence~$S$ and~$\{u,w\}$ are either equal or
  disjoint.

  The set $\sX\setminus (S\cup \{v\})$ provides $2+1=3$ valencies on
  at least two distinct vertices.  This clearly satisfies the demand
  of each maximum indicator set $S'\subseteq S$ since in this case
  $S'$ demands only one valency.  If $S'\subseteq \sX\setminus (S\cup
  \{v\})$, there exists at least one valency in $\sX\setminus (S\cup
  \{v\})$ outside $S'$ since each maximum indicator set (for $\kmax =
  2$) provides only two valencies.  Thus, $S$ together with this
  valency provides two valencies on at least two distinct vertices,
  which satisfies the demand of each maximum set $S' \subseteq
  \sX\setminus (S\cup \{v\})$ with $\kmax = 2$.

  {\bf Subrule 5:} Since $S$ is a joker, that is $\kmax = 1$, all
  vertices in $\sX$ are jokers in $f + (W_f - \mW) $.  Jokers can be
  matched arbitrarily, and thus, there exists an augmentation that
  contains $e = uv$.  By Theorem~\ref{thm:characMatch}, $A$ then
  satisfies the parity condition and the matching condition for $f+e$.

  {\bf Subrule 3:} Subrule 3 differs from the remaining rules since
  $S$ is no indicator set.  Consequently, $W_f$ may contain the edge
  $xy$ between the endpoints of the path forming~$S$.  If~$W_f$ does
  not contain~$xy$, then $\sX\setminus (S\cup\{v\})$ provides four
  valencies in $f + (W_f - \mW) +uv$ (three for~$S$ and one for~$v$).
  If~$xy \in W_f$, then $S\cup\{v\}$ provides four
  valencies in $f + (W_f - \mW) +uv$.  Thus, in total $\sX$ provides
  at least four valencies in $f + (W_f - \mW) +uv$.  It hence
  satisfies the demand of any maximum indicator set $S'$ since $\kmax
  = 2$ and $S'$ is no leaf, otherwise subrule~2 would have been
  invoked instead of subrule~3.
\end{proof}

Our next goal is to extend this characterization and the construction
of the assignment to the planar case.  Consider a path of degree-2
vertices that are incident to two distinct faces~$f$ and~$f'$ but are
all assigned to~$f$.  Then a \emph{planar} realization for~$f$ may not
connect any two vertices of the path.  Hence the following sets of
vertices demand additional valencies, which gives a new condition.
\begin{compactenum}[(1)]\itemsep=0ex
\item A path $\pi$ of $k > 2$ assigned degree-2 vertices that are
  incident to two distinct faces (end vertices not adjacent) demands
  either $k$ further valencies or at least one valency from a
  different connected component.  \label{prop:path}
\item A cycle $\pi$ of $k > 3$ assigned degree-2 vertices that are
  incident to two distinct faces demands either $k$ further valencies
  or at least two valencies from two distinct connected components
  different from $\pi$.  \label{prop:cycle}
\end{compactenum}

\begin{condition}[planarity]
  The demand of each path of $k > 2$ and each cycle of $k > 3$
  degree-2 vertices that are incident to two faces and that are assigned to $f$,
  is satisfied.
\end{condition}
%
%
Obviously, the planarity condition is satisfied if and only if the
demand of a longest such path or cycle is satisfied.  We prove for a
node assignment~$A$ and a face~$f$ that the parity, matching, and
planarity condition together are necessary and sufficient for the
existence of a planar realization for a face~$f$.  To construct a
corresponding realization we give a refined selection rule that
iteratively chooses edges that can be embedded in~$f$, such that the
resulting augmentation is a planar realization of~$A$ for~$f$.  The
new rule considers the demands of both maximum paths and cycles and
maximum indicator sets, and at each moment picks a set with highest
demand.  If an indicator set is chosen, essentially Rule~\ref{r:1} is
applied.  However, we exploit the freedom to choose the endpoint~$v$
of~$e=uv$ arbitrarily, and choose $v$ either from a different
connected component incident to~$f$ (if possible) or by a right-first
(or left-first) search along the boundary of~$f$.  This guarantees
that even if inserting the edge~$uv$ splits~$f$ into two faces~$f_1$
and~$f_2$, one of them is incident to all vertices that are assigned
to~$f$.  Slightly overloading notation, we denote this face by~$f+e$
and consider all remaining valencies assigned to it.  We show in
Lemma~\ref{lem:R2} that $A$ then satisfies all three conditions
for~$f+e$ again.

\begin{selectionRule}\label{r:2}
  Phase 1: Different connected components assign valencies to~$f$.
  \begin{compactenum}
  \item If there exists a path (or cycle) of more than $\kmax$
    assigned degree-2 vertices, let $u$ denote the middle vertex
    $v_{\lceil k/2 \rceil}$ of the longest such path (or cycle) $\pi =
    v_1,\dots v_k$. Connect $u$ to an arbitrary assigned vertex $v$ in another
    component.
  \item If all paths (or cycles) of assigned degree-2 vertices have
    length at most $\kmax$, apply Rule~\ref{r:1}, choosing the vertex
    $v$ in another component.
  \end{compactenum}
  Phase 2: All assigned valencies are on the same connected component.
  Consider only paths of assigned degree-2 vertices that are incident
  to two distinct faces:
  \begin{compactenum}
  \item If there exists a path that is longer than $\kmax$, let $u$
    denote the right endvertex~$v_k$ of the longest path $\pi =
    v_1,\dots v_k$.  Choose~$v$ as the first assigned vertex found by
    a right-first search along the boundary of~$f$, starting from~$u$.
  \item If all paths have length at most $\kmax$, apply
    Rule~\ref{r:1}, choosing $v$ as follows:\\ Let~$v_1,v_2$ denote the
    first assigned vertices not adjacent to~$u$ found by a left- and
    right-first search along the boundary of~$f$, starting at~$u$.  If
    $S$ is a branch and one of~$v_1,v_2$ has degree~2, choose it
    as~$v$.  In all other cases choose~$v = v_1$.
  \end{compactenum}
\end{selectionRule}

\newcommand{\lemmaRuleTwo}{Assume $A$ satisfies the parity, matching,
  and planarity condition for~$f$ and let $e$ be an edge chosen
  according to Rule~\ref{r:2}.  Then~$A$ satisfies all conditions also
  for~$f+e$.}

\begin{lemma}
  \label{lem:R2}
  \lemmaRuleTwo
\end{lemma}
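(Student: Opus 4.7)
My plan is to verify the parity, matching, and planarity conditions for $f+e$ separately, leveraging Lemma~\ref{lem:R1} wherever possible.

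The parity condition is immediate, as inserting the single edge $e=uv$ removes exactly two free valencies. For the matching condition I would reduce to Lemma~\ref{lem:R1}. This is direct in Phase~1 Case~2 and Phase~2 Case~2, which explicitly invoke Rule~\ref{r:1}. For Phase~1 Case~1 and Phase~2 Case~1, although $u$ is chosen as the middle (respectively an endpoint) of a long path or cycle of assigned degree-2 vertices, its neighbors along that path or cycle together with $u$ form an indicator set of type pair, 3-path, or 3-cycle, which places the situation in one of Subrules~1, 3, or~4 of Rule~\ref{r:1}. In every case the choice of $v$ prescribed by Rule~\ref{r:2} satisfies $v\notin S$, so Lemma~\ref{lem:R1} yields the matching condition for $f+e$.

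The planarity condition is the main obstacle, and I would split it by phase. In Phase~1 the edge $e$ merges the components of $u$ and $v$. If at least one further component still assigns valencies to $f$, every path or cycle of assigned degree-2 vertices retains access to a valency in a different component, so planarity holds trivially for $f+e$. Otherwise the merger produces a single component, and we fall back to the Phase~2 situation; the specific choice of $u$ as the middle vertex in Case~1 is what forces the longest path or cycle to be roughly halved, so that the Phase~2 invariant can be re-established. In Phase~2 only one component assigns valencies. In Case~1 the endpoint choice $u=v_k$ shortens the longest path from $k$ to $k-1$, and a careful bookkeeping of the two consumed valencies (one at $u\in\pi$, one at $v$, which by the right-first search need not lie in $\pi$) keeps the planarity inequality valid. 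In Case~2 no path exceeds $\kmax$, so the planarity demand is dominated by the matching condition just verified.

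The most delicate ingredient, cutting across both phases, is that inserting $e$ may split $f$ into two faces; the notation $f+e$ then refers to the side incident to all remaining assigned vertices, and we must prove that such a unique side always exists. This is exactly what the right-first (or left-first) boundary search in Rule~\ref{r:2} is designed to guarantee: scanning from $u$ outward, the first assigned vertex encountered in the chosen direction lies on the same boundary arc as every other still-assigned vertex, so the opposite boundary arc is free of assignments. I expect the main work to be a case analysis along the boundary walk, with particular care in the branch case of Phase~2 Case~2 where the tie-breaking rule between $v_1$ and $v_2$ is needed to keep the degree-2 vertex of the branch on the correct side of the split.
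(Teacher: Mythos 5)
Your treatment of the parity condition and your overall decomposition match the paper, but there is a genuine gap in how you handle the matching condition for the first subrules of Rule~\ref{r:2}. You claim that when $u$ is the middle vertex (or an endpoint) of a long path or cycle, $u$ together with its path-neighbours forms a pair, 3-path or 3-cycle and that this ``places the situation in one of Subrules~1, 3, or~4 of Rule~\ref{r:1}'', so that Lemma~\ref{lem:R1} applies. This reduction fails: Lemma~\ref{lem:R1} only covers edges that Rule~\ref{r:1} would actually choose, and Rule~\ref{r:1} is driven by \emph{maximum indicator sets}. The first subrules of Rule~\ref{r:2} fire precisely when a path or cycle is \emph{longer} than $\kmax$; if, say, $\kmax=3$ because a branch is assigned to $f$, Subrule~1 of Rule~\ref{r:1} would connect a vertex of the branch, not the middle of the path, and if $\kmax=2$ with a leaf present, Subrule~2 would connect the leaf. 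A 3-path of degree-2 vertices is explicitly not an indicator set, and a pair is a maximum indicator set only when $\kmax=2$, so the edge chosen by the first subrules is in general not an edge ``chosen according to Rule~\ref{r:1}'', and the matching condition needs a direct argument. The paper supplies one: the insertion creates no new indicator set (since $u$ reaches degree~3); any maximum indicator set $S'$ of $f+e$ is disjoint from $S$ unless it is a pair contained in $S$; in the disjoint case $S$ still provides $\kmax$ valencies because $k>\kmax$; and the pair-inside-$S$ case is settled by counting valencies using the parity and planarity conditions for $f$.

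A second, smaller gap: in Phase~2, Case~2 you assert that ``the planarity demand is dominated by the matching condition''. This overlooks that the inserted edge can itself create a \emph{new} path of three assigned degree-2 vertices incident to two faces (when $S$ is a branch and $u$ is joined to another degree-1 vertex); such a path is not an indicator set, so the matching condition for $f+e$ says nothing about its demand. The paper devotes a separate case ($e\subseteq\pi$) to this, using the failure of the left- and right-first searches to find a degree-2 vertex in order to count at least seven assigned valencies, and then invoking parity to find the extra valency the new path needs. Your instinct about the face-splitting issue is sound, but note that the paper resolves it in the text preceding the rule, as a property of the right-first choice of $v$, rather than inside this lemma's proof.
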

%
\begin{proof}
  Let $A$ be a node assignment that satisfies the parity condition and
  the matching condition for $f$. Let $A$ further satisfies the
  planarity condition for $f$.  Suppose $e = uv$ is chosen by one of
  the subrules of Rule~\ref{r:2}.  If $e$ is chosen by one of the
  second subrules of Rule~\ref{r:2}, which refer to Rule~\ref{r:1},
  let $S$ denote the set of vertices defined by Rule~\ref{r:1} in
  order to determine $u$.  Otherwise, define $S$ as the longest path
  (or cycle) from which the rule choses $u$.  In both cases it is $u
  \in S$ and $v\notin S$.  If $S$ originates from Rule~\ref{r:1}, we
  define $k:= \kmax$. Otherwise, $k$ denotes the number of valencies
  provided by $S$. Note that in the former case $k$ also describes the
  number of valencies provided by $S$, unless $S$ is an indicator set
  of type (\ref{prop:adj_deg0}).

  Let $\pi$ denote a maximum path (or cycle) in $f+e$. We denote the
  length of $\pi$ by $|\pi|$. Recall that a maximum path consists of
  $|\pi|\geq 3$ assigned degree-2 vertices that are incident to two
  distinct faces.  We distinguish two cases. The first case considers
  a maximum path (or cycle) $\pi$ in $f+e$ that already exists in
  $f$. This is, $\pi$ does not contain $e$. We write $e\not \subseteq
  \pi$. In $f$ $\pi$ is not necessarily maximum.  The second case
  considers a maximum path (or cycle) $\pi$ in $f+e$ that contains
  $e$, i.e., that arises due to the insertion of $e$.  In the
  following we prove that the demand of $\pi$ is satisfied in $f+e$ in
  both cases.  Since $\pi$ is maximum, this implies that the demand of
  all paths (or cycles) considered in the planarity condition is
  satisfied. Thus, the planarity condition is satisfied for $f+e$.
  The parity condition is obviously satisfied in $f+e$, since the
  number of assigned valencies decreases by 2 due to the insertion of
  $e$. In a final step we will prove that the matching condition is
  satisfied for $f+e$.  We first focus on the \emph{planarity
    condition}.

  {\bf Case I:} $e\not \subseteq \pi$.  Then, it is $|\pi|\leq
  k$. Otherwise the rule would have chosen $\pi$ for $S$ in $f$.
  Furthermore, $\pi$ does not intersect with $S$, i.e., $\pi$ is
  either contained in $S$ or $S$ and $\pi$ are disjoint. If $\pi$
  intersected with $S$, $S$ would contain at least one degree-2 vertex
  that is incident to two distinct faces. If $S$ originates from
  Rule~\ref{r:1}, $S$ is an indicator set, and the only indicator sets
  that contain such a vertex are pairs and jokers.  These, however,
  only occur for $\kmax \leq 2$. Thus, in this case $\pi$ with $|\pi|
  \geq 3$ does not exist.  If $S$ is defined by the first subrules of
  Rule~\ref{r:2}, it is either a maximum path (or cycle) or it
  contains no degree-2 vertex incident to two faces. The latter may happen if subrule~1
  of Phase~1 chooses an ``inner'' path (one that is not incident to
  two distinct faces) for $S$.  In the former case the assumption that $\pi$ intersects $S$ yields a longer path (or cycle) in $f$
  contradicting the maximality of $S$.
  We distinguish cases based on the position of $\pi$ in $f+e$.

  \emph{Case A:} $\pi \cap S = \emptyset$.  The set $S$ provides at
  least $k-1$ valencies in $f+e$. Recall that $|\pi| \leq k$.  If
  $|\pi| < k$ the demand of $\pi$ is satisfied by $S$.  If $|\pi| = k$
  there are at least $k + (k - 1)$ valencies assigned to $f +
  e$. Since the parity condition is obviously preserved by the
  insertion of $e$, the parity condition for $f + e$ guarantees a
  further valency outside $\pi$. Thus, the demand of $\pi$ is
  satisfied.

  \emph{Case B:} $\pi \subseteq S$.  Then $S$ is either a path of at
  least three or a cycle of at least four assigned degree-2 vertices in
  $f$ that are incident to two distinct faces. 
  Thus, $e$ is chosen by one of the
  first subrules of Rule~\ref{r:2}. 
  Recall that $S$ looses one valency due to the insertion. 
  We distinguish whether~$\pi$ is a
  path or a cycle.
  
  Suppose $S$ is a path of length $k \geq 3$ in Phase~1, where $f$ is
  incident to distinct connected components.  In this case the
  insertion of $e$ splits $S$ into two paths $\pi_1$ and $\pi_2$, both
  of length $(k-1)/2$ if $k$ is odd, and $|\pi_1 | = k/2$ and $|\pi_2|
  = k/2 -1$ if $k$ is even. It is $\pi = \pi_1$.
  In the latter case, if $k$ is even, the parity condition for $f$
  guarantees a further valency outside $S$ besides the valency at
  $v$. Thus, $\pi_2$ together with this valency satisfies the demand
  of $\pi$. Analogously, $\pi$ and $\pi_2$ mutually satisfy their
  demands if $k$ is odd.

  Now suppose $S$ is a path of length $k \geq 3$ in Phase~2, where $f$ is
  incident to one component.  Then~$u$ is chosen as an endvertex of
  $S$ and the resulting path $\pi$ demands $k-1$ valencies in $f + e$.
  Since the planarity condition is satisfied for $f$, the demand of
  $S$ in $f$ is satisfied by~$k$ valencies outside $S$. Recall that
  the demand of $S$ can not be satisfied by a valency from a different
  component, since there are no valencies assigned to distinct
  components in Phase~2. Thus, in $f+e$ remain at least $k-1$
  valencies outside $S$, and hence, outside $\pi$, which satisfy the
  demand of $\pi$.

  Suppose $S$ is a cycle of length $k \geq 4$ in Phase~1.  In this case $u\in
  S$ is connected to a vertex $v$ at a different component, and $S$
  becomes a path of length $k-1 \geq 3$, which is $\pi$.  Since the planarity
  condition is satisfied for $f$, the demand of $S$ in $f$ is
  satisfied by two further valencies from two further components or by
  $k$ assigned valencies outside $S$.  In the first case remains at
  least one component in $f+e$, which satisfies the demand of $\pi$.
  In the second case remain at least $k-1$ valencies outside $S$ in
  $f+e$, which satisfy the demand of $\pi$.  Phase~2 considers no
  cycles.

  {\bf Case II:} $e\subseteq \pi$.  In order to create a new path (or
  cycle) of assigned degree-2 vertices that are incident to two
  distinct faces, $e$ must connect two degree-1 vertices from the same
  component. Thus, the only rule possibly choosing such an edge is the
  second subrule in Phase~2. Note that in Phase~2 it is $\kmax \leq
  3$, since any indicator set of demand~$4$ would induce an additional
  connected component.
  If this rule is applied for $\kmax = 2$, the longest path (or cycle)
  that can occur consists of two vertices, which is no path (or cycle)
  as considered in the planarity condition. If this rule is applied
  for $\kmax = 3$, $S$ is a branch, and the rule connects the degree-1
  vertex $u \in S$ to a second degree-1 vertex $v$. This yields a path
  $\pi$ of length 3. Note that creating a cycle in this way is not
  possible, since $f$ is incident to only one component.

  Suppose the rule creates a new path $\pi$ of length 3.  Then no
  feasible degree-2 vertex could be reached by a left first or right
  first search from $u\in S$. Otherwise, the rule would have connected
  $u$ to this vertex. This is, the degree-2 vertex $r\in S$ is
  adjacent to a degree-3 vertex and the first valency found in the
  opposite direction from $v$ also belongs to a degree-1 vertex
  $w$. Thus, there are at least $3 + 2 + 2 = 7$ valencies from $S$,
  $v$ and $w$ assigned to $f$. Due to the parity condition for $f$
  there is a further assigned valency outside $S$. In $f+e$ this
  valency together with $w$ satisfies the demand of the newly created
  path $\pi$.

  Finally we prove that the \emph{matching condition} is satisfied in
  $f+e$. The second subrules of Rule~\ref{r:2} inherit these property
  from Rule~\ref{r:1}. Thus, we focus on the first subrules, where $S$
  is a path (or cycle) of length $k >\kmax$ in $f$.  In order to prove
  the matching condition let $S'$ denote a maximum indicator set in
  $f+e$.  We prove that the demand of $S'$ is satisfied in $f+e$. Then
  the matching condition is satisfied for $f+e$, according to
  Lemma~\ref{lem:max-indicator} and Theorem~\ref{thm:characMatch}.

  Note that the insertion of $e$ does not create any new indicator
  set, since $u$ becomes a degree-3 vertex in $f+e$.  Observe further
  that $S'\cap S = \emptyset$, unless $S'$ is a pair, which indicates
  $\kmax = 2$.

  First suppose $S'\cap S = \emptyset$ and recall that $k>\kmax$.
  This is, $S$ still provides $\kmax$ valencies at $\kmax$ degree-2
  vertices in $f+e$.  Obviously, this satisfies the demand of $S'$ in
  $f+e$.

  Now suppose $S' \subseteq S$ is a pair.  If $k \geq 5$, $S$ provides
  at least two valencies outside $S'$ in $f+e$, which satisfy the
  demand of $S'$.  If $k = 4$, $S$ provides one valency outside $S'$
  in $f+e$. However, the parity condition for $f+e$ guarantees a
  further valency outside $S$, such that the demand of $S'$ is
  satisfied.  The case $k = 3$ occurs only in Phase~2. In Phase~1
  $k=3$ would not yield a pair $S'\subseteq S$ in $f+e$.  Thus, if $k =
  3$, $S$ becomes $S'$ in $f+e$, and the planarity condition for $f$
  guarantees at least three further valencies outside $S$. Thus, in
  $f+e$ remain at least two valencies outside $S'$, which satisfy the
  demand of $S'$.
\end{proof}

Given a node assignment~$A$ and a face~$f$ satisfying the parity, 
matching, and planarity condition,
iteratively picking edges according to Rule~\ref{r:2} hence yields a
planar realization of~$A$ for~$f$.  Applying this to every face yields
the following theorem.

\newcommand{\theoremPlanarRealizable}{There exists a planar
  realization $W$ of $A$ if and only if $A$ satisfies for each face
  the parity, matching, and planarity condition; $W$ can be computed
  in $O(n)$ time.}

\begin{theorem}
  \label{thm:planar-realizable}
  \theoremPlanarRealizable
\end{theorem}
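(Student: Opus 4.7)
The plan is to assemble the result from the lemmas already established, handling necessity by a short structural argument and sufficiency by iterating Rule~\ref{r:2} face by face and then gluing with Lemma~\ref{lem:global-local}.

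For necessity, assume that a planar realization $W$ of $A$ exists. By Lemma~\ref{lem:global-local}, for every face $f$ the subset $W_f \subseteq W$ of edges embedded into $f$ is a planar realization of $A$ for $f$. The parity condition is immediate since each edge of $W_f$ consumes exactly two valencies assigned to $f$. The matching condition is necessary as already noted in the discussion preceding Theorem~\ref{thm:characMatch}. For the planarity condition, consider a path (or cycle) $\pi$ of $k>2$ (resp.\ $k>3$) assigned degree-2 vertices incident to both $f$ and another face $f'$; inside $f$, $\pi$ appears as a chain on the boundary, and any chord of $\pi$ drawn inside $f$ separates $f$ into two subfaces each containing a strict subset of $\pi$. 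An inductive counting along these subfaces shows that the number of internal edges of $W_f$ with both endpoints in $\pi$ is at most $\lfloor k/2\rfloor -1$ for a path and at most $\lfloor k/2\rfloor -1$ for a cycle with one distinguished side, so at least one (resp.\ two) of the $k$ valencies of $\pi$ must be matched to vertices outside $\pi$; for cycles these outside partners must moreover lie in two distinct components different from~$\pi$, giving exactly the demands listed in the planarity condition.

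For sufficiency, fix a face $f$ and suppose $A$ satisfies the parity, matching, and planarity condition for $f$. Apply Rule~\ref{r:2} iteratively: Lemma~\ref{lem:R2} guarantees that after inserting the chosen edge $e$, the remaining assignment to $f+e$ still satisfies all three conditions, so the rule is again applicable. The rule is engineered so that the chosen edge $e=uv$ can be embedded in $f$ without crossings — Phase~1 joins distinct connected components incident to $f$, and Phase~2 picks $v$ by a left- or right-first search along the boundary of $f$, so that $e$ splits $f$ into two subfaces one of which carries all remaining assigned valencies and is used as $f+e$ in the next iteration. Each iteration eliminates at least one assigned valency, so the procedure terminates with a planar augmentation $W_f$ realizing $A$ for~$f$. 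Doing this for every face and invoking Lemma~\ref{lem:global-local} produces a planar realization $W$ of $A$, completing sufficiency.

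For the $O(n)$ running time, since the embedded graph has $O(n)$ vertices, edges, and faces, the total number of free valencies is $O(n)$, and hence so is the total number of Rule~\ref{r:2} invocations summed over all faces. I would preprocess each face $f$ in time linear in its boundary length to identify the connected components incident to $f$, the current value of $\kmax$ with a representative maximum indicator set, and the longest path or cycle of assigned degree-2 vertices incident to two faces. Using a doubly linked representation of the boundary of $f$, a component counter, and a small constant-size table tracking $\kmax \in \{0,1,2,3,4\}$, each application of Rule~\ref{r:2} — including locating $u$ and $v$, inserting $e$, and updating all data structures — can be done in amortized $O(1)$ time, yielding $O(n)$ in total. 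The main obstacle will be the update bookkeeping after an edge insertion: an endpoint may reach degree~3 and thereby merge two adjacent runs of degree-2 boundary vertices, potentially changing the longest path/cycle and $\kmax$. Performing these updates locally, rather than rescanning the face, is the technical point that has to be checked carefully to keep the amortized cost constant.
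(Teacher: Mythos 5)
Your proof follows the paper's argument exactly: sufficiency by iterating Rule~\ref{r:2} face by face, with Lemma~\ref{lem:R2} guaranteeing the three conditions persist after each insertion, then gluing the per-face realizations via Lemma~\ref{lem:global-local}, and the $O(n)$ bound obtained by keeping a circular list of the sub-degree-3 vertices on each face boundary so that the left/right-first searches take $O(1)$ time. The only difference is cosmetic: the paper treats necessity (in particular of the planarity condition) as immediate from the preceding discussion rather than spelling out your chord-counting argument.
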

%
\begin{proof}
  We construct the planar realization for each face individually.  To
  construct a local realization for a face with a positive number of
  assigned vertices, we repeatedly apply Rule~\ref{r:2} to select an
  edge.  By Lemma~\ref{lem:R2} this yields a planar local realization.
  It is not hard to see that repeatedly applying Rule~\ref{r:2} for a
  face~$f$ can be done in time proportional to the number of vertices
  incident to~$f$.  To allow fast left-first and right-first searches,
  we maintain a circular list containing the vertices incident to~$f$
  with degree less than~3, and remove vertices reaching degree~3 from
  this list.  Thus, also in phase~2 of Rule~\ref{r:2} the second
  vertex can be found in~$O(1)$ time.
\end{proof}

\subsection{Globally Realizable Node Assignments and Planarity}

In this section we show how to compute a node assignment that is
realizable in a planar way if one exists.  By
Theorem~\ref{thm:planar-realizable}, this is equivalent to finding a
node assignment satisfying for each face the parity, matching, and
planarity condition.  In a first step, we show that the planarity
condition can be neglected as an assignment satisfying the other two
conditions can always be modified to additionally satisfy the
planarity condition.

\newcommand{\lemmaMatchingPlanarity}{Given a node assignment $A$ that
  satisfies the parity and matching condition for all faces, a node
  assignment $A'$ that additionally satisfies the planarity condition
  can be computed in $O(n)$ time.}

\begin{lemma}
  \label{lem:matching-planarity}
  \lemmaMatchingPlanarity
\end{lemma}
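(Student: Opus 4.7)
The plan is to iteratively modify $A$ by local reassignments that preserve the parity and matching condition while eliminating planarity violations one by one. The key observation is that every degree-$2$ vertex appearing in a path or cycle of the planarity condition is incident to \emph{both} of the two faces $f$ and $f'$ defined by that path/cycle, so by definition of a valid node assignment it may be assigned to either of them. Reassigning an even number of such vertices from $f$ to $f'$ changes the parity count on each side by an even amount, so parity is preserved automatically.

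For each face $f$ on which the planarity condition fails, identify a longest witness $\pi = v_1,\dots,v_k$ (a path, respectively a cycle, of assigned degree-$2$ vertices incident to two faces). Let $f'$ be the other face. Compute the number $m$ of valencies in $f$ outside $\pi$; by assumption the demand $k$ of $\pi$ exceeds $m$ (and no further connected component is incident to $f$). Reassign a contiguous middle block of $\pi$ of even size close to $(k-m)/2$ from $f$ to $f'$. The remaining pieces in $f$ have total length at most $m$, so each one individually is shorter than the valencies available in $f$, and the new piece in $f'$ is at most half the original length of $\pi$, which combined with what is already available in $f'$ will satisfy the planarity condition there. A maximum-length argument then ensures that the planarity condition for $f$ (and $f'$, after the corresponding analysis) holds and that we do not re-introduce the same violation.

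The main obstacle is verifying that the matching condition is preserved in both $f$ and $f'$. Indicator sets have at most three vertices, so they interact only with the boundary of the reassigned block. Interior vertices of a path of $k>2$ degree-$2$ vertices can participate only as pairs (type~\ref{prop:adj_deg2}) or as joker-like degree-$2$ vertices, whose demands are absorbed by the parity and matching conditions. By always keeping the two extreme vertices $v_1,v_k$ assigned to $f$ (so the boundary situation in $f$ is unchanged outside $\pi$) and choosing the block so it begins and ends at a pair-boundary, we ensure that no new indicator set of higher demand appears in $f'$ and no existing matching witness in $f$ is destroyed. Lemma~\ref{lem:max-indicator} then reduces the check to examining the (bounded-size) maximum indicator sets touching the modified region.

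For complexity, observe that each degree-$2$ vertex is reassigned at most once if we process faces in a way that always moves vertices from the side with excess demand to the side with surplus capacity. Longest paths/cycles along double-face boundaries can be precomputed in $O(n)$ by traversing each such boundary arc once, and each reassignment is done in time proportional to the block size, charged to the vertices moved. Summed over all reassignments this is $O(n)$, matching the stated bound.
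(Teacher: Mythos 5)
There is a genuine gap in your treatment of the target face $f'$, and it stems from your choice of \emph{which} vertices to reassign. You move a contiguous middle block of $\pi$ into $f'$. That block consists of mutually adjacent degree-2 vertices, so none of them can be matched to one another; in $f'$ they form a new pair, or a new path of assigned degree-2 vertices incident to two faces, and hence impose fresh demands on $f'$ under both the matching condition (a pair demands two outside valencies) and the planarity condition (a path of length $>2$ demands $k$ further valencies or a valency from another component). Nothing in your argument guarantees $f'$ can absorb these demands: $f'$ may have had no valencies assigned to it at all before the reassignment, in which case moving a block of four adjacent degree-2 vertices into $f'$ immediately violates the matching condition there. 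Your sentence ``the new piece in $f'$ is at most half the original length of $\pi$, which combined with what is already available in $f'$ will satisfy the planarity condition there'' is precisely the step that fails; ``what is already available in $f'$'' can be nothing. This also undermines your termination/complexity argument, since fixing $f$ can now genuinely break $f'$ and force further reassignments.

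The paper's proof sidesteps this entirely by reassigning exactly two \emph{non-adjacent} vertices of $\pi$, namely $u=v_1$ and a vertex $v$ near the middle ($v_3$ if $k=3$, $v_{\lceil(k+1)/2\rceil}$ otherwise). Because $u$ and $v$ are non-adjacent and both lie on the boundary of $f'$, any (planar) realization $M$ for $f'$ extends to $M\cup\{uv\}$, so all conditions for $f'$ are preserved verbatim --- no analysis of $f'$'s prior surplus is needed, and a single pass over the faces suffices for the $O(n)$ bound. On the $f$ side, removing $v_1$ and the middle vertex splits $\pi$ into two subpaths of lengths $\lceil(k-2)/2\rceil$ and $\lfloor(k-2)/2\rfloor$ that satisfy each other's demands (with one extra valency from parity when $k$ is odd), and any set $T$ disjoint from $\pi$ demands at most $k-1$ valencies, which $\pi$'s remaining $k-2$ valencies plus parity still provide. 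If you want to salvage your approach, replace the contiguous block by a set of pairwise non-adjacent vertices of $\pi$ that can be perfectly matched among themselves inside $f'$ --- at which point you have essentially rederived the paper's two-vertex reassignment.
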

\begin{proof}
  Assume that~$f$ is a face for which the planarity condition is not
  satisfied, and let~$\pi=v_1,\dots,v_k$ denote a largest path (or
  cycle) of degree-2 vertices, all assigned to~$f$, that violates the
  planarity condition.  Let~$f'$ denote the other face (distinct
  from~$f$) incident to~$\pi$.  Let~$u =v_1$.  Choose~$v=v_3$
  if~$k=3$, and $v = v_{\lceil (k+1)/2 \rceil}$ otherwise.  We
  modify~$A$ by reassigning $u$ and~$v$ to~$f'$.  We claim that this
  reassignment has two properties, namely 1) $f'$ satisfies exactly
  the same conditions as before the reassignment, and 2) $f$ satisfies
  the parity condition, the matching condition and the planarity
  condition.

  Note that since~$\pi$ is either a path of length more than~2 or a
  cycle of length more than~3, the two vertices~$u$ and~$v$ are
  distinct and non-adjacent.  To see property 1) consider the new
  assignment.  Obviously, the reassignment preserves the parity
  condition.  For the matching condition assume that~$M$ is an
  augmentation of~$f'$ with respect to~$A$.  Then~$M \cup \{uv\}$ is
  an augmentation of~$f'$ with respect to~$A'$, thus the matching
  condition is preserved.  Moreover, if~$M$ is a planar augmentation,
  then~$uv$ can be added in a planar way, showing that also the
  planarity condition is preserved.

  Concerning property 2), the reassignment obviously preserves the
  parity condition for~$f$.  For the matching and the planarity
  condition assume that there exists a set~$T$ of vertices assigned to
  $f$ that demand~$k'$ additional free valencies by either the
  matching condition or the planarity condition.  First observe
  that~$k' \le k-1$, as~$\pi$ would not have violated the planarity
  condition, otherwise.  If~$k' = k-1$, then $T$ is disjoint
  from~$\pi$, which provides~$k-2$ free valencies (recall that~$u$
  and~$v$ have been reassigned), and the parity condition implies the
  existence of an additional free valency assigned to~$f$, thus
  ensuring that the demand of~$T$ is satisfied.  The same argument
  works for all cases where~$T$ is disjoint from~$\pi$.  Thus assume
  that~$T$ and~$\pi$ are not disjoint.  Since~$\pi$ was chosen as a
  maximal path or cycle, and all sets with demands that contain
  degree-2 vertices it follows that~$T$ is a subset of~$\pi$.  Note
  that the reassignment splits $\pi$ into two disjoint subpaths~$\pi_1$
  and~$\pi_2$ consisting of~$\lceil(k-2)/2\rceil$ and~$\lfloor (k-2)/2
  \rfloor$ vertices, respectively.  Observe that~$\pi_2$, possibly
  together with an additional free valency provided by the parity
  condition (if~$k$ is odd) provides the necessary valencies
  for~$\pi_1$ and vice versa.  Thus the new assignment satisfies the
  matching condition and the planarity condition as well, and
  property~2) holds.

  Observe that once a largest path violating the planarity condition
  has been found, the reassignment for a face~$f$ takes only $O(1)$
  time.  Moreover, since we only need to consider maximal sequences of
  assigned degree-2 vertices, such a path can be found in time
  proportional to the size of~$f$.  The test whether the planarity
  condition for this path is satisfied can be performed in the same
  running time.  Thus~$A'$ can be computed from~$A$ by simply
  traversing all faces, spending time proportional to the face size in
  each face.  Thus, computing $A'$ from $A$ takes $O(n)$ time.
\end{proof}
\noindent
Lemma~\ref{lem:matching-planarity} and
Theorem~\ref{thm:planar-realizable} together imply the following
characterization.
\begin{theorem}
  $G$ admits a planar 3-regular augmentation if and only if it admits
  a node assignment that satisfies for all faces the parity and
  matching condition.
\end{theorem}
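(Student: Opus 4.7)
The theorem is an immediate combination of Lemma~\ref{lem:matching-planarity} and Theorem~\ref{thm:planar-realizable}, plus the easy necessity direction. The plan is as follows.

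For the forward direction, assume $G$ admits a planar $3$-regular augmentation $W$. Then $W$ induces a node assignment $A$ by sending each $v\in\sV$ to the face containing its incident edges in $W$. For each face $f$, the restriction $W_f$ of $W$ to $f$ is a planar realization of $A$ for $f$, so by Theorem~\ref{thm:planar-realizable} the assignment $A$ satisfies the parity, matching, and planarity conditions for every face; in particular it satisfies the parity and matching conditions, which is what the theorem asks for.

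For the reverse direction, suppose $A$ is a node assignment satisfying the parity and matching conditions for every face of $G$. Lemma~\ref{lem:matching-planarity} then yields a node assignment $A'$ that additionally satisfies the planarity condition at every face, while preserving the parity and matching conditions. By Theorem~\ref{thm:planar-realizable}, such an $A'$ admits a planar realization $W$, i.e., a planar $3$-regular augmentation of $G$. This closes the equivalence.

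The only thing one needs to double-check is that the modifications performed by Lemma~\ref{lem:matching-planarity} can indeed be applied simultaneously to all faces violating the planarity condition without interfering with each other: since each reassignment only moves vertices between two faces sharing a maximal path (or cycle) of degree-$2$ vertices, and the lemma is proved to preserve both the matching and parity conditions on both faces involved, iterating the reassignment over all offending faces yields a global $A'$ with the desired properties. There is no real obstacle here; the content of the theorem lies in Lemma~\ref{lem:matching-planarity} and Theorem~\ref{thm:planar-realizable}, which together reduce the existence of a planar $3$-regular augmentation to a purely local matching-type condition.
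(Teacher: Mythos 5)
Your proposal is correct and matches the paper exactly: the paper states this theorem as an immediate consequence of Lemma~\ref{lem:matching-planarity} and Theorem~\ref{thm:planar-realizable}, with the forward direction being the obvious necessity of the conditions and the reverse direction obtained by first repairing the planarity condition and then invoking the planar realizability characterization. Your added remark about the face-by-face reassignments not interfering is already handled inside the proof of Lemma~\ref{lem:matching-planarity} (property 1 there), so nothing further is needed.
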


To find a node assignment satisfying the parity and matching
condition, we compute a (generalized) perfect matching in the
following (multi-)graph~$G_A=(\sV,E')$, called \emph{assignment
  graph}.
It is defined on~$\sV$, and the demand of a vertex in~$\ii V$ is~$3-i$
for~$i=0,1,2$.  For a face~$f$ let~$\sV_f \subseteq \sV$ denote the
vertices incident to~$f$.  For each face~$f$ of~$G$, $G_A$ contains
the edge set~$E_f = \binom{\sV_f}{2} \setminus E$, connecting
non-adjacent vertices in~$\sV$ that share the face~$f$.  We seek a
perfect (generalized) matching~$M$ of~$G_A$ satisfying exactly the
demands of all vertices.  The interpretation is that we assign a
vertex~$v$ to a face~$f$ if and only if~$M$ contains an edge incident
to~$v$ that belongs to~$E_f$.  It is not hard to see that for each
face~$f$ the edges in~$M \cap E_f$ are a (non-planar) realization of
this assignment, implying the parity condition and the matching
condition; the converse holds too.

\newcommand{\lemmaAssignmentGraph}{A perfect matching of~$G_A$
  corresponds to a node assignment that satisfies the parity and
  matching condition for all faces, and vice versa.}

\begin{lemma}
  \label{lem:assignment-graph}
  \lemmaAssignmentGraph
\end{lemma}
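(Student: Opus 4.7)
The plan is to prove the equivalence in both directions by invoking Theorem~\ref{thm:characMatch}, which characterizes realizability for a face in terms of the parity and matching condition. The key observation is that the edge sets $E_f$ of the assignment graph partition naturally according to faces, mirroring the face-local structure exploited by Lemma~\ref{lem:global-local}.

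For the ``only if'' direction, I would start with a generalized perfect matching $M$ of $G_A$, i.e.\ a set of edges that is incident to every $v \in \ii{V}$ exactly $3-i$ times, and extract a node assignment $A$ as follows. For $v \in \0V \cup \1V$, the vertex $v$ is incident to only one face $f_v$ of $G$, so every edge of $G_A$ incident to $v$ lies in $E_{f_v}$; set $A(v) := f_v$. For $v \in \2V$, the demand is $1$, so $M$ contains exactly one edge $vw$ incident to $v$, and this edge lies in $E_f$ for a unique face $f$; set $A(v) := f$. With this assignment, for every face $f$ the set $W_f := M \cap E_f$ consists of non-$G$-edges both of whose endpoints lie on $f$, and by construction each vertex $v$ with $A(v) = f$ is incident to exactly $3 - \deg_G(v)$ edges of $W_f$. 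Hence $W_f$ is a valid realization of $A$ for $f$, and Theorem~\ref{thm:characMatch} implies that $A$ satisfies the parity and matching condition for $f$.

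For the converse, suppose $A$ satisfies the parity and matching conditions for every face. Theorem~\ref{thm:characMatch} supplies, for each face $f$, a realization $W_f$ for $f$. I set $W := \bigcup_{f \in F} W_f$. Each edge of $W_f$ connects two vertices of $\sV$ that are incident to $f$ and non-adjacent in $G$, so $W_f \subseteq E_f$, giving $W \subseteq E'$. Since the $W_f$ live in disjoint faces and a vertex in $\0V \cup \1V$ is incident to only one face, the $W_f$ are edge-disjoint, and $W$ matches every $v \in \ii{V}$ with exactly $3-i$ edges of $E'$. Thus $W$ is a generalized perfect matching of $G_A$ satisfying the demands exactly, and the assignment it induces coincides with $A$.

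The main subtlety is that the node assignment extracted from $M$ in the first direction is well-defined as a \emph{function} $\sV \to F$: this needs the observation that any vertex in $\0V \cup \1V$ is incident to a unique face, so there is no ambiguity, while any vertex in $\2V$ has demand $1$ and is therefore matched exactly once. Once this is in hand, both directions reduce to a face-by-face application of Theorem~\ref{thm:characMatch}, and no further combinatorial work is required.
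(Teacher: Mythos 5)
Your proof is correct and follows essentially the same route as the paper: both directions are reduced face by face to Theorem~\ref{thm:characMatch}, using that $M \cap E_f$ is exactly a realization of the induced assignment for $f$ and, conversely, that the realizations $W_f \subseteq E_f$ glue to a perfect matching. The extra care you take with well-definedness of the extracted assignment is a welcome but minor elaboration of what the paper already notes when defining induced node assignments.
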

\begin{proof}
  First assume that~$M$ is a perfect matching of~$G_A$, and let~$A$ be
  the corresponding assignment.  Observe that for each face~$f$, the
  edge set~$E_f \cap M$ is exactly a realization of~$A$ for~$f$, and
  hence, by Theorem~\ref{thm:characMatch},~$A$ satisfies the parity
  condition and the matching condition for~$f$.  Conversely, again by
  Theorem~\ref{thm:characMatch}, for a node assignment~$A$ that
  satisfies the parity condition and the matching condition for each
  face~$f$, we find a realization $W_f$ for each face.  Note that by
  definition of~$E_f$ we have~$W_f \subseteq E_f$, and
  thus~$\bigcup_{f \in \F} W_f$ yields a perfect matching of~$G_A$
  inducing~$A$.
\end{proof}
%
Since testing whether the assignment graph admits a perfect matching
can be done in $O(n^{2.5})$ time~\cite{g-ertdc-83}, this immediately
implies the following theorem.
\newcommand{\theoremFERA}{\faug can be solved in~$O(n^{2.5})$ time.}
\begin{theorem}
  \label{thm:fera}
  \theoremFERA
\end{theorem}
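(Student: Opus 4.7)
The plan is to solve \faug by reducing it to a generalized (degree-constrained) matching problem on the assignment graph, and then to post-process the matching into an explicit planar augmentation using the constructive results established above. Correctness will follow essentially for free from the characterizations already proved; the only real work is accounting for the running time.

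First, I would construct the assignment graph $G_A=(\sV,E')$. Its vertex set has size $O(n)$, and demands $3-i$ for vertices in $\ii V$ are at most~$3$. The edges are grouped by faces: for each face~$f$ we include $\binom{\sV_f}{2}\setminus E$. Although the planar structure of~$G$ bounds the sum of face sizes by $O(n)$, a single face may contain $\Omega(n)$ assigned vertices, so $|E'|$ can be as large as $\Theta(n^2)$ in the worst case. Building $G_A$ explicitly therefore takes $O(n^2)$ time, which is within budget.

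Next, I would run Gabow's algorithm~\cite{g-ertdc-83} for perfect generalized matching, which handles bounded vertex demands and runs in $O(n^{2.5})$ time. By Lemma~\ref{lem:assignment-graph}, $G_A$ admits a perfect generalized matching if and only if $G$ admits a node assignment satisfying the parity and matching conditions for every face; by the immediately preceding theorem, this is in turn equivalent to the existence of a planar 3-regular augmentation. Hence the matching test already decides \faug.

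To actually produce the augmentation, I would extract the node assignment~$A$ induced by the matching (reading off, for each vertex $v$, the faces to which its matching edges belong), apply Lemma~\ref{lem:matching-planarity} to transform $A$ into an assignment $A'$ that additionally satisfies the planarity condition in $O(n)$ time, and finally apply Theorem~\ref{thm:planar-realizable} (i.e.\ iterate Rule~\ref{r:2} face by face, maintaining circular lists of still-assigned vertices for fast left/right-first searches) to output a concrete planar augmentation in $O(n)$ time. The overall running time is dominated by the matching step, giving the claimed $O(n^{2.5})$ bound.

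The main obstacle is the matching computation: one must invoke a generalized matching algorithm that respects per-vertex demands $\le 3$ and runs in $O(n^{2.5})$. Everything else is a routine composition of earlier results, with the small bookkeeping point that the data structures for Rule~\ref{r:2} must allow constant-time left- and right-first searches so that the postprocessing does not exceed linear time per face.
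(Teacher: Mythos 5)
Your proposal matches the paper's own proof essentially step for step: build the assignment graph in $O(n^2)$ time, decide existence via Gabow's generalized matching algorithm in $O(n^{2.5})$ time using Lemma~\ref{lem:assignment-graph}, then post-process with Lemma~\ref{lem:matching-planarity} and Theorem~\ref{thm:planar-realizable} to produce the planar realization in $O(n)$ time. The argument is correct and there is no substantive difference from the paper's proof.
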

\begin{proof}
  For a given planar input graph~$G$ with $n$ vertices we first
  construct the assignment graph~$G_A=(\sV,E')$ in~$O(n^2)$ time.  We
  then check whether~$G_A$ admits a perfect matching in
  $O(\sqrt{|\sV|}|E'|)= O(n^{2.5})$ time, using an algorithm due to
  Gabow~\cite{g-ertdc-83}.  If no perfect matching exists, then~$G$
  does not admit a planar 3-regular augmentation by
  Lemma~\ref{lem:assignment-graph}.  Otherwise, we obtain by the same
  lemma a node assignment~$A$ that satisfies the parity condition and
  the matching condition for each face.  Using
  Lemma~\ref{lem:matching-planarity} we obtain in~$O(n)$ time a node
  assignment~$A'$ that additionally satisfies the planarity condition
  for each face.  A corresponding planar realization of~$A'$ can then
  be obtained in $O(n)$ time by Theorem~\ref{thm:planar-realizable}.
\end{proof}

\section{$C$-connected \faug}
\label{sec:planar-fixed-conn}
In this section we generalize the results obtained for $\faug$ to efficiently solve $c$-connected $\faug$ for $c = 1,2$. 
The triconnected case is shown to be NP-hard in Section~\ref{sec:tricon}.
We start with the connected case.

\subsection{Connected $\faug$}
\label{sec:connected}
Observe that an augmentation makes~$G$ connected if and only if in
each face all incident connected components are connected
by the augmentation.  We characterize the node
assignments admitting such \emph{connected realizations} and modify the assignment graph from the previous section to yield such
assignments.

Let $G=(V,E)$ be a planar graph with a fixed planar embedding, let~$f$ be a face of~$G$, and let~$z_f$ denote the number of connected components
incident to~$f$.  Obviously, an augmentation connecting all these
components must contain at least a spanning tree on these components,
which consists of~$z_f-1$ edges.  Thus the following \emph{connectivity condition} is 
necessary for a node assignment to admit a connected
realization for~$f$.

\begin{condition}[connectivity] 
\begin{compactenum}[(1)]
\itemsep=0ex
\item If~$z_f > 1$, each connected component incident to~$f$ must have
  at least one vertex assigned to~$f$.
\item The number of valencies assigned to~$f$ must be at least~$2z_f-2$.
 \end{compactenum}
\end{condition}
%
  It is not difficult to see that this condition is also sufficient
  (both in the planar and in the non-planar case) since both
  Rule~\ref{r:1} and Rule~\ref{r:2} gives us freedom to choose the
  second vertex~$v$ arbitrarily. We employ this degree of freedom
  to find a connected augmentation by choosing~$v$ in a connected
  component distinct from the one of~$u$, which is always possible due
  to the connectivity condition.

  \newcommand{\theoremConnAug}{There exists a connected realization
    $W$ of $A$ if and only if~$A$ satisfies the parity, matching, and
    connectivity condition for all faces. Moreover, $W$ can be chosen
    in a planar way if and only if~$A$ additionally satisfies the
    planarity condition for all faces.  Corresponding realizations can
    be computed in~$O(n)$ time.}

\begin{theorem}
  \label{thm:conn-aug}
  \theoremConnAug
\end{theorem}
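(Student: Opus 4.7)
The plan is to leverage Theorem~\ref{thm:planar-realizable} by refining Rule~\ref{r:2} so that its Phase~1 edges additionally make the augmentation connected, without disturbing the parity, matching, or planarity invariants already verified in Lemma~\ref{lem:R2}.

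Necessity of the connectivity condition is routine. A connected realization must, when restricted to each face~$f$, link the $z_f$ incident components; this requires at least $z_f-1$ augmentation edges and thus $2z_f-2$ assigned valencies, and every component must contain at least one assigned vertex, otherwise it cannot be reached.

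For sufficiency, the plan is to keep Rule~\ref{r:2} as stated but to tighten the freedom it leaves for picking the partner~$v$ in Phase~1: whenever $z_f>1$, pick $v$ in a component distinct from the one containing~$u$, preferring a component that contributes at least two assigned valencies. Each Phase~1 insertion then merges two components, decreases $z_f$ by one, and consumes exactly two valencies, so the count $\sum_i v_i \geq 2z_f-2$ from connectivity condition~(2) is preserved automatically. The hard part will be condition~(1) after the merge, namely showing that the new super-component still carries an assigned vertex whenever further components remain. This reduces to a short counting argument: if every remaining component contributed only one valency, then $\sum_i v_i = z_f$, and combined with $2z_f-2 \leq \sum_i v_i$ this forces $z_f\leq 2$, in which case the merge terminates Phase~1 and Phase~2 takes over. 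When Phase~1 subrule~1 forces $u$ to be the middle of a long path, the path itself (of length more than $\kmax \leq 4$) provides residual valencies in $u$'s super-component after the merge, satisfying condition~(1); a small enumeration reminiscent of Lemma~\ref{lem:matching6} handles the remaining corner cases.

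Once $z_f=1$ for every face, the remainder of the construction is exactly that of Theorem~\ref{thm:planar-realizable}, yielding a planar realization when the planarity condition is imposed; for the non-planar variant the identical cross-component refinement of Rule~\ref{r:1} works, since the freedom to choose~$v$ was already built into that rule. The running time remains $O(n)$: as in Theorem~\ref{thm:planar-realizable}, the work per face is proportional to its size, and the leaf-first choice of components can be implemented by maintaining a list of components sorted by their current assigned-valency count per face, with $O(1)$ amortized overhead per edge insertion.
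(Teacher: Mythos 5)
Your proposal is correct and follows essentially the same route as the paper: both proofs exploit the freedom in Rules~\ref{r:1} and~\ref{r:2} to choose the partner~$v$ in a different connected component, observe that condition~(2) is preserved automatically since each cross-component edge consumes two valencies while decreasing $z_f$ by one, and use the counting argument that for $z_f>2$ some component assigns at least two valencies so the merged component retains an assigned vertex, with Lemmas~\ref{lem:R1} and~\ref{lem:R2} carrying the parity, matching, and planarity invariants. The extra case analysis you sketch for Phase~1 subrule~1 is subsumed by that same counting argument and is not needed.
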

%
\begin{proof}
  Clearly the conditions for both statements are necessary.  We prove
  that they are also sufficient.  Let~$A$ be a node assignment
  satisfying the parity condition, the matching condition, and the
  connectivity condition for all faces of~$G$.  We construct a
  connected realization of~$A$ for each face~$f$; together they form a
  connected realization of~$A$.

  To construct a connected (possibly non-planar) realization for~$f$,
  we repeatedly choose edges according to Rule~\ref{r:1} (which yields
  a realization by Lemma~\ref{lem:R1}), making use of the freedom in
  the rule to reduce the number of connected components.
  Rule~\ref{r:1} prescribes one endpoint~$u$ of the edge that will be
  selected, and we are free to choose~$v\in\sV$ arbitrarily, as long
  as it is not incident to~$u$. We then choose~$v$ in a connected
  component different from the one containing~$u$ as long as several
  connected components exist.  While the number~$z_f$ of connected
  components incident to~$f$ is greater than~2, there exists a
  connected component assigning at least two valencies to~$f$, due to
  connectivity condition~(2).  We choose~$v$ such that at least one
  of~$u$ and~$v$ is contained in such a connected component.  We then
  consider the node assignment~$A$ for~$f+e$.  By
  Lemma~\ref{lem:R1}~$A$ satisfies the parity condition and the
  matching condition for~$f+e$.  Moreover, our choice of~$v$ ensures
  that after adding the edge~$uv$ determined by the rule, connectivity
  condition~(1) is satisfied for the resulting connected component.
  Connectivity condition~(2) is trivially preserved, showing that~$A$
  satisfies the connectivity condition for~$f+e$.  Thus, the
  construction can be repeated, eventually yielding a connected
  realization for~$f$.  The planar case works completely analogously,
  using Rule~\ref{r:2} and Lemma~\ref{lem:R2} instead of
  Rule~\ref{r:1} and Lemma~\ref{lem:R1}.  The running time can be
  argued as in the proof of Theorem~\ref{thm:planar-realizable}.
\end{proof}
%
The following corollary follows from Theorem~\ref{thm:planar-realizable} by showing that the reassignment which establishes the
planarity condition preserves the connectivity condition.
\newcommand{\corConnectedPlanarRealizable}{Given a node assignment $A$
  that satisfies the parity, matching and connectivity condition for
  all faces, a node assignment $A'$ that additionally satisfies the
  planarity condition can be computed in $O(n)$ time.}
\begin{corollary}
  \label{cor:connected-planar-realizable}
  \corConnectedPlanarRealizable
\end{corollary}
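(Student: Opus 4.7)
The plan is to invoke Lemma~\ref{lem:matching-planarity} to compute from $A$ a node assignment $A'$ satisfying the parity, matching, and planarity conditions in $O(n)$ time, and to argue that the reassignment step used inside that lemma also preserves the connectivity condition. The procedure in the lemma repeatedly identifies a face $f$ where a longest path or cycle $\pi = v_1,\ldots,v_k$ of assigned degree-2 vertices violates the planarity condition, and reassigns two specific non-adjacent vertices $u,v \in \pi$ from $f$ to the unique other face $f'$ incident to $\pi$. It hence suffices to verify that a single such reassignment preserves the connectivity condition for both $f$ and $f'$; the global statement then follows by iteration.

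The key observation driving the argument is that whenever the planarity condition is violated for some $\pi$ on a face $f$ and $A$ satisfies the connectivity condition on~$f$, then $z_f = 1$. Indeed, a violation of the planarity condition implies in particular that no valency from a connected component distinct from the one containing $\pi$ is assigned to $f$. Connectivity condition~(1), which requires every component incident to $f$ to contribute at least one assigned vertex whenever $z_f > 1$, then forces $z_f = 1$.

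Given this, both parts of the connectivity condition survive at $f$ even after losing the two valencies at $u$ and $v$: condition~(1) becomes vacuous and condition~(2) reduces to requiring $2z_f - 2 = 0$ valencies. For $f'$ the reassignment only adds two valencies and two assigned vertices; since the connected components of $G$ are unchanged, condition~(1) remains satisfied (no component loses its assigned vertices) and condition~(2) remains satisfied (the valency count on $f'$ strictly grows). Thus each reassignment preserves the connectivity condition globally, and the iterated procedure produces the desired $A'$.

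The main obstacle I expect is pinning down the conceptual lemma ``violation of planarity on $f$ together with connectivity on $f$ implies $z_f = 1$''; once that is isolated, the remaining verifications at both $f$ and $f'$ are immediate. The $O(n)$ running time is inherited directly from Lemma~\ref{lem:matching-planarity}, since the extra bookkeeping consists only of constant-time checks per reassignment.
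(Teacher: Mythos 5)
Your reduction to ``one reassignment preserves connectivity at both $f$ and $f'$'' is the right frame, and your treatment of $f'$ is fine, but the key observation you build the $f$-side on is false in the cycle case. For a \emph{path} $\pi$ the planarity demand can be met by a single valency from a different connected component, so a violation indeed implies no other component assigns anything to $f$, and connectivity condition~(1) then forces $z_f=1$. For a \emph{cycle} $\pi$ of $k>3$ assigned degree-2 vertices, however, $\pi$ is its own connected component and the alternative demand is \emph{two} valencies from \emph{two distinct} components different from $\pi$. With $z_f=2$ there is only one other component, so that route is unavailable no matter what, and the cycle violates planarity as soon as the other component supplies fewer than $k$ valencies --- e.g.\ a $4$-cycle of assigned degree-2 vertices plus one other component assigning two valencies satisfies parity, matching, and both connectivity conditions, yet violates planarity. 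So $z_f=2$ can occur, your claim that condition~(1) becomes vacuous and condition~(2) reduces to $0$ valencies does not apply there, and the proof has a genuine gap.

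The case is not hard to close, and the paper does it differently: it first notes that for $z_f>2$ connectivity condition~(1) already implies the planarity condition (two other components each contribute a valency), so reassignment only ever happens for $z_f\in\{1,2\}$; for $z_f=1$ connectivity is trivial; and for $z_f=2$ condition~(2) is implied by condition~(1), while condition~(1) survives because the reassignment removes only two of the at least three (path) or four (cycle) vertices of $\pi$, hence never strips a connected component of its last vertex assigned to $f$. You should replace your ``$z_f=1$ always'' lemma with this $z_f=2$ argument (or restrict your observation to paths and handle cycles separately).
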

\begin{proof}
  To see this, recall that Theorem~\ref{thm:planar-realizable}
  reassigns from each face at most two vertices to a distinct face if
  the planarity condition is not satisfied.  Clearly, assigning more
  vertices to a face does not invalidate the connectivity condition.
  Thus, an invalidation of the connectivity condition for a face~$f$
  may only happen when two vertices assigned to~$f$ are reassigned to
  a different face.  Note that if~$z_f > 2$, the planarity condition
  is implied by connectivity condition (1).  Thus a reassignment only
  happens for faces with~$z_f=1,2$.  If~$z_f=1$, the connectivity
  condition holds trivially.  If~$z_f=2$, observe that connectivity
  condition (2) is implied by condition (1), and since the
  reassignment does not reassign the last valency of a connected
  component, connectivity condition (1) is preserved
\end{proof}
Corollary~\ref{cor:connected-planar-realizable} and
Theorem~\ref{thm:conn-aug} together imply the following
characterization.
\begin{theorem}
  $G$ admits a connected planar 3-regular augmentation iff it admits a
  node assignment that satisfies the parity, matching and connectivity
  condition for all faces.
\end{theorem}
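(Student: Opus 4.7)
The plan is to prove the theorem by combining Theorem~\ref{thm:conn-aug} with Corollary~\ref{cor:connected-planar-realizable}. The two directions are essentially independent, and neither should require a fundamentally new argument since the machinery is already in place.

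For the forward direction, I would start with a connected planar 3-regular augmentation $W$ of $G$. Since $W$ is in particular a valid 3-regular augmentation that can be embedded planarly into the fixed embedding of $G$, it induces a node assignment $A$ in the usual way: each vertex $v \in \sV$ is mapped to the face of $G$ into which its incident edges from $W$ are embedded (well-defined for vertices of degree $0$ and $1$ since they have a unique incident face; for degree-$2$ vertices it is well-defined because the two incident edges of $W$ lie in the face the assignment names). By the ``only if'' direction of Theorem~\ref{thm:conn-aug}, this assignment $A$ satisfies the parity, matching, and connectivity conditions for every face of $G$ (it even satisfies the planarity condition, but we do not need this here).

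For the backward direction, assume $G$ admits a node assignment $A$ satisfying the parity, matching, and connectivity conditions for all faces. By Corollary~\ref{cor:connected-planar-realizable}, from $A$ we obtain a new node assignment $A'$ that additionally satisfies the planarity condition for every face while still satisfying the other three conditions. Then all four face-conditions (parity, matching, connectivity, planarity) hold for $A'$, and the ``if'' direction of Theorem~\ref{thm:conn-aug} yields a connected planar 3-regular realization $W$ of $A'$, which is precisely a connected planar 3-regular augmentation of $G$.

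There is essentially no obstacle: the whole content of the theorem is that dropping the planarity condition from the hypothesis of Theorem~\ref{thm:conn-aug} is harmless, and Corollary~\ref{cor:connected-planar-realizable} is exactly the statement that lets us reinstate the planarity condition without losing any of the other three. The only thing to be slightly careful about is that the assignment induced by a degree-$2$ vertex in the forward direction is well-defined; this is already implicit in the discussion preceding Lemma~\ref{lem:global-local}. The proof can therefore be written in just a few lines.
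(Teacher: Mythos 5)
Your proof is correct and is exactly the paper's argument: the paper derives this theorem in one line by combining Corollary~\ref{cor:connected-planar-realizable} (to reinstate the planarity condition) with Theorem~\ref{thm:conn-aug} (the characterization of connected realizations), which is precisely the decomposition you use. No gaps.
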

We describe a modified assignment graph, the \emph{connectivity
  assignment graph}~$G_A'$, whose construction is such that there is a
correspondence between the perfect matchings of~$G_A'$ and node
assignments satisfying the parity, matching and connectivity
condition.

To construct the connectivity assignment graph a more detailed look at
the faces and how vertices are assigned, is necessary.  A
\emph{triangle} is a cycle of three degree-2 vertices in~$G$.  An
\emph{empty triangle} is a triangle that is incident to a face that
does not contain any further vertices.  The set~$\Vin$ (for inside)
contains all vertices from $\0{V}\cup\1{V}$, all degree-2 vertices
incident to bridges (they are all incident to only a single face), and
all vertices of empty triangles (although technically they are
incident to two faces, no augmentation edges can be embedded on the
empty side of the triangle).  We call the set of remaining
vertices~$\Vbound$ (for boundary).  We construct a preliminary
assignment~$\preA$ that assigns the vertices in the set~$\Vin$ of~$G$
whose assignment is basically unique.  The remaining degree of freedom
is to assign vertices in~$\Vbound$ to one of their incident faces.
The connectivity assignment graph~$G_A'$ again has an edge set~$E_f'$
for each face~$f$ of~$G$.  Again the interpretation will be that a
perfect matching~$M$ of~$G$ induces a node assignment by assigning
to~$f$ all vertices that are incident to edges in~$M \cap E_f'$.

\begin{figure}
  \centering
  \includegraphics{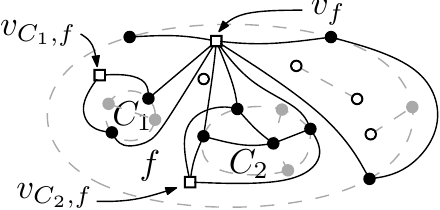}
  \caption{Graph (dashed lines; preassigned vertices are empty) and
    its connectivity assignment graph (solid lines, dummy vertices as
    boxes).}
  \label{fig:con_graph}
\end{figure}
If a face~$f$ is incident to a single connected component, we use
for~$E_f'$ the ordinary assignment graph; the connectivity condition
is trivial in this case.  Now let~$f$ be a face with~$z_f>1$ incident
connected components.  For each component~$C$ incident to~$f$ that
does not contain a vertex that is preassigned to~$f$, we add a dummy
vertex~$v_{C,f}$ with demand~1 and connect it to all degree-2 vertices
of~$C$ incident to~$f$; this ensures connectivity condition~(1).
Let~$c_f$ denote the number of these dummy vertices, and note that
there are exactly~$c_f$ valencies assigned to~$f$ due to these dummy
vertices.  Let~$\prea$ denote the number of free valencies assigned
by~$\preA$, and let~$\predem$ denote the number of valencies a maximum
indicator set in~$f$ with respect to~$\preA$ misses.  To ensure that
the necessary valencies for the matching condition are present, we
need that at least~$\predem - c_f$ vertices of~$\Vbound$ are assigned
to~$f$.  For connectivity condition~(2) we need at least~$2z_f - 2
-\prea - c_f$ such vertices assigned to~$f$.  We thus create a dummy
vertex $v_f$ whose demand is set to~$s_f = \max\{2z_f - 2 -\prea -
c_f, \predem - c_f, 0\}$, possibly increasing this demand by~1 to
guarantee the parity condition.  Finally, we wish to allow an
arbitrary even number of vertices in~$\Vbound$ to be assigned to~$f$.
Since some valencies are already taken by dummy vertices, we do not
just add to~$E_f'$ edges between non-adjacent vertices of~$\Vbound$
incident to~$f$ but for all such pairs.  The valencies assigned
by~$\preA$ and the dummy vertices satisfy the demand of any indicator
set.  Fig.~\ref{fig:con_graph} shows an example; for clarity edges
connecting vertices in~$V_b$ are omitted in~$f$ and the outer face. 

\newcommand{\lemmaConnectivityAssignmentGraph}{A perfect matching
  of~$G_A'$ (together with $\preA$) corresponds to a node assignment
  that satisfies parity, matching, and connectivity condition for all
  faces, and vice versa.}

\begin{lemma}
 \label{lem:connectivity-assignment-graph}
  \lemmaConnectivityAssignmentGraph
\end{lemma}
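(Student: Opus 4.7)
The plan is to establish the correspondence in two directions, leveraging Theorem~\ref{thm:characMatch}.

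\emph{Forward direction.} Given a perfect matching $M$ of $G_A'$, define $A(v) = \preA(v)$ for $v \in \Vin$ (naturally enforced by the edge structure of $G_A'$, which restricts every $\Vin$-edge to the preassigned face), and for each $v \in \Vbound$ (a degree-$2$ vertex, hence of demand $1$ in $G_A'$) set $A(v) = f$ for the unique face whose $E_f'$ contains the $M$-edge incident to $v$. Fix a face $f$, and let $a_f$ denote the number of $\Vbound$-vertices assigned to $f$ and $s_f^{*}$ the chosen demand of $v_f$. Because every dummy in $G_A'$ has all its neighbors among the $\Vbound$-vertices incident to $f$ and $M$ saturates each dummy by a distinct partner, we obtain $a_f \geq c_f + s_f^{*}$. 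The face-local demand balance $\prea + a_f + c_f + s_f^{*} = 2\,|M \cap E_f'|$, combined with the parity adjustment that makes $c_f + s_f^{*}$ even, yields the parity condition on $A$. Connectivity~(1) follows because the $M$-partner of every $v_{C,f}$ lies in $C$. Connectivity~(2) follows from $\prea + a_f \geq \prea + c_f + s_f^{*} \geq 2 z_f - 2$, where the second inequality uses $s_f \geq 2 z_f - 2 - \prea - c_f$. For the matching condition I apply Lemma~\ref{lem:max-indicator}: any maximum indicator set $S^{*}$ in $f$ under $A$ is either contained in the $\preA$-preassigned vertices, in which case its demand is met because $c_f + s_f^{*} \geq \predem$ forces at least $\predem$ $\Vbound$-valencies outside $S^{*}$; or $S^{*}$ contains a $\Vbound$-vertex, hence can only be a joker, pair, $3$-cycle, or branch of demand at most $3$, easily met by the abundant valencies at $f$.

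\emph{Backward direction.} Given a node assignment $A$ satisfying the three conditions, I construct a perfect matching of $G_A'$. For each face $f$, Theorem~\ref{thm:characMatch} yields a realization $W_f$ of $A$ for $f$. I convert $W_f$ into $M \cap E_f'$ by designating $c_f + s_f^{*}$ of the $\Vbound$-vertices assigned to $f$ as dummy-partners: one $\Vbound$-vertex in each component $C$ lacking a preassigned vertex (existing by connectivity~(1)) is matched to $v_{C,f}$, and $s_f^{*}$ further $\Vbound$-vertices are matched to $v_f$. The remaining demand of each designated vertex drops to $0$; I remove its $W_f$-edge(s) and replace them with the dummy edges. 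The leftover $W_f$-edges (all between non-designated vertices) lie in $E_f'$ and complete $M \cap E_f'$. Taking the union over all faces gives the desired $M$.

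\emph{Main obstacle.} The main obstacle is the backward direction, where I must secure both (i) enough $\Vbound$-vertices assigned to $f$ to serve as dummy-partners, i.e.\ $a_f \geq c_f + s_f^{*}$, and (ii) a local re-routing of $W_f$ that frees up each designated $\Vbound$-vertex. For~(i) I would unpack the matching condition of $A$ against the definition of $\predem$ (the deficiency left by $\preA$ alone), combine it with connectivity~(2), and use the parity of $c_f + s_f^{*}$; a case analysis on which term realizes the $\max$ defining $s_f$ covers all situations. For~(ii) I would invoke the flexibility underlying Rule~\ref{r:1}: whenever a designated vertex's $W_f$-partner is itself another $\Vbound$-vertex rather than a $\Vin$-vertex, an edge-swap in $W_f$ exchanges partners and produces an alternative realization freeing the designated vertex; iterating this argument yields a realization compatible with the dummy structure. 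As an alternative route, one can bypass (ii) entirely by directly verifying that $G_A'$ admits a perfect matching through a Tutte--Berge-style deficiency argument whose hypotheses reduce to the three face-conditions on $A$.
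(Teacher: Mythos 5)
Your forward direction follows the paper's outline, but the step you wave away --- ``the demand of an indicator set meeting $\Vbound$ is easily met by the abundant valencies at $f$'' --- is exactly where the construction's dummy vertices do real work, and it does not follow from abundance. For $z_f\ge 2$ the edge set $E_f'$ deliberately contains \emph{all} pairs of $\Vbound$-vertices incident to $f$, including adjacent ones, so a perfect matching could in principle assign a face nothing but a single pair matched to itself; what rules this out is connectivity condition~(1) (each component incident to $f$ contributes an assigned or preassigned vertex, so a pair, whose two vertices lie in one component, sees at least one further valency, and parity supplies a second). A $3$-cycle needs a separate structural exclusion (it is either an empty triangle, hence in $\Vin$, or one of its vertices is matched into the other incident face). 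A branch cannot meet $\Vbound$ at all, since both its vertices are incident to a bridge. Finally, faces with $z_f=1$ must be treated separately: there $E_f'$ is the ordinary assignment graph, there is no dummy $v_f$, and the conditions follow because $M\cap E_f'$ is itself a realization (Theorem~\ref{thm:characMatch}) --- your uniform treatment does not cover this case.

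Your backward direction is both over-engineered and flawed in a detail. You build $M_f$ by modifying a realization $W_f$, but the ``leftover $W_f$-edges'' need not lie in $E_f'$: a realization may contain edges incident to $\Vin$-vertices, which for a multi-component face are not even nodes of $G_A'$. More importantly, the entire re-routing machinery (your obstacle~(ii)) is unnecessary once you notice the design choice above: for $z_f\ge2$ the paper simply matches one assigned $\Vbound$-vertex of each dummy component to $v_{C,f}$, matches $s_f$ further assigned $\Vbound$-vertices to $v_f$ (enough exist by the matching condition, which gives $r\ge\predem-c_f$, and by connectivity condition~(2), which gives $r\ge 2z_f-2-\prea-c_f$), and pairs the even number of remaining assigned $\Vbound$-vertices \emph{arbitrarily} inside $\binom{X_f}{2}\subseteq E_f'$ --- no realization and no edge swaps are needed. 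Your Tutte--Berge alternative is likewise not needed. The realization $W_f$ is only invoked for the $z_f=1$ faces, where $E_f'$ is the ordinary assignment graph and an arbitrary pairing would not be admissible.
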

\begin{proof}
  Let~$M$ be a perfect matching of~$G_A'$ and let~$A$ denote the
  corresponding node assignment.  Let~$f$ be a face of~$G$, we show
  that~$A$ satisfies the parity condition, the matching condition, and
  the parity condition for~$f$.  If~$z_f=1$, the connectivity
  condition holds trivially and the remaining conditions follow from
  Theorem~\ref{thm:characMatch} since~$M \cap E_f$ is a realization
  of~$A$ for~$f$.  Hence, let~$z_f \ge 2$.

  Using the definition from above, there are~$\prea$ valencies
  assigned to~$f$ by~$\preA$,~$c_f$ valencies from vertices adjacent
  to the dummy vertices~$v_{C,f}$,~$s_f$ valencies from vertices
  incident to the dummy vertex~$v_f$ and $2k_f$ valencies from~$k_f$
  edges in $M \cap \binom{X_f}{2}$.  In total this are~$\prea + c_f +
  s_f + 2k_f$ valencies, which is even due to the choice of~$s_f$, and
  hence the parity condition holds.

  For the connectivity condition, observe that the dummy
  vertices~$v_{C,f}$ imply connectivity condition~(1) and the choice
  of~$s_f$ implies connectivity condition~(2).

  It remains to prove that the matching condition is satisfied.
  Let~$T$ denote an indicator set of~$f$ (for~$A$).  Observe that the
  vertices of an indicator set are either all in~$\Vin$ or all in
  $\Vbound$.  If~$T \subseteq \Vin$, then~$T$ was already an indicator
  set for~$\preA$, and its demand is satisfied due to the choice
  of~$s_f$.  If~$T \subseteq \Vbound$, it is a joker, a pair, or a
  3-cycle.  However, as argued before, a 3-cycle can be excluded as it
  is either contained in~$\Vin$ or one of its vertices must be matched
  to a dummy vertex in another face, and hence is not assigned to~$f$.
  For a joker the necessary valency exists due to the parity
  condition.  If~$T$ is a pair (consisting of two adjacent vertices of
  degree~2), its vertices are contained in the same connected
  component.  Since~$z_f \ge 2$ and connectivity condition (1) is
  satisfied, at least one more vertex must be assigned to~$f$.  It
  then follows from the parity condition, that the demand of~$T$ is
  satisfied.  Thus the matching condition holds, finishing this
  direction of the proof.

  Conversely, let~$A$ be a node assignment that satisfies for each
  face~$f$ the parity condition, the matching condition, and the
  connectivity condition.  We construct for each face~$f$ a
  matching~$M_f \subseteq E_f$ satisfying exactly the demands of all
  vertices assigned to~$f$ and the dummy vertices associated with~$f$.
  Clearly the matching~$M = \bigcup_{f \in \mathcal{F}} M_f$,
  where~$\mathcal{F}$ denote the set of faces of~$G$, then satisfies
  the demands of all vertices in~$G_A'$, that is it is a perfect
  matching of~$G_A'$.

  Let~$f$ be a face.  If~$z_f = 1$, we choose~$M_f$ as an arbitrary
  realization of~$A$ for~$f$, which exists by
  Theorem~\ref{thm:characMatch}, and the condition is satisfied by
  construction.  Hence assume~$z_f \ge 2$.  Connectivity condition~(1)
  implies that each connected component either contains a vertex
  in~$\Vin$ or a vertex in~$\Vbound$ assigned to~$f$.  We pick for
  each connected component~$C$ that does not contain a vertex
  in~$\Vin$ an arbitrary assigned vertex of~$\Vbound$ and match it
  to~$v_{C,f}$.  The matching condition implies that the number~$r$ of
  remaining vertices in~$\Vbound$ assigned to~$f$ is at least~$\predem
  - c_f$, and connectivity condition~(2) implies that~$r$ is at
  least~$2z_f-2-\prea-c_f$.  Thus, we can match arbitrary~$s_f$
  vertices in~$\Vbound$ to~$v_f$, satisfying its demand.  The
  remaining yet-unmatched vertices assigned to~$f$ are an even number
  and an arbitrary pairing of them completes the matching~$M_f$.
\end{proof}

Together with the previous observations this directly implies
an algorithm for finding connected 3-regular augmentations.

\begin{theorem}
  Connected \faug can be solved in~$O(n^{2.5})$ time.
\end{theorem}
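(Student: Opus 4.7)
The plan is to mirror the proof of Theorem~\ref{thm:fera}, replacing the ordinary assignment graph $G_A$ with the connectivity assignment graph $G_A'$ introduced in this section. First I would construct $G_A'$ explicitly in $O(n^2)$ time: this involves computing the preliminary assignment $\widetilde{A}$ of the vertices in $V_{\mathrm{in}}$, identifying for each face $f$ its incident connected components, creating the dummy vertices $v_{C,f}$ and $v_f$ with the demands prescribed in the definition (including the possible parity adjustment of $s_f$), and inserting the edge sets $E_f'$. The total number of dummy vertices is $O(n)$, since the number of face--component incidences in a planar graph is $O(n)$, and $|E(G_A')|= O(n^2)$ because $\sum_f |V_f|^2 \leq (\sum_f |V_f|)^2 = O(n^2)$.

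Next I would run Gabow's generalized matching algorithm~\cite{g-ertdc-83} on $G_A'$ in time $O(\sqrt{|V(G_A')|}\,|E(G_A')|) = O(n^{2.5})$. If no perfect matching exists, then by Lemma~\ref{lem:connectivity-assignment-graph} there is no node assignment satisfying the parity, matching, and connectivity conditions for all faces, hence by Theorem~\ref{thm:conn-aug} $G$ admits no connected planar 3-regular augmentation and we can report infeasibility. Otherwise, the matching together with $\widetilde{A}$ yields, again by Lemma~\ref{lem:connectivity-assignment-graph}, a node assignment $A$ that satisfies all three conditions on every face.

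Finally I would invoke Corollary~\ref{cor:connected-planar-realizable} in $O(n)$ time to refine $A$ into an assignment $A'$ that additionally satisfies the planarity condition on every face (while still satisfying the other three), and then apply the constructive part of Theorem~\ref{thm:conn-aug} to produce the desired connected planar 3-regular augmentation in $O(n)$ additional time by iterating Rule~\ref{r:2} face by face. Since the matching step dominates, the total running time is $O(n^{2.5})$ as claimed.

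The main obstacle is really only the bookkeeping for $G_A'$: one has to make sure that the modified construction (with its dummy vertices and variable demands) still has $O(n)$ vertices and $O(n^2)$ edges and can indeed be built in $O(n^2)$ time, so that Gabow's bound carries over unchanged. The correctness of the reduction itself has already been established in Lemma~\ref{lem:connectivity-assignment-graph}, Corollary~\ref{cor:connected-planar-realizable}, and Theorem~\ref{thm:conn-aug}, so once the size bounds are verified the result follows directly.
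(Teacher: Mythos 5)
Your proposal is correct and follows essentially the same route as the paper, which states that the theorem ``directly'' follows from the preceding results and whose explicit template is the proof of Theorem~\ref{thm:fera}: build the connectivity assignment graph, run Gabow's matching algorithm, and combine Lemma~\ref{lem:connectivity-assignment-graph}, Corollary~\ref{cor:connected-planar-realizable}, and Theorem~\ref{thm:conn-aug}. Your added verification of the size bounds for $G_A'$ is a reasonable and accurate elaboration of the bookkeeping the paper leaves implicit.
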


\subsection{Biconnected \faug}
\label{sec:biconnected-appendix}

In this section we show that also biconnected \faug can be solved
efficiently.  Again, we first give a local characterization of node
assignments admitting biconnected augmentations and then construct a
\emph{biconnectivity assignment graph} whose perfect matchings
correspond to such node assignments.

\paragraph{Local characterization of biconnectivity.}

Let~$G=(V,E)$ be a planar graph with a fixed embedding and let~$f$ be
a face of~$G$.  We consider the \emph{bridge forest}~$B_f$ of~$f$,
which is constructed as follows.  Remove all bridges from~$G$ and
consider the connected components of this graph that are incident
to~$f$.  We create a node for each such connected component and
connect them by an edge if and only if they are connected by a bridge
in~$G$.  Similarly, we can define the bridge forest of~$f$ with
respect to an augmentation~$W$, where we only remove bridges of~$G+W$.
Observe that each leaf component in a bridge forest with respect to $W$ contains a 
subgraph that corresponds to a leaf component in the associated bridge forest of $G$.
Clearly, an augmentation is connected if and only if the bridge graph
of each face is connected, and it is biconnected if and only if each
bridge forest consists of a single node.  Observe that the bridge
forest~$B_f$ contains a connected component for each connected
component of~$G$ incident to~$f$.  We say that such a component is
\emph{trivial} if its corresponding connected component in~$B_f$
consists of a single node.  A 2-edge connected component of~$G$
incident to~$f$ is a \emph{leaf component} if its corresponding node
in~$B_f$ has degree~1. Figure \ref{fig:bridge_forest} shows an example.

\begin{figure}[tb]
    \centering
    \includegraphics{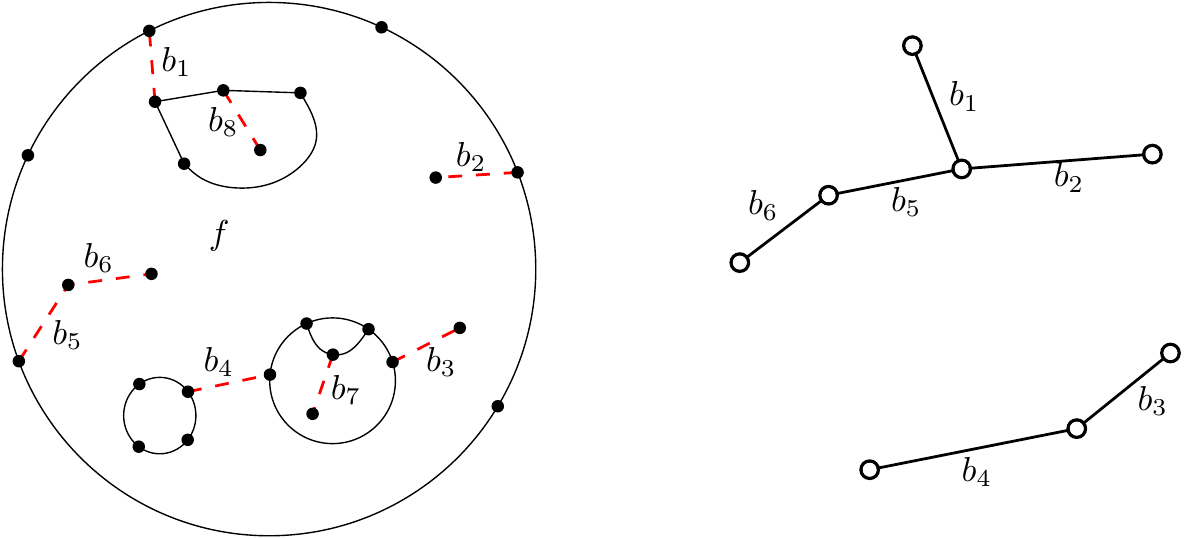}
    \caption{A face~$f$ (right) and its corresponding bridge
      forest~$B_f$ (right); bridges are dashed, red edges.  The
      bridges $b_7$ and $b_8$ are not incident to $f$ and hence not
      contained in~$B_f$.}
    \label{fig:bridge_forest}
  \end{figure}

Next, we study necessary and sufficient conditions for when a node
assignment~$A$ admits for a face~$f$ a planar 3-regular
augmentation~$W_f$ such that the resulting bridge forest is a single
node.  Obviously, if there is more than one connected component
incident to~$f$, each of them must assign at least two valencies
to~$f$; if none is assigned, the augmentation will not be connected,
if only one is assigned the single edge incident to this valency will
form a bridge.  Additionally, each leaf component must assign at least
one valency, otherwise its incident bridge in~$B_f$ will remain a
bridge after the augmentation.  Thus the following
\emph{biconnectivity condition} is necessary for a face~$f$ with~$z_f$
incident connected components to admit a biconnected augmentation.

\begin{condition}[Biconnectivity condition]\hfil
\begin{enumerate}[(1)]
  \itemsep=0ex
\item If~$z_f>1$, each connected component incident to~$f$ must have
  at least two valencies assigned to~$f$, and
\item each leaf component of~$f$ must assign at least one valency
  to~$f$.
\end{enumerate}
\end{condition}

We show that these conditions are also sufficient, both in the planar
and in the non-planar case.

\begin{theorem}
  \label{thm:biconn-aug}
  Let~$G$ be a planar maxdeg-3 graph on $n$ vertices with a fixed
  embedding, and let~$A$ be a node assignment.  Then the following
  statements hold.
  \begin{enumerate}[(i)]
  \itemsep = 0ex
\item $A$ admits a biconnected realization if and only if~$A$
  satisfies the parity condition, the matching condition, and the
  biconnectivity condition for all faces of~$G$.
\item The realization can be chosen to be planar if and only if~$A$
  additionally satisfies the planarity condition for faces of~$G$.
  A corresponding realization for~$A$ can be
  computed in~$O(n)$ time.
  \end{enumerate}
\end{theorem}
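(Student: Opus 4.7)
Necessity of all four conditions is immediate: parity and matching from Theorem~\ref{thm:characMatch}, biconnectivity from the definition (an unassigned connected component, a component contributing a single valency, or an uncovered bridge each prevent biconnectivity of $G+W$), and planarity from Theorem~\ref{thm:planar-realizable}. For sufficiency, my plan is to extend the face-by-face construction used for Theorem~\ref{thm:conn-aug}. For each face $f$ I would iteratively apply Rule~\ref{r:1} (for (i)) or Rule~\ref{r:2} (for (ii)) to select an edge $uv$, using the freedom in the choice of $v$ not only to reduce the number $z_f$ of incident components (as in the connected case) but also to progressively collapse the bridge forest $B_f$ to a single node.

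The refined priorities for selecting $v$ would be: (a) while $B_f$ has more than one tree component, pick $v$ in a tree component different from that of $u$; (b) otherwise, while $B_f$ is a tree with more than one node, pick $v$ in a leaf component of $B_f$ distinct from the $2$-edge-connected component containing $u$, which covers every bridge on the $u$-to-$v$ path in $B_f$; (c) once $B_f$ is a single node, fall back to the selection rules as in the proofs of Theorem~\ref{thm:planar-realizable} and Theorem~\ref{thm:conn-aug}. The invariant to maintain after each insertion is that $A$ still satisfies the parity, matching, biconnectivity, and (for (ii)) planarity condition for the resulting face $f+uv$. Once $B_f$ has been reduced to a single node, all remaining augmentation edges are internal to a single $2$-edge-connected block and so cannot disturb biconnectivity, reducing the remainder to Theorem~\ref{thm:conn-aug}.

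The principal obstacle is showing that such a $v$ always exists without conflicting with the constraints Rule~\ref{r:1}/Rule~\ref{r:2} impose on $v$ (i.e.\ $v\notin S$, and in the planar subrules the reachability of $v$ by a left- or right-first boundary search), and that the biconnectivity invariant is preserved. For existence, biconnectivity condition~(2) supplies an assigned vertex inside every leaf of $B_f$ and biconnectivity condition~(1), combined with the matching condition, guarantees that enough valencies remain spread across distinct components that a suitable $v$ outside $S$ still exists; in the planar case one additionally uses that in Phase~2 of Rule~\ref{r:2} we have $z_f\le 1$, so only priority (b) interacts with the search-based selection. For the invariant, a case analysis parallel to the proofs of Lemmas~\ref{lem:R1} and~\ref{lem:R2} suffices: biconnectivity condition (2) can only fail by exhausting a leaf's last valency, but any such edge necessarily merges that leaf with another node of $B_f$, so it ceases to be a leaf; biconnectivity condition (1) cannot be violated because we only remove valencies from components that are merged with at least one other component on the same step.

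The linear-time bound is obtained as in Theorem~\ref{thm:planar-realizable} and Theorem~\ref{thm:conn-aug}: the bridge forests of all faces can be precomputed in $O(n)$ total time, and they can be maintained under the edge insertions incrementally (each insertion either leaves $B_f$ unchanged or contracts a path in $B_f$ to a single node), so each rule application costs amortized constant time and the whole face is processed in time proportional to its size.
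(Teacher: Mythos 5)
Your overall strategy differs fundamentally from the paper's. You try to build biconnectivity \emph{during} the edge-selection process, refining the choice of the second endpoint $v$ in Rules~\ref{r:1} and~\ref{r:2} so that the bridge forest $B_f$ collapses as edges are inserted. The paper instead first computes an arbitrary (planar) realization $W_f$ for $f$ --- reducing to the case where $B_f$ is connected exactly as in Theorem~\ref{thm:conn-aug} --- and then \emph{repairs} it: as long as $G+W_f$ has a bridge $b$ incident to a leaf of the bridge forest, it finds two augmentation edges $x_1y_1$ and $x_2y_2$ with $x_1,x_2$ on distinct sides of $b$ and swaps them for $x_1y_2,x_2y_1$ (in the planar case for $x_1x_2,y_1y_2$ or $y_2x_1,x_2y_1$, chosen so that both new edges lie inside the face of $G+W_f$ incident to $b$). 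Each swap strictly decreases the number of leaves of the bridge forest and manifestly preserves planarity, degrees, and the node assignment.

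The gap in your version is exactly the point you flag and then wave away: priority (b) is incompatible with Phase~2 of Rule~\ref{r:2}. Once $f$ is incident to a single connected component, Lemma~\ref{lem:R2} depends crucially on $v$ being the first assigned vertex reached by a left-/right-first search from $u$ along the boundary of $f$; this is what guarantees that when $uv$ splits $f$ into two faces, one of them is incident to all remaining assigned vertices, so that treating $A$ as an assignment for $f+e$ remains meaningful. Your priority (b) instead forces $v$ into a prescribed leaf component of $B_f$, which may lie far from $u$ along the boundary; the resulting edge can strand assigned vertices on both sides of the split, after which neither the planarity nor the matching argument of Lemma~\ref{lem:R2} applies. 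Asserting that a case analysis ``parallel to'' Lemmas~\ref{lem:R1} and~\ref{lem:R2} suffices does not resolve this, because those case analyses are built on precisely the freedom you give up. (Your preservation argument for biconnectivity condition~(2) is also shakier than stated: an insertion contracts a path of $B_f$ into a supernode that may itself be a leaf, and you would need the paper's observation that every leaf with respect to a partial augmentation contains a leaf of the original $B_f$ to conclude that it still owns an assigned valency.) The construct-then-rewire order avoids both problems, since the two swapped edges always bound a common face of $G+W_f$ and the swap never changes which vertices are assigned where; if you want to salvage the incremental route, you would have to prove a biconnectivity-aware replacement for Lemma~\ref{lem:R2}, which is the hard core of the theorem and is missing here.
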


\begin{proof}
  Clearly the conditions for both statements are necessary.  We prove
  that they are also sufficient.  Let~$A$ be a node assignment
  satisfying the parity condition, the matching condition, and the
  biconnectivity condition for all faces of~$G$.  We construct a
  biconnected realization of~$A$ for each face~$f$; together they form
  a biconnected realization of~$A$.  

  First assume that the bridge graph~$B_f$ of~$f$ is connected, that
  is~$G$ has only one connected component incident to~$f$.  Let~$W_f$
  be a realization of~$A$ for~$f$ and assume that it is not a
  biconnected realization.  Then there exists a bridge~$b$ in~$G+W_f$
  whose endpoints are incident to~$f$ in~$G$.  Without loss of
  generality we assume~$b$ such that it is incident to a leaf of the
  bridge forest $B_f$ with respect to $W_f$.  Consider the subtrees of~$B_f$ 
  on distinct sides
  of~$b$.  Each of them contains a leaf, and thus a vertex assigned
  to~$f$.  Let~$x_1,x_2$ denote two such vertices on distinct sides
  of~$b$, choosing~$x_1$ and~$x_2$ as endpoints of~$b$ if possible,
  and let~$e_1=x_1y_1$ and~$e_2=x_2y_2$ denote two edges of~$W_f$
  incident to~$x_1$ and~$x_2$, respectively.  Since~$b$ is a bridge
  in~$G+W_f$, we have that~$x_1,x_2,y_1$ and~$y_2$ are pairwise
  disjoint, and except for possibly~$x_1$ and~$x_2$, which might be
  joined by~$b$, they are pairwise non-adjacent.  It is then not hard
  to see that replacing in~$W_f$ the edges~$x_1y_1$ and~$x_2y_2$
  by~$x_1y_2$ and~$x_2y_1$ yields a new augmentation~$W_f'$ of~$G$
  whose bridge forest has fewer leaves.  Applying this construction
  iteratively yields a biconnected realization.  Now assume that the
  bridge graph of~$f$ is not connected and consists of~$z_f$ connected
  components.  By correctness of Rule~\ref{r:1}, we may first add a
  set~$W_f''$ of~$z_f-1$ edges such that~$G$ becomes connected.
  Observe that~$G+W_f''$ is still a planar graph and contains the
  face~$f$.  Consider the assignment~$A'$ induced by~$A$ on~$G+W_f''$.
  The bridge forest~$B_f'$ of~$G+W_f''$ for~$f$ is connected, and each
  leaf component of $G+W_f''$ either is a connected component of $G$,
  or was already a leaf component in~$G$.  In both cases it follows
  that the leaf component contains at least one vertex assigned
  to~$f$, which is incident to an edge from~$W_f''$ that is not a
  bridge.  Hence the construction also works for the case that~$B_f$
  is not yet connected, finishing the proof of claim~(i).
  
  For claim~(ii) observe that by the same argument (applying Rule~\ref{r:2})
  instead of Rule~\ref{r:1}), we may again assume that~$B_f$ is connected.
  Now let~$W_f$ be any planar realization of $A$ for a face $f$ of~$G$ and let~$b$
  be a bridge that is incident to a leaf of~$B_f$ (with respect to $W_f$).  We again choose
  vertices~$x_1,x_2,y_1,y_2$ for rewiring, but slightly more
  carefully.  Namely observe that in~$G+W_f$ the bridge~$b$ is
  incident to a face~$f'$ in~$G+W_f$, and by the same argument as
  above this face must be bounded by at least two edges~$x_1y_1$
  and~$x_2y_2$ of~$W_f$, having their endpoints on distinct sides
  of~$b$.  We assume that the clockwise order of occurrence along the
  boundary of~$f'$ is~$x_1x_2y_2y_1$.  If~$x_1$ and~$x_2$ are not
  adjacent, we replace~$x_1y_1$ and~$x_2y_2$ by the two edges~$x_1x_2$
  and~$y_1y_2$, which clearly is planar.  If~$x_1$ and~$x_2$ are
  adjacent, that is~$b=x_1x_2$, we replace them by~$y_2x_1$
  and~$x_2y_1$, which is again planar since~$b$ is incident to~$f'$ on
  both sides; see Fig.~\ref{fig:bicon_rewire}.  As above it can be seen that the
  bridge graph of~$G+W_f$ has fewer leaves, and thus iteratively
  applying the rewiring step yields the desired realization.

\begin{figure}[tb]
  \centering
   \includegraphics{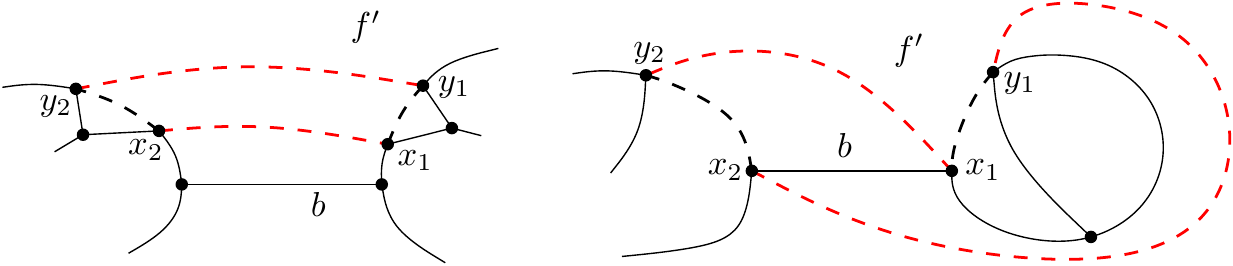}
   \caption{Illustration of the rewiring step from the proof of
     Theorem~\ref{thm:biconn-aug}; $x_1,x_2$ are not adjacent (left),
     and~$x_1,x_2$ are adjacent (right).  The dashed black edges are
     part of an augmentation and are replaced by the dashed red edges.
     Afterwards the edge~$b$ is not a bridge anymore.}
    \label{fig:bicon_rewire}
  \end{figure}

Concerning the running time, recall that, by applying Rule~\ref{r:2}, 
$z_f-1$ edges can be added to each face in $G$ in~$O(n)$ time 
such that $G$ becomes connected. A planar (possibly not biconnected) 
realization $W_f$ of $A$ for each 
face $f$ in the connected graph $G$ can be computed in linear time
by Theorem~\ref{thm:conn-aug}.
For the rewiring we apply a right-first search along 
the boundary of $f$ handling the bridges that are incident 
to leaves of the bridge forest consecutively.

We start from a vertex outside a leaf component and search for the 
first bridge $b$ that is incident to a leaf component 
of $B_f$ (with respect to $W_f$). 
After crossing $b$ in the direction of the leaf, 
we follow the boundary of the face $f'$ in $G + W_f$ that is incident to $b$ on both sides
(instead of the boundary of $f$ in $G$).
We store an edge $y_1x_1 \in W_f$ at the leaf component (which exists by the arguments above) and
the first edge $x_2y_2 \in W_f$ on the other side of $b$, i.e., after crossing the bridge the second time.
These edges are then rewired. If $x_2y_2$ was an edge in a leaf component before, there are no further
bridges connected to the boundary of $f$ between $x_2$ to $y_2$ and we continue the search 
for the next bridge that is incident to a leaf from $y_2$, now again along the boundary of $f$.
Otherwise, we recursively search
for the next bridge starting from $x_2$ along the boundary of $f$,
and apply the rewiring.
When we reach $y_2$ again, the previously rewired edge incident to $y_2$ might be 
rewired again, however, at this point all bridges between $x_2$ and $y_2$ are remedied.
The search continues at~$y_2$ and finally stops at~$y_1$ at the latest. 
Then~$B_f$ consists of a single node.
Hence, this search and the rewiring can be done in linear time with respect to 
the number of vertices incident to $f$.
\end{proof}

Next, we extend Theorem~\ref{thm:planar-realizable} to this setting,
allowing us to additionally enforce the planarity condition for an
assignment already satisfying the parity condition, the matching
condition and the biconnectivity condition.  Similar to the proof of
Corollary~\ref{cor:connected-planar-realizable}, it can be seen that
the rewiring performed in the proof of
Theorem~\ref{thm:planar-realizable} does not invalidate the
biconnectivity condition.  It reassigns vertices to other faces only
if a face is assigned an insufficient number of additional valencies,
which only shortens long paths of degree-2 vertices but never reduces
the number of assigned valencies of a leaf component to zero or of a connected
component below two.  We thus have the following corollary.

\begin{corollary}
  \label{cor:biconnected-planar-realizable}
  Let~$G$ be a planar maxdeg-3 graph with a fixed embedding and
  let~$A$ be a node assignment satisfying the parity condition, the
  matching condition, and the biconnectivity condition.  Then a
  modified assignment~$A'$ that additionally satisfies the planarity
  condition can be computed from~$A$ in~$O(n)$ time.
\end{corollary}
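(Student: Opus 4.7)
The plan is to reuse the reassignment procedure of Lemma~\ref{lem:matching-planarity} verbatim and to argue that its side-effect on the biconnectivity condition is benign, in the same spirit as the proof of Corollary~\ref{cor:connected-planar-realizable}. Recall that the procedure only acts when the planarity condition is violated at some face~$f$: it then picks a longest path (or cycle) $\pi=v_1,\dots,v_k$ of degree-$2$ vertices that are incident to both~$f$ and a second face~$f'$, and moves two of them from~$f$ to~$f'$. Since the bridge forest $B_f$ is a property of $G$ alone and does not depend on the assignment, the only way a reassignment can harm biconnectivity at~$f$ is by reducing the valencies contributed to~$f$ by a single connected component~$C$, namely the component containing~$\pi$.

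First I would restrict the situations in which reassignment can be triggered. If $\pi$ is a path and $z_f\ge 2$, then biconnectivity condition~(1) guarantees that every other connected component incident to~$f$ supplies at least two valencies to~$f$, which already satisfies the planarity demand of one valency from a different component; thus path-reassignment happens only when $z_f=1$, and then biconnectivity condition~(1) is vacuous. If $\pi$ is a cycle, then all of its vertices have degree~$2$, so $\pi$ itself is a connected component of $G$, and the cycle form of the planarity condition requires two valencies from two other components; if $z_f\ge 3$, biconnectivity condition~(1) already supplies this, so cycle-reassignment happens only when $z_f\le 2$. The only new case for condition~(1) is therefore $z_f=2$ with $\pi$ a cycle, and here $C=\pi$ still has $k-2\ge 2$ valencies at~$f$ since $k>3$, while the other component is untouched.

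Next I would verify condition~(2). Any leaf component of~$B_f$ whose valency count at~$f$ can decrease must contain (part of) $\pi$. If $\pi$ is a cycle, $C=\pi$ is a trivial single-node component of~$B_f$ and hence not a leaf, so nothing is lost. If $\pi$ is a path, all of $\pi$ lies in one 2-edge-connected node $L$ of~$B_f$, since the internal edges of $\pi$ bound the two distinct faces $f$ and $f'$ and are therefore not bridges; thus $L$ retains the $k-2\ge 1$ remaining vertices of $\pi\setminus\{u,v\}$ as valencies at~$f$, so $L$, if it is a leaf, still satisfies condition~(2). All other leaf components of~$B_f$ are left completely alone. At the target face~$f'$, reassignment only \emph{adds} valencies to a component already incident to~$f'$, so no biconnectivity condition can be invalidated at~$f'$. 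The running time argument of Lemma~\ref{lem:matching-planarity} carries over unchanged and yields $O(n)$.

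The main obstacle is the path case with $z_f=1$: one must rule out that reassignment could drain all valencies from a leaf component of~$B_f$. The argument above handles this because the reassignment does not modify~$G$ (so~$B_f$ and its leaf structure are invariant) and the $k-2\ge 1$ vertices of $\pi\setminus\{u,v\}$ that remain assigned to~$f$ all sit inside the same 2-edge-connected block~$L$. Put together, these observations show that the output of Lemma~\ref{lem:matching-planarity}, applied to~$A$, is the desired assignment~$A'$.
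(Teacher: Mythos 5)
Your proposal is correct and follows essentially the same route as the paper, which likewise reuses the reassignment of Lemma~\ref{lem:matching-planarity} and observes (in the style of Corollary~\ref{cor:connected-planar-realizable}) that shortening a long path or cycle of degree-2 vertices never drops a leaf component to zero assigned valencies nor a connected component below two. Your case analysis by $z_f$ and by path versus cycle, together with the observation that $u$ and $v$ lie in a single 2-edge-connected block so only that block can lose valencies, is in fact a more detailed justification of the one-sentence argument the paper gives.
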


\paragraph{The biconnectivity assignment graph.}

We now show how to test efficiently whether such an augmentation
exists.  The idea is, as in the connected case, to consider a
corresponding \emph{biconnectivity assignment graph} that models the
additional requirements.  Let~$G = (V,E)$ be a planar maxdeg-3 graph
with a fixed embedding.  We construct the biconnectivity assignment
graph~$G_A''$ of~$G$ using similar techniques as for the connectivity
assignment graph.  In particular, we again consider the partition of
the vertices of $\sV$ into the sets~$\Vin$ with a fixed assignment
and~$\Vbound$ that may be assigned to two different faces, and the
corresponding preliminary assignment~$\preA$.  We note that unlike in
the connectivity case, a graph that contains a non-empty triangle
generally does not admit a biconnected augmentation as one of the
edges incident to such a triangle necessarily forms a bridge.  Hence
if~$G$ admits a biconnected 3-regular augmentation, all triangles are
empty, and thus assigned by~$\preA$.

Again the biconnectivity assignment graph~$G_A'' = (V'',E'')$ is
formed on a superset~$V'' \supseteq \Vbound$ of the vertices without a
fixed assignment.  We now describe an edge set~$E_f$ for each
face~$f$.  As before, the final interpretation will be that in the
assignment induced by a (generalized) perfect matching~$M$ of~$G_A''$
a vertex~$v \in\Vbound$ is assigned to~$f$ if and only if it is
incident to an edge in~$M \cap E_f$.  If a face~$f$ is incident to a
single connected component whose bridge forest consists of a single
node, we use the ordinary assignment graph, where~$E_f$ consists of
edges between all non-adjacent pairs of vertices from~$\sV$ incident
to~$f$.  Now assume that~$f$ does not have this property.  To enforce
the biconnectivity condition, we consider the leaf components of~$f$.
For each leaf component~$L$ that does not contain vertices preassigned
to~$f$, we add a dummy vertex~$v_{L,f}$ with demand~1 and connect it
to all vertices from~$\Vbound$ of~$L$ incident to~$f$; this clearly
enforces biconnectivity condition~(2).  Moreover, for each connected
component of~$f$ that contains a bridge this also enforces
biconnectivity condition~(1).  If~$G$ is not connected, we add for
each connected component~$C$ that neither contains a bridge nor any
vertices preassigned to~$f$, a dummy vertex~$v_{C,f}$ with demand~2
and connect it to all vertices of~$C$ that are in~$\Vbound$ and
incident to~$f$. Note that these components are no leafes by
definition.  Let~$c_f$ denote the number of valencies demanded by
dummy vertices~$v_{C,f}$ and~$\ell_f$ the number of valencies demanded
by dummy vertices~$v_{L,f}$.  We compute the demand~$\predem$ of the
preassigned vertices as in the construction of the connectivity
assignment graph.  To satisfy this demand, at least~$\predem - c_f -
\ell_f$ additional vertices from~$\Vbound$ need to be assigned to~$f$.
We thus set~$n_f = \max\{\predem - c_f - \ell_f,0\}$.  To ensure this
and the parity condition, we add a new dummy vertex~$v_f$ with
demand~$s_f$, where~$s_f = n_f$ if~$n_f + \prea + c_f + \ell_f$ is
even and~$s_f = n_f + 1$, otherwise.  Finally, we allow an arbitrary
even number of vertices of~$\Vbound$ incident to~$f$ to be matched by
adding to~$E_f$ all edges in~$\binom{X_f}{2}$, where~$X_f$ denotes the
vertices in~$\Vbound$ incident to~$f$.  The connectivity assignment
graph in Figure \ref{fig:con_graph} coincides with the biconnectivity
assignment graph for the given graph, except for the demands of
$v_{C_1,f}$ and $v_{C_2,f}$.  They are set to 2 here, because the
corresponding components neither contain a bridge nor any preassigned
vertices.  Recall that, for reasons of clarity, the edges between
vertices of~$\Vbound$ incident to the face $f$ are omitted.

\begin{lemma}
  \label{lem:biconnectity-assignment-graph}
  Let~$G$ be a maxdeg-3 graph with a fixed embedding and let~$G_A''$
  be its biconnectivity assignment graph.  Then each perfect
  matching~$M$ of~$G_A''$ induces (together with the preliminary
  assignment~$\preA$) a node assignment that satisfies for each face
  the parity condition, the matching condition, and the biconnectivity
  condition.  Conversely, for each such node assignment~$A$ there
  exists a perfect matching~$M$ of~$G_A'$ that induces~it.
\end{lemma}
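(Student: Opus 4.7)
The proof plan is to mirror the structure of Lemma~\ref{lem:connectivity-assignment-graph}, handling two directions (matching $\Rightarrow$ valid assignment, and vice versa) and handling separately the easy case where $f$ is incident to a single connected component whose bridge forest is trivial (here $G_A''$ reduces to the ordinary assignment graph, so Theorem~\ref{thm:characMatch} applies directly and the biconnectivity condition is vacuous). The remaining case, where $f$ has nontrivial biconnectivity structure, is where the dummy vertices $v_{L,f}$, $v_{C,f}$, $v_f$ come into play, and this is what needs to be analyzed carefully.

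For the forward direction, given a perfect matching $M$ of $G_A''$, let $A$ be the induced assignment (combined with $\preA$). I would verify each condition for a face $f$ by counting. The total number of valencies assigned to $f$ is $\prea + c_f + \ell_f + s_f + 2k_f$, where $k_f = |M \cap \binom{X_f}{2}|$; this is even by the choice of $s_f$, giving the \emph{parity condition}. The \emph{biconnectivity condition} follows directly from the dummy vertices: each $v_{L,f}$ forces at least one valency from its leaf component (giving (2)), and each $v_{L,f}$ or $v_{C,f}$ (together with preassigned vertices in components containing bridges) forces at least two valencies from each connected component (giving (1)). For the \emph{matching condition} I would distinguish indicator sets in $\Vin$ (whose demand is covered by $n_f$, hence by $s_f$, in exact analogy to the connectivity case) and indicator sets contained in $\Vbound$. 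Among the latter, 3-cycles can be excluded since a non-empty triangle in $G$ forbids a biconnected augmentation and is therefore preassigned; jokers are handled by parity; pairs are handled either by parity combined with an additional valency from their own component (guaranteed by biconnectivity (1) when $z_f>1$) or, when $z_f = 1$, by the other valencies of the component whose existence is forced via the leaf-component dummies plus parity.

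For the backward direction, given a node assignment $A$ satisfying parity, matching, and biconnectivity for all faces, I would construct $M_f$ greedily face by face. For each leaf component $L$ with no preassigned vertex, biconnectivity (2) hands me a vertex of $\Vbound \cap L$ assigned to $f$, which I match to $v_{L,f}$. For each bridgeless connected component $C$ with no preassigned vertex, biconnectivity (1) hands me two vertices of $\Vbound \cap C$ assigned to $f$, which I match to $v_{C,f}$. The matching condition ensures that the remaining number of boundary vertices assigned to $f$ is at least $n_f$, so I can match $s_f$ of them to $v_f$. The parity condition then forces the rest to be of even cardinality, and since $E_f''$ contains all pairs in $\binom{X_f}{2}$, any pairing completes $M_f$.

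The main obstacle will be the careful case analysis for the matching condition in the forward direction, specifically showing that $\Vbound$-indicator sets whose demand is not directly enforced by some dummy vertex still have their demand satisfied via the interplay of the biconnectivity condition (which produces extra valencies in the same or different components) with the parity condition. Keeping track of whether a component contains a preassigned vertex, a bridge, or neither (so that the right dummy $v_{C,f}$ or $v_{L,f}$ is in play) is the bookkeeping part that needs to be done carefully but is essentially routine given the analogous argument already carried out for Lemma~\ref{lem:connectivity-assignment-graph}.
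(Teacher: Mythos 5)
Your proposal follows essentially the same route as the paper's proof: the trivial case (single component, trivial bridge forest) is delegated to the ordinary assignment graph, parity comes from the counting $\prea + c_f + \ell_f + s_f + 2k_f$ being even by choice of $s_f$, biconnectivity comes directly from the dummy vertices, the matching condition is split into $\Vin$-indicator sets (covered by $s_f$) versus $\Vbound$-indicator sets (jokers/pairs via parity and the biconnectivity-forced extra valencies, 3-cycles excluded because a non-empty triangle precludes any biconnected augmentation and is hence preassigned), and the converse is the same greedy dummy-matching construction. Your treatment of pairs when $z_f=1$ but the bridge forest is nontrivial is in fact slightly more explicit than the paper, which just defers to the connectivity lemma; no gap.
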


\begin{proof}
  Let~$M$ be a perfect matching and let~$A$ be the corresponding node
  assignment.  For faces~$f$ that are incident to a single connected
  component whose bridge graph consists of a single node, $E_f$ is
  chosen as in the ordinary assignment graph, and thus~$A$ satisfies
  the parity condition and the matching condition for~$f$.  The
  biconnectivity condition holds trivially for these faces.  Now
  assume that~$f$ is a face not having this property.  Clearly, the
  demand of~$s_f$ ensures that an even number of valencies is assigned
  to~$f$, and hence~$A$ satisfies the parity condition.  Moreover, the
  dummy vertices~$v_{C,f}$ and~$v_{L,f}$ explicity ensure the
  biconnectivity condition.  For the matching condition, recall
  that~$s_f$ was chosen such that the demands of all matching
  indicator sets consisting of vertices in~$\Vin$ are satisfied.
  Thus, if there is a matching indicator set whose demand is not
  satisfied, it would consist of vertices in~$\Vbound$, and hence
  would be a joker, a pair or a 3-cycle.  Let~$T$ be such an indicator
  set.  That the demands of jokers and pairs are satisfied can be
  argued as in Lemma~\ref{lem:connectivity-assignment-graph}.  If~$T$
  is a 3-cycle, it forms a non-empty triangle (as it would be
  preassigned otherwise), but then, as argued above, a biconnected
  augmentation does not exists.  Hence this case cannot occur, and the
  matching condition is satisfied.

  Conversely let~$A$ be a node assignment satisfying the parity
  condition, the matching condition, and the biconnectivity condition.
  We construct for each face~$f$ a matching~$M_f \subseteq E_f$
  satisfying exactly the demands of all vertices assigned to~$f$ and
  the dummy vertices associated with~$f$.  If~$f$ is incident to a
  single connected component of~$G$, and the bridge forest of this
  component is a single node, a realization of~$A$ for~$f$, which
  exists by Theorem~\ref{thm:biconn-aug}, forms the desired matching.
  Otherwise, the conditions satisfied by~$A$ imply that we can find
  enough vertices of~$\Vbound$ assigned to~$f$ and match them to the
  dummy vertices associated with~$f$.  The choice of their demands and
  the parity condition imply that the number of unmatched vertices
  in~$\Vbound$ assigned to~$f$ is even, and they can be paired
  arbitrarily in~$G_A''$ to form~$M_f$.
\end{proof}

To decide biconnected \faug for a given maxdeg-3 graph on~$n$ vertices with a
fixed embedding, we thus first construct in~$O(n^2)$ time the
biconnectivity assignment graph~$G_A''$ and compute in~$O(n^{2.5})$
time a perfect (generalized) matching in it. If such a matching does not
exist, then a biconnected augmentation does not exist by
Theorem~\ref{thm:biconn-aug} (i).  Otherwise, such a matching induces a
node assignment satisfying the parity condition, the matching
condition, and the biconnectivity condition by
Lemma~\ref{lem:biconnectity-assignment-graph}.  Using
Corollary~\ref{cor:biconnected-planar-realizable}, we modify it
in~$O(n)$ time to a node assignment additionally satisfying the
planarity condition.  A corresponding augmentation can then be found
in~$O(n^2)$ time by Theorem~\ref{thm:biconn-aug}(ii).
  
\begin{theorem}
\label{thm:biconnectedFERA}
Biconnected \faug can be solved in~$O(n^{2.5})$ time.
\end{theorem}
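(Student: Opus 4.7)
My plan is to assemble the algorithm from the ingredients already established in this section, following the same pattern that produced Theorem~\ref{thm:fera} for plain \faug. The overall strategy is to reduce biconnected \faug to a generalized perfect matching problem via the biconnectivity assignment graph, then post-process the resulting assignment to enforce planarity, and finally realize it face-by-face.

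Concretely, given a planar maxdeg-3 input graph $G$ on $n$ vertices with a fixed embedding, I would first construct the biconnectivity assignment graph $G_A''$ as described above. Since $G_A''$ is formed on vertices in $\Vbound$ together with a constant number of dummy vertices per face and its edges are $\binom{X_f}{2}$-style edge sets in each face, it has $O(n^2)$ edges and can be built in $O(n^2)$ time by traversing each face and enumerating the relevant pairs. Next, I would run Gabow's algorithm~\cite{g-ertdc-83} to compute a perfect (generalized) matching in $O(\sqrt{|V''|}\,|E''|) = O(n^{2.5})$ time. If no such matching exists, Lemma~\ref{lem:biconnectity-assignment-graph} implies that no node assignment satisfying parity, matching and biconnectivity exists, and hence by Theorem~\ref{thm:biconn-aug}(i) no biconnected 3-regular augmentation exists, so we can reject.

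Otherwise the matching, together with the preliminary assignment $\preA$, induces a node assignment $A$ satisfying the parity, matching and biconnectivity conditions for every face (Lemma~\ref{lem:biconnectity-assignment-graph}). I would then apply Corollary~\ref{cor:biconnected-planar-realizable} to convert $A$ into an assignment $A'$ that additionally satisfies the planarity condition for every face, at an extra cost of $O(n)$ time. Finally, I invoke Theorem~\ref{thm:biconn-aug}(ii) to turn $A'$ into an actual biconnected planar 3-regular augmentation within the claimed running time.

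Summing the three phases gives $O(n^2) + O(n^{2.5}) + O(n) = O(n^{2.5})$, which is the bound claimed. The only step that is not a direct invocation of a previous result is the running-time analysis for building $G_A''$; here the potential obstacle is bounding the total size of the edge sets $E_f$ across faces, but since each pair of vertices contributes to at most two faces (the two incident to both), the total size is still $O(n^2)$, well within the budget dominated by Gabow's matching step.
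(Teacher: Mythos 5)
Your proposal is correct and follows essentially the same route as the paper: build the biconnectivity assignment graph in $O(n^2)$ time, run Gabow's generalized matching algorithm in $O(n^{2.5})$ time, translate the matching into a node assignment via Lemma~\ref{lem:biconnectity-assignment-graph}, enforce the planarity condition with Corollary~\ref{cor:biconnected-planar-realizable}, and realize it with Theorem~\ref{thm:biconn-aug}. The only cosmetic difference is that you explicitly justify the $O(n^2)$ bound on the size of the edge sets $E_f$, which the paper takes for granted.
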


\section{Proof of Completeness of Triconnected \faug}
\label{sec:tricon}
\newcommand{\theoremTc}{Triconnected \faug is NP-complete,
even if the input graph is already biconnected.
}
\begin{theorem}
  \label{thm:tc}
  \theoremTc
\end{theorem}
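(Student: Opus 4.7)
Membership in NP is straightforward: an augmentation $W$ is a polynomial-size certificate since $|W|\le 3|V|/2$, and one can verify in polynomial time that $G+W$ is $3$-regular, planar in the given embedding, and $3$-connected.

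For hardness I would give a polynomial reduction from a suitable planar SAT variant, for example \mpsat, whose incidence structure already admits a planar drawing with variables on a horizontal line, positive clauses above and negative clauses below, in the style of Rutter and Wolff~\cite{rw-acpgg-12}. The overall construction is a biconnected planar graph with a fixed embedding in which triconnectivity is the only non-trivial constraint; the planarity and $3$-regularity parts of the augmentation will be forced, and only the binary choices in the variable gadgets will decide whether the remaining separating pairs can be removed.

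The main building blocks I would design are:
\begin{compactenum}[(1)]
\item A \emph{variable gadget}: a small biconnected plane subgraph with prescribed valencies on its outer boundary such that, up to the fixed embedding, it admits exactly two planar $3$-regular augmentations extending to a triconnected subgraph. These two augmentations leave two different pairs of ``output'' valencies free, encoding $\mathrm{true}$ vs.\ $\mathrm{false}$.
\item A \emph{wire/transmission gadget}: a path-like biconnected strip that in its fixed embedding propagates the binary state of a variable gadget to an adjacent face while never creating a removable separation pair on its own. Each variable gadget will be surrounded by wires leading to the faces corresponding to positive and negative clauses in which it occurs.
\item A \emph{clause gadget}: a small biconnected subgraph placed inside the clause face that contains a designated separating pair $\{a,b\}$ whose only way to disappear in the augmentation is that at least one of the three incoming wires provides an edge across~$\{a,b\}$ in the face, which in turn is possible exactly when the corresponding literal is true.
\end{compactenum}
The variable, wire and clause gadgets are glued using a global framing cycle so that the entire construction is biconnected, planar, and of polynomial size, and so that the only freedom left in any planar $3$-regular augmentation is the binary state of each variable gadget.

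Correctness then amounts to two implications: a satisfying assignment of the SAT instance yields a consistent choice of states in the variable gadgets, which via the wires breaks every separating pair in every clause gadget and produces a triconnected augmentation; conversely, a triconnected planar $3$-regular augmentation must break the separating pair in every clause gadget, and by design this forces at least one incident wire to carry the ``satisfying'' state, hence yielding a truth assignment satisfying each clause. The hard part will be designing the clause gadget so that (a) it is biconnected and admits a $3$-regular planar augmentation in the fixed embedding at all, (b) the separation pair it contains can be eliminated \emph{only} through a wire in the satisfying state and not by some unexpected combination of augmentation edges inside the clause face, and (c) it cannot interact with neighbouring gadgets in ways that let the binary variable choice be ``cheated'' locally; verifying this last point requires a careful face-by-face case analysis using the local characterisation of planar realisations from Theorem~\ref{thm:planar-realizable}.
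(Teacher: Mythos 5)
Your high-level strategy matches the paper's: membership in NP via a polynomial certificate, and hardness by reduction from \mpsat using a biconnected plane graph built from variable and clause gadgets in which triconnectivity is the only binding constraint. However, what you have written is a plan rather than a proof, and the plan defers exactly the step where all of the work lies. The entire substance of the argument is the concrete design of the gadgets together with the verification that no ``unexpected'' augmentation inside a face can cheat the intended semantics, and you explicitly leave this open in your items (a)--(c). In the paper this is done by a specific variable gadget consisting of two literal subgadgets sharing a single degree-2 vertex~$u$ (whose unique valency provides the exclusive-or between a literal and its negation), a \emph{parity triangle}, and a \emph{literal body} built from a front triangle, a triangle basement, and one \emph{pair of triangles} per clause occurrence. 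The forcing argument is that connecting two triangles of a literal body to each other would create a separation pair (the two attachment vertices of those triangles), so in the literal gadget that does \emph{not} receive the valency of~$u$ every boundary valency must be consumed inside the literal face --- this is property (P1) --- while the gadget that does receive~$u$'s valency can release any subset of its boundary valencies to clauses (property (P2)).

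Your proposed mechanism is also genuinely different on the clause side, and it is the riskier of the two designs. You place a designated separating pair $\{a,b\}$ inside the clause gadget and want a ``wire'' to break it; the paper instead keeps the clause gadget minimal --- two adjacent degree-2 vertices that cannot be matched to each other (that would create a parallel edge) and therefore must be matched to boundary valencies of some incident literal gadget, which by (P1) is only possible for a literal whose negation has absorbed~$u$. There is no wire/transmission gadget at all; the monotone rectilinear layout lets clauses attach directly to the literal boundaries. To make your version work you would still need the variable-side all-or-nothing forcing (some analogue of (P1)), since a clause-side separating pair alone does not prevent a variable from feeding both its positive and negative clauses; without that lemma the backward direction of the reduction fails. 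So the gap is concrete: the gadget constructions and the separation-pair analysis that certifies properties (P1) and (P2) are missing, and these are not routine details but the proof itself.
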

\begin{proof}
  Triconnected \faug is in~$NP$ since, given a planar graph~$G$ with a
  fixed embedding, we can guess a set~$W \subseteq \binom{V}{2}$ of
  non-edges of~$G$ and then test efficiently whether the graph~$G+W$
  is 3-regular, planar, and triconnected, and that $W$ respects the
  given embedding of~$G$ (the latter can be checked using an algorithm
  due to Angelini et al.~\cite{adfjk-tppeg-10}).  We prove NP-hardness
  by reducing from the problem \mpsat, which is known to be
  NP-hard~\cite{bk-obsp-10}. It is a special variant of \psat, which
  we use in the next section for the hardness proof of \planaug.
    \begin{figure}[t]
    \centering
    \includegraphics{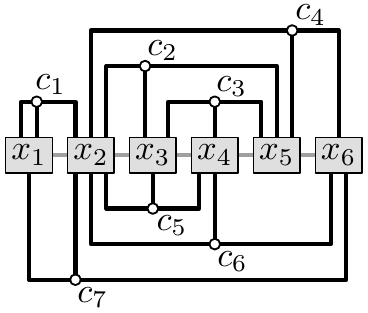}
    \caption{Layout of a (monotone) planar \sat formula.}
    \label{fig:sat-layout}
  \end{figure}
  A monotone planar \sat formula
  is a \sat formula whose clauses either contain only positive or
  negative literals and whose variable--clause graph is planar.  A
  \emph{monotone rectilinear representation} of a monotone planar \sat
  formula is a drawing of the variable--clause graph
  such that the variables correspond to axis-aligned rectangles 
  on the x-axis and clauses correspond to
  non-crossing three-legged ``combs'' above the x-axis if they contain positive variables and below the
  x-axis otherwise; see Fig.~\ref{fig:sat-layout}.  
  An instance of \mpsat is a monotone rectilinear representation of a monotone planar \sat formula~$\varphi$.
  We now construct a 
  biconnected 
  graph~$G_\varphi$ with a fixed planar embedding that admits a
  planar 3-regular triconnected augmentation if and only if~$\varphi$ 
  is satisfiable.

  The graph~$G_\varphi$ consists of so-called \emph{gadgets}, that is
  subgaphs that represent the variables, literals, and clauses
  of~$\varphi$; see Fig.~\ref{fig:tricongadget}.  
  For each gadget, we will
  argue that there are only a few ways to augment it to be
  3-regular, triconnected and planar.  
  Note that our construction connects variable
  gadgets corresponding to neighboring variables in the layout of the
  variable--clause graph of~$\varphi$.  Hence~$G_\varphi$ is always
  connected.  Additionally, we identify the left boundary of the
  leftmost variable gadget with the right boundary of the rightmost
  variable gadget.  In the figure vertices with degree less than~3 are
  highlighted by white disks.  All bends and junctions of line
  segments represent vertices of degree at least~3.  Vertices of degree
  greater than~3 are actually modeled by small cycles of vertices of
  degree~3, as indicated in the left of Fig.~\ref{fig:tricongadget}.  The
  (black thick and thin) solid line segments between adjacent vertices
  represent the edges of~$G_\varphi$; the dotted line segments
  represent non-edges of~$G_\varphi$ that are candidates for an
  augmentation of~$G_\varphi$. 
  Gaps in the thick black line segments of the literal gadgets indicate positions where further 
  subgraphs can be plugged in depending on the number of clauses 
  containing the literal.
   
  Each variable gadget consist of two symmetric parts, which
  correspond to the two literals. These literal (sub)gadgets are
  separated by (thick) horizontal edges. The degree-2 vertex $u$ is
  incident to both literal gadgets. The thin triangle at the right
  side is called the \emph{parity triangle} (see
  Fig.~\ref{fig:tricongadget}).
  Each literal gadget 
  contains a subgraph that is attached to the horizontal edges separating the 
  literals in only two vertices, which thus form a separator of size~2.
  We call this subgraph the \emph{literal body}. 
  The literal body can be considered as a path of 
  smaller (thin) subgraphs connected by thick black edges. 
  The thin subgraphs can be characterized as a triangle at the front side (\emph{front triangle})
  that is based on another triangular shaped subgraph (\emph{triangle basement}) and further
  oppositely placed \emph{pairs of triangles}.
  In Fig.~\ref{fig:tricongadget} we exemplarily marked a front triangle with its 
  triangle basement and a pair of triangles.
  In the construction the number of 
  pairs of triangles in the literal body corresponds to the 
  number of clauses containing the literal. Note that w.l.o.g.\
  we may assume that each literal appears in at least one clause.
  The necessary number of pairs of triangles can be plugged in at the gap. 
  The corresponding clauses are attached to the outer boundary of the literal gadget, 
  as exemplarily shown in Fig.~\ref{fig:tricongadget}.
  Each attached clause thereby induces a pair of adjacent degree-2 vertices at 
  the boundary that are incident to the literal gadget and to the clause gadget.
  We call the corresponding valencies the \emph{boundary valencies} of the literal gadget.
  Thus, each literal gadget has twice as many boundary valencies
  as clauses contain the literal.
  
  Consider the graph $G_\varphi'$ that we obtain by deleting the literal bodies, contracting 
  the parity triangles and ignoring degree-2 vertices. 
  We claim that $G_\varphi'$ is 3-vertex connected. 
  This is true since (a) the subgraph of $G_\varphi'$ induced by the 
  variable gadgets is 3-connected and (b) each subgraph induced by a clause gadget is 
  also 3-connected and is attached to the former (variable gadget) subgraph in twelve vertices.
  Hence, a 3-regular triconnected augmentation of $G_\varphi$ only needs to care for the connectivity at the literal bodies 
  and the parity triangles. 
  Note that $G_\varphi$ is already biconnected since it is obtained from a 3-connected 
  graph by subdividing edges, replacing degree-2 vertices by (parity) triangles and 
  adding paths of biconnected subgraphs (literal bodies) between existing endpoints.
   In the following we call a 3-regular, triconnected, planar augmentation a \emph{valid} augmentation.
  We show two properties of $G_\varphi$:
    \begin{enumerate}[(P1)]
  \itemsep=0ex
  \item Let $W$ denote a valid augmentation and $x$ a variable gadget. Then for 
  at least one literal gadget in $x$ the augmentation $W$ assigns all boundary valencies 
  to the incident literal face. \label{prop:allVal}
  \item Given a literal $L$ and a (sub)set of clauses containing $L$, there exists a 
  valid augmentation of the corresponding variable gadget that uses all 
  boundary valencies of $L$ 
  apart from those that are incident to the given clauses. \label{prop:evenVal}
   \end{enumerate}
    \begin{figure}[t]
    \centering
    \includegraphics{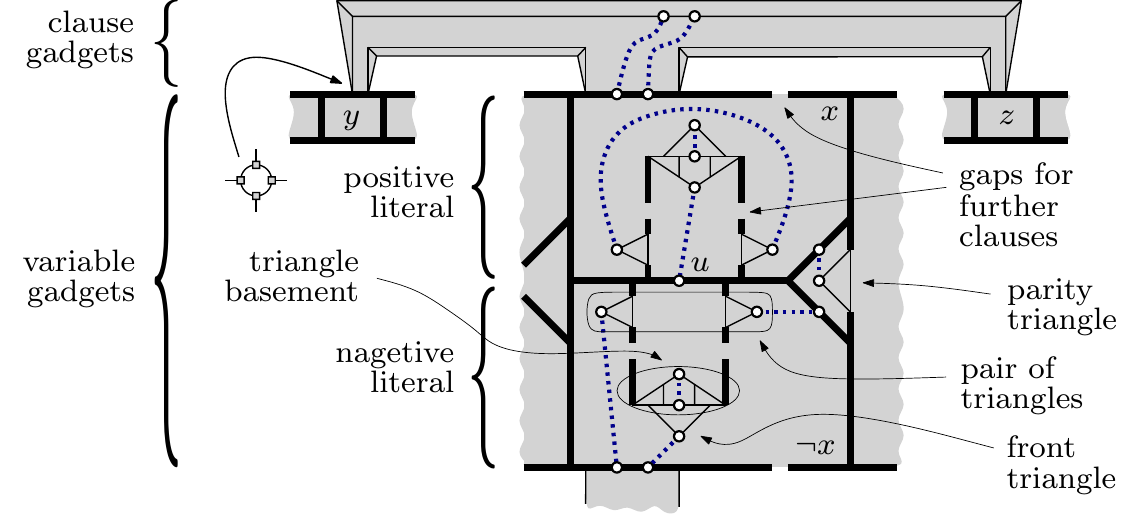}
    \caption{Variable gadget for variable $x$ and clause gadget for 
       clause $(y \vee x \vee z)$ in graph~$G_\varphi$.  The
      augmentation (dotted edges) corresponds to the assignment
      $x=$\emph{ture}, $\neg x=$\emph{false}.}
    \label{fig:tricongadget}
  \end{figure}

   We start with (P\ref{prop:allVal}). Consider the exemplary variable
   gadget in Fig.~\ref{fig:tricongadget}.  The valency of $u$ is
   incident to both literal (sub)gadgets, and hence, is either
   assigned to $x$ or $\neg x$ by a valid augmentation. Without loss
   of generality, assume that $u$ is assigned to $x$.  The opposite
   case is symmetric.  The two degree-2 vertices in the triangle
   basement in $\neg x$ are thus connected since the inner face of
   the literal body provides no further valencies.
   Let $\ell$ denote the number of clauses containing $\neg x$. 
   The outer face of the literal body of $\neg x$ is incident to $2(\ell +1)$ valencies; 
   $\ell +1$ stem from the triangles at the literal body, $\ell$ are boundary 
   valencies and one additional valency is placed at the triangle to the right.
   We argue that the valencies at the triangles of the literal body are not connected to each other 
   by a valid augmentation.
   This is true since such an edge would immediately induce a subgraph 
   that is separated from the rest by only two 
   vertices; namely the vertices where the connected triangles are attached to the literal body.
   Consequently, a valid augmentation must assign all $\ell$ boundary valencies of $\neg x$ to the literal face.
   The last valency, which is necessary due to the parity condition, is provided by the vertex at the thick triangle to the right.
   
   For the proof of (P\ref{prop:evenVal}) consider again
   Fig.~\ref{fig:tricongadget} and let (without loss of generality)
   $x$ denote the given literal.  The number of clauses containing $x$
   is $\ell$, $0 \leq s \leq \ell$ denotes the cardinality of the
   given subset of the clauses containing $x$.  In order to construct
   a valid augmentation $W$ of the variable gadget such that $W$ uses
   exactly $2(\ell-s)$ boundary valencies of $x$, we connect $u$ to a
   valency in $x$.  This induces an augmentation of $\neg x$ as
   described in the proof of (P\ref{prop:allVal}).  Note that this
   augmentation makes the triangle basement in $\neg x$ triconnected
   and all the triangles of the literal body are connected to vertices
   outside the literal body, which also makes the literal body
   triconnected.  In the literal gadget for $x$ the only vertex that
   can be connected to $u$ belongs to the triangle basement.  Hence,
   the two remaining degree-2 vertices at the front side of the
   literal body are also connected.
   Furthermore, we connect the valency at the parity triangle to the only possible vertex at the opposite thick edge, 
   which makes the parity triangle triconnected.
   Finally, we choose the $s$ upper pairs of triangles at the literal body and connect each by an edge.
   In contrast to the proof of (P\ref{prop:allVal}) connecting opposite triangles at the literal body is feasible,
   since the new edge incident to $u$ ensures triconnectivity.
   The remaining $2(\ell - s)$ valencies at the literal body 
   can be obviously connected in a planar way to the $2(\ell - s)$ boundary 
   valencies that are not incident to the given clauses, which finally 
   ensures the triconnectivity of the whole augmented variable gadget.
   
   With the help of (P\ref{prop:allVal}) and (P\ref{prop:evenVal}) it
   is now easy to show that if $G_\varphi$ admits a valid augmentation
   then $\varphi$ is satisfiable.  Assume that~$W$ is a valid
   augmentation. Then $W$ connects the two degree-2 vertices of each
   clause to two boundary valencies of literal gadgets since
   connecting those degree~2 to each other would yield a parallel
   edge.
   This \emph{selects} a set of literal gadgets in that sense that a
   gadget is selected if at least one of its boundary valencies is
   assigned to a clause face.  According to (P\ref{prop:allVal}) the
   boundary valencies of the negated literal gadget of a selected
   gadget are all assigned to the literal face, and hence, a literal
   and its negation are never selected at the same time. Thus, the
   literal selection induces a truth assignment of the variables,
   which satisfies $\varphi$ since each clause selects at least one
   (true) literal.
   
   Conversely, we need to show that if $\varphi$ is satisfiable then
   $G_\varphi$ admits a valid augmentation.  Assume we have a
   satisfying truth assignment for $\varphi$. For each clause, we
   choose exactly one true literal~$L$ and connect the two degree-2
   vertices of the clause to the two boundary valencies of $L$ that
   are incident to the clause gadget.  
   This ensures triconnectivity at the former degree-2 vertices of the clause 
   and the former degree-2 vertices providing the boundary valencies, and thus,
   yields a valid augmentation of the clause gadgets. Recall that $G'_\varphi$ is already triconnected. 
  With the help of (P\ref{prop:evenVal}) this can be finally extended to a valid augmentation of $G_\varphi$. 
\end{proof}

\section{Proof of Completeness of \planaug}
\label{sec:hardness}
\rephrase{Theorem}{\ref{thm:npc}}{\theoremNpc}

\begin{proof}
  \planaug is in~$NP$ since given a planar graph~$G$ we can guess a
  set~$W \subseteq \binom{V}{2}$ of non-edges of~$G$ and then test
  efficiently whether~$G+W$ is 3-regular and planar.  We prove
  NP-hardness by reducing from the problem \psat, which is known to be
  NP-hard~\cite{l-pfu-82}.  The reduction is inspired by and indeed
  very similar to a reduction of Rutter and Wolff~\cite{rw-acpgg-12},
  showing that it is NP-hard to find a smallest edge set that augments
  a given graph to be 2-edge connected and planar.


  An instance of \psat is a \sat formula~$\varphi$ whose
  variable--clause graph is planar.  Such a graph can be laid out (in
  polynomial time) such that the variables correspond to pairwise
  axis-aligned rectangles on the x-axis and clauses correspond to
  non-crossing three-legged ``combs'' above or below the
  x-axis~\cite{kr-pcr-92}; see Fig.~\ref{fig:sat-layout}.  We now
  construct a biconnected planar graph~$G_\varphi$ that admits a
  planar 3-regular augmentation if and only if~$\varphi$ has a
  satisfying truth assignment.

  \begin{figure}[tb]
    \centering
    \includegraphics{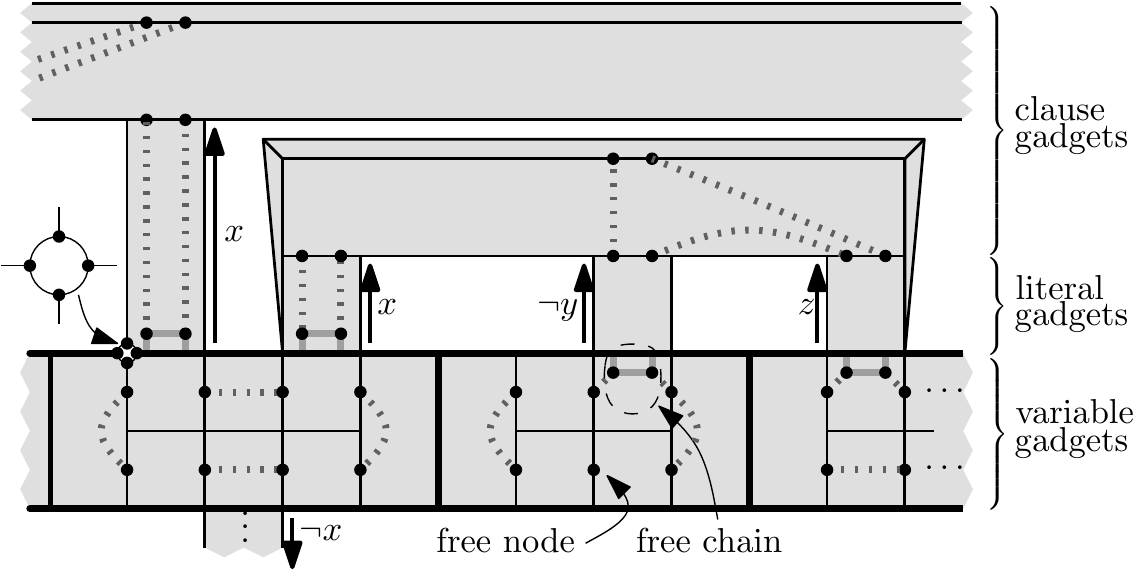}
    \caption{Part of the graph~$G_\varphi$ for a SAT formula~$\varphi$
      that contains the clause $(x \vee \neg y \vee z)$.  The
      augmentation (dotted edges) corresponds to the assignment
      $x=y=$\emph{false} and~$z=$\emph{true}.}
    \label{fig:gadget}
  \end{figure}

  The graph~$G_\varphi$ again consists of \emph{gadgets}, which are subgaphs that represent the variables, literals, and clauses
  of~$\varphi$; see Fig.~\ref{fig:gadget}.  For each gadget, we will
  argue that there are only a few ways to embed and augment it to be
  3-regular and planar.  Note that our construction connects variable
  gadgets corresponding to neighboring variables in the layout of the
  variable--clause graph of~$\varphi$.  Hence~$G_\varphi$ is always
  connected.  Additionally, we identify the left boundary of the
  leftmost variable gadget with the right boundary of the rightmost
  variable gadget.  In the figure vertices with degree less than~3 are
  highlighted by small black disks.  All bends and junctions of line
  segments represent vertices of degree at least~3.  Vertices of degree
  greater than~3 are actually modeled by small cycles of vertices of
  degree~3, as indicated in the left of Fig.~\ref{fig:gadget}.  The
  (black and dark gray) solid line segments between adjacent vertices
  represent the edges of~$G_\varphi$; the thick dotted line segments
  represent non-edges of~$G_\varphi$ that are candidates for an
  augmentation of~$G_\varphi$.  The set of solid black edges forms a
  subgraph of~$G_\varphi$ that we call the \emph{frame}.  The dark
  gray solid edges form \emph{free chains}, which connect two degree-2
  vertices to the frame.  Consider the graph~$G_\varphi'$ that is
  obtained from the frame by contracting all vertices of degree~2
  and all cycles that are used to model vertices of degree greater
  than~3.  The graph~$G_\varphi'$ coincides with the one used by
  Rutter and Wolff in their reduction~\cite{rw-acpgg-12}, and they show
  that it is 3-connected, and thus has a unique planar
  embedding~\cite{w-cgcg-32}.

  Since subdividing edges and replacing a vertex of degree at least~4 by
  a cycle preserves 3-connectedness, the frame has a unique embedding
  as well.  In other words, the embedding of~$G_\varphi$ is fixed up
  to embedding the free chains, which may be embedded in two distinct
  faces, each.

  A 3-regular planar augmentation of~$G_\varphi$ yields an embedding
  of~$G_\varphi$ and an assignment of the degree-2 vertices
  of~$G_\varphi$ to incident faces (a vertex~$v$ is assigned to the
  face~$f$ if in the planar embedding of~$G+W$ the edge of~$W$
  incident to~$v$ is embedded in the (former) face $f$) such that
  \begin{enumerate}[(P1)]
  \itemsep=0ex
  \item each face is assigned an even number of vertices,\label{prop:even}
  \item each face that is assigned two adjacent vertices is assigned
    at least four vertices. \label{prop:chain}
  \end{enumerate}
  We call such an assignment of degree-2 vertices to faces
  \emph{valid}.  Conversely, it is readily seen that given a valid
  assignment, a planar 3-regular augmentation can always be
  constructed.  We thus need to show that if~$G_\varphi$ admits an
  embedding with a valid assignment, then~$\varphi$ is satisfiable.

  Our variable gadget consists of two rows of square faces where the
  horizontal edge between the two leftmost faces and the horizontal edge
  between the two rightmost faces is missing.  Effectively, the inner
  faces of a variable box form a cycle.  Starting from the leftmost
  (rectangular) face, we call the faces \emph{odd} and \emph{even}.
  Each interior vertical edge is subdivided by a degree-2 vertex.  Due
  to property~(P\ref{prop:even}), these subdivision vertices must either
  all be assigned to the odd faces or all to the even faces of the
  variable.  If the vertices are assigned to the even faces, then the
  corresponding variable is true, and vice versa.

  A literal gadget consists of a square face that lies immediately
  above or below the variable gadget.  A positive literal (such as the
  one labeled with~$x$ in Fig.~\ref{fig:gadget}) is attached to an
  even face, a negated literal (such as the one labeled with~$\neg y$
  in Fig.~\ref{fig:gadget}) is attached to an odd face.  A literal
  gadget contains two adjacent subdivision vertices at the
  edge it shares with the clause gadget, and
  a free chain containing two adjacent vertices of
  degree~2.  The latter is attached to the boundary shared by the literal gadget with
  the variable gadget.  Due to
  property~(P\ref{prop:chain}) the free chain must either be embedded
  inside the literal gadget and all incident degree-2 vertices are
  assigned to the face of the literal gadget, or the chain is embedded inside
  the attached variable gadget and the two subdivision vertices are assigned
  to the adjacent clause gadget.  Again due to
  property~(P\ref{prop:chain}) the free chain must be embedded inside
  the literal gadget if no vertices are assigned to the adjacent face
  of the variable gadget.  In this case the literal has the value
  \emph{false}.  If two vertices are assigned to the adjacent face of
  the variable gadget, the free chain can (but does not have to) be
  embedded inside the variable and the two subdivision vertices are
  assigned to the clause.

  Finally, each clause gadget consists of a single rectangular face
  that contains two adjacent subdivision vertices.  If~$G_\varphi$
  admits an embedding with a valid assignment, then, due to
  property~(P\ref{prop:chain}), at least two other degree-2 vertices are
  assigned to the clause gadget face.  This means that for each clause
  gadget, the two subdivision vertices of at least one literal are
  assigned to the clause gadget.  In other words, at least one of the
  literals that make up the clause is \emph{true}.  Hence,~$\varphi$
  has a satisfying truth assignment.

  Conversely, it is easy to see that if~$\varphi$ has a satisfying
  truth assignment, then an embedding with a corresponding assignment
  can be found.  We use a constant number of vertices and edges for
  each literal and clause gadget, thus our reduction---including the
  computation of the embedding of the variable--clause graphs---is
  polynomial.  Moreover, since the graph~$G_\varphi$ is obtained from
  a 3-connected graph by subdividing edges and adding some paths
  between existing endpoints, the graph~$G_\varphi$ is biconnected.
\end{proof}

\section{Conclusion}
\label{sec:conclusion}
In this paper we have given efficient algorithms for deciding whether a given planar
graph with a fixed embedding admits a 3-regular planar augmentation.
We note that the running time of $O(n^{2.5})$ is due to the potentially 
quadratic size of our assignment graphs. Recently, we succeeded in 
constructing equivalent assignment graphs with only $O(n)$ edges. 
This immediately improves the running time of all our algorithms to $O(n^{1.5})$.

 \bibliographystyle{plain}
 \bibliography{cubicPlanarAug_arXiv}

\end{document}